\numberwithin{equation}{section}
\numberwithin{figure}{section}
\theoremstyle{plain}
\newtheorem{thm}{\protect\theoremname}[section]
  \theoremstyle{plain}
  \newtheorem{cor}[thm]{\protect\corollaryname}
  \theoremstyle{definition}
  \newtheorem{example}[thm]{\protect\examplename}
  \theoremstyle{plain}
  \newtheorem{prop}[thm]{\protect\propositionname}
  \theoremstyle{remark}
  \newtheorem{rem}[thm]{\protect\remarkname}
  \theoremstyle{plain}
  \newtheorem{lem}[thm]{\protect\lemmaname}
  \theoremstyle{definition}
  \newtheorem{defn}[thm]{\protect\definitionname}
  \theoremstyle{plain}
  \newtheorem{conjecture}[thm]{\protect\conjecturename}
\def\makebbb#1{
    \expandafter\gdef\csname#1\endcsname{
        \ensuremath{\Bbb{#1}}}
}\makebbb{R}\makebbb{N}\makebbb{Z}\makebbb{C}\makebbb{H}\makebbb{E}\makebbb{H}\makebbb{P}\makebbb{B}\makebbb{Q}\makebbb{E}
  \providecommand{\conjecturename}{Conjecture}
  \providecommand{\corollaryname}{Corollary}
  \providecommand{\definitionname}{Definition}
  \providecommand{\examplename}{Example}
  \providecommand{\lemmaname}{Lemma}
  \providecommand{\propositionname}{Proposition}
  \providecommand{\remarkname}{Remark}
\providecommand{\theoremname}{Theorem}
\begin{document}

\title{Kähler-Einstein metrics, canonical random point processes and birational
geometry}
\begin{abstract}
In the present paper and the companion paper \cite{berm8} a probabilistic
(statistical mechanical) approach to the study of canonical metrics
and measures on a complex algebraic variety $X$ is introduced. On
any such variety with positive Kodaira dimension a canonical (birationally
invariant) random point processes is defined and shown to converge
in probability towards a canonical deterministic measure $\mu_{X}$
on $X,$ coinciding with the canonical measure of Song-Tian and Tsuji.
The proof is based on new large deviation principle for Gibbs measures
with singular Hamiltonians which relies on an asymptotic submean inequality
in large dimensions, proved in a companion paper. In the case of a
variety $X$ of general type we obtain as a corollary that the (possibly
singular) Kähler-Einstein metric on $X$ with negative Ricci curvature
is the limit of a canonical sequence of quasi-explicit Bergman type
metrics. In the opposite setting of a Fano variety $X$ we relate
the canonical point processes to a new notion of stability, that we
call Gibbs stability, which admits a natural algebro-geometric formulation
and which we conjecture is equivalent to the existence of a Kähler-Einstein
metric on $X$ and hence to K-stability as in the Yau-Tian-Donaldson
conjecture. 
\end{abstract}

\author{Robert J. Berman}

\maketitle
\tableofcontents{}

\section{Introduction}

Kähler-Einstein metrics, i.e. Kähler metrics with constant Ricci curvature,
play a prominent role in the study of complex algebraic varieties.
When such a variety $X$ admits a Kähler-Einstein metric it is essentially
unique, i.e. canonically attached to $X$ and can thus be used to
probe the space $X$ using differential-geometric tools (e.g. as in
Yau's proof of the Miyaoka-Yau Chern number inequalities). Singular
versions of Kähler-Einstein are also naturally linked to Mori's Minimal
Model Program (MMP) for complex algebraic varieties, notably through
the Kähler-Ricci flow \cite{s-t,s-t0,ts}. In the case when the variety
$X$ is defined over the integers, i.e. $X$ is cut out by polynomials
with integer coefficients, it is also expected that Kähler-Einstein
metrics carry arithmetic information (even if there are very few direct
results in this direction). For example, the role of Kähler-Einstein
metrics in arithmetic (Arakelov) geometry was speculated on by Manin
\cite{man}, as playing the role of minimal models over the prime/place
at infinity. 

In the present paper and the companion paper \cite{berm8} a new\emph{
}probabilistic\emph{ }(statistical-mechanical) approach to the study
of Kähler-Einstein type metrics is introduced, where the metrics appear
in the large $N-$limit of of certain\emph{ }canonical random point
processes on $X$ with $N$ particles. The canonical point processes
in question are directly defined in terms of algebro-geometric data
and the thrust of this approach is thus that it gives a new direct
link between \emph{algebraic geometry }on one hand and\emph{ complex
differential (Kähler) geometry} on the other. In the companion paper
\cite{berm8} a general convergence result for stochastic interacting
particle systems in thermal equilibrium is established and applied
in the setting when the canonical line bundle $K_{X}$ is positive/ample
to produce the unique Kähler-Einstein metric on $X.$ In the present
paper the results are extended to any variety $X$ of positive Kodaira
dimension, using the global pluripotential theory and variational
calculus in \cite{begz,bbgz,berm6} and some algebraic geometry (in
particular the Fujino-Mori canonical bundle formula \cite{f-m}).
While our main results are centered around the case of negatively
curved Kähler-Einstein metrics and various singular and twisted generalizations
of such metrics, we also formulate a conjectural picture relating
the existence problem for Kähler-Einstein metrics with positive Ricci
curvature on a Fano manifold $X$ to a statistical mechanical notion
of stability that we call \emph{Gibbs stability.} The latter notion
of stability thus replaces the notion of K-stability which appears
in the seminal Yau-Tian-Donaldson conjecture for Kähler-Einstein metrics
on Fano manifolds. Interestingly, the notion of Gibbs stability also
admits a natural purely algebro-geometric formulation in terms of
the standard singularity notions in the MMP. 

The connections to physics (in particular emergent gravity and fermion-boson
correspondences) were emphasized in \cite{berm5}, where a heuristic
argument for the convergence of the point processes was outlined.
In the present paper and its companion \cite{berm8} we provide rigorous
proofs - in a more general setting - building on \cite{b-b,b-b-w,bbgz,berm6}.
The key new technical feature is a mean value inequality for quasi-subharmonic
functions on Riemannian quotients in large dimensions with a distortion
coefficient which is sub-exponential in the dimension proved in \cite{berm8}.

Before turning to precise statements of the main results it may also
be worth pointing out that there are also numerical motivations for
the current approach. Indeed, there are very few examples of Kähler-Einstein
metrics that can be written down explicitly and one virtue of the
present framework is that it offers an, essentially, explicit way
of numerically sampling random points in order to approximate the
Kähler-Einstein metric. It would thus be interesting to compare it
with other recently proposed numerical schemes \cite{do3,d-h-h-k}.

\subsection{Kähler-Einstein geometry and the canonical point processes}

Let $X$ be an $n-$dimensional compact complex manifold. A Kähler
metric $\omega$ on $X$ is said to be \emph{Kähler-Einstein} if its
Ricci curvature is constant: 
\begin{equation}
\mbox{Ric}\ensuremath{\omega_{KE}=-\beta\omega_{KE}}\label{eq:k-e eq}
\end{equation}
where, after normalization one may assume that $\beta=\pm1$ or $\beta=0.$
Since the Ricci form of a Kähler metric represents minus the first
Chern class $c_{1}(K_{X})$ of the canonical line bundle $K_{X}:=\Lambda^{n}(TX)$
of $X$ the Kähler-Einstein equation imposes cohomological conditions
saying that $c_{1}(K_{X})$ vanishes when $\beta=0$ and has a definite
sign when $\beta=\pm1.$ In the latter case, which is the one we shall
mainly focus on, this means in algebro-geometric terms that $\pm K_{X}$
is ample (using additive notation for tensor products, so that $-K_{X}$
denotes the dual of $K_{X})$ and in particular $X$ is a projective
algebraic variety. As is well-known there are also singular versions
of Kähler-Einstein metrics obtained by either relaxing the positivity
(or negativity) condition on $K_{X}$ or introducing a log structure
on $X,$ i.e. a suitable divisor $D$ on $X$ (see below). We recall
that these notions appear naturally in the Minimal Model Program,
which aims at attaching a minimal model to a given algebraic variety
(say with positive Kodaira dimension), i.e. a birational model whose
canonical line bundle is nef (which is the numerical version of semi-positivity)
\cite{bi}. Recently, there has also been a rapid development of the
theory of Kähler-Einstein metrics attached to a log pair $(X,D)$
(see for example \cite{do-3,jmr,cgh,g-p}).

\subsubsection{Varieties of general type ($\beta=1)$}

Let us start with the case when $\beta=1,$ i.e. the case when $K_{X}$
is positive or more generally big (i.e. $X$ is a variety with general
type). We will show how to recover the unique (singular) Kähler-Einstein
metric on such a manifold $X$ from the large $N$ limit of certain\emph{
canonical} random point processes on $X$ with $N$ particles, defined
as follows. First define the following sequence of positive integers:
\[
N_{k}:=\dim H^{0}(X,kK_{X}),
\]
 where $H^{0}(X,kK_{X})$ is the space of all pluricanonical (holomorphic)
$n-$forms of $X,$ at level $k$ (recall that we are using additive
notation for tensor products). In other words, $N_{k}$ is the $k$
th plurigenus of $X$ and we recall that $X$ is said to be of\emph{
general type} if $N_{k}$ is of the order $k^{n}$ for $k$ large.
In particular, this is the case if $K_{X}$ is ample. The starting
point of the present probabilistic approach is the observation that
there is a canonical probability measure $\mu^{(N_{k})}$ on the $N_{k}-$fold
product $X^{N_{k}}$ which may be defined as follows, in terms of
local holomorphic coordinates $z$ on $X:$ 
\begin{equation}
\mu^{(N_{k})}=:=\frac{1}{Z_{N_{k}}}\left|(\det S^{(k)})(z_{1},...,z_{N_{k}})\right|^{2/k}dz_{1}\wedge d\bar{z}_{1}\wedge\cdots\wedge dz_{N_{k}}\wedge d\bar{z}_{N_{k}}\label{eq:canon prob measure intro}
\end{equation}
 where $dz\wedge d\bar{z}$ is a short hand for the local Euclidean
volume form on $X$ determined by the local coordinates $z$ and $\det S^{(k)}$
is a holomorphic section of the line bundle $(kK_{X})^{\boxtimes N_{k}}\rightarrow X^{N_{k}},$
defined by a generator of the determinantal line $\Lambda^{N_{k}}(H^{0}(X,kK_{X})$
(and thus defined up to a multiplicative complex number). Concretely,
we may take $\det S^{(k)}$ as the following Vandermonde type determinant
\[
(\det S^{(k)})(z_{1},z_{2},...,z_{N}):=\det(s_{i}^{(k)}(z_{j})),
\]
 in terms of a given basis $s_{i}^{(k)}$ in $H^{0}(X,kK_{X}),$ which
locally may be identified with a holomorphic function on $X^{N_{k}}.$
The point is that, by the very definition of the canonical line bundle
$K_{X},$ the local function $\left|(\det S^{(k)})(z_{1},...,z_{N_{k}})\right|^{2/k}$
transforms as a density on $X^{N_{k}}$ and after normalization by
its total integral $Z_{N_{k}}$ one obtains a sequence of globally
well-defined probability measures $\mu^{(N_{k})}$ on $X^{N_{k}}$
which are independent of the choice of basis $s_{i}^{(k)}$ in $H^{0}(X,kK_{X})$
and thus canonically attached to $X.$ From a statistical mechanical
point of view the normalizing constant 
\begin{equation}
Z_{N_{k}}:=\int_{X^{N_{k}}}\left|(\det S^{(k)})(z_{1},...,z_{N_{k}})\right|^{2/k}dz_{1}\wedge d\bar{z}_{1}\wedge\cdots\wedge dz_{N_{k}}\wedge d\bar{z}_{N_{k}}\label{eq:def of partition function for kx intro}
\end{equation}
is the\emph{ partition function }and it depends on the choice of generator
(but the point is that $\mu^{(N_{k})}$ does not).

By construction the probability measure $\mu^{(N_{k})}$ is symmetric,
i.e. invariant under the natural action of the permutation group $\Sigma_{N_{k}}$
and hence defines a \emph{random point process on $X$ with $N_{k}$
particles} (i.e. a probability measure on the space of all configurations
of $N_{k}$ points on $X).$ To simplify the notation we will often
omit the subscript $k$ on $N_{k}.$ This should cause no confusion,
since $k$ tends to infinity precisely when $N_{k}$ does. We recall
that the \emph{empirical measure} of a random point process with $N$
particles on a space $X$ is the random measure 
\begin{equation}
\delta_{N}:=\frac{1}{N}\sum_{i=1}^{N}\delta_{x_{i}}\label{eq:empirical measure intro}
\end{equation}
on the probability space $(X^{N},\mu^{(N)})$ which defines a map
from $X^{N}$ to the space $\mathcal{M}_{1}(X)$ of all normalized
measures $\mu$ on $X.$ By definition the\emph{ law} of $\delta_{N}$
is the push-forward of $\mu^{(N)}$ to $\mathcal{M}_{1}(X)$ under
the map $\delta_{N},$ which thus defines a probability measure on
the space $\mathcal{M}_{1}(X).$ 
\begin{thm}
\label{thm:thm gen type intro}Let $X$ be a variety of general type.
Then the empirical measures of the canonical random point processes
on $X$ converge in probability towards the normalized volume form
$dV_{KE}$ of the Kähler-Einstein metric on $X.$ More precisely,
the laws of the empirical measures satisfy a large deviation principle
with speed $N$ and rate functional $F(\mu),$ where $F(\omega^{n}/V)$
is Mabuchi's K-energy of the Kähler form $\omega$ (normalized so
that $F$ vanishes on $dV_{KE})$ 
\end{thm}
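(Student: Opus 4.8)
The plan is to establish the large deviation principle (LDP) for the empirical measures $\delta_N$ and then read off convergence in probability from it, since the rate functional $F$ will be shown to have a unique minimizer at $dV_{KE}$. The starting observation is that the density of the canonical measure $\mu^{(N)}$ can be rewritten, for any fixed reference metric $\phi_0$ on $K_X$ with positive curvature form $\omega_0$ and associated volume form $dV_0$, as
\begin{equation}
\mu^{(N)} = \frac{1}{Z_N}\, e^{-N E_k(\delta_N)}\, dV_0^{\otimes N},
\end{equation}
where $E_k$ is (up to lower-order terms and the fixed normalization by $dV_0$) a quantized Dirichlet-type energy built from the Bergman kernel at level $k$: concretely $E_k(x_1,\dots,x_N)$ is $-\tfrac{1}{kN}\log|\det S^{(k)}|^2_{\phi_0^{\otimes}} $ plus the entropy contribution coming from comparing $|(\det S^{(k)})|^{2/k}\,dz\wedge d\bar z$ with $dV_0^{\otimes N}$. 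The first key step is thus a purely pluripotential-theoretic convergence result: as $k\to\infty$, the functionals $E_k$ converge (in the sense of $\Gamma$-convergence, or epi-convergence, on $\mathcal{M}_1(X)$) to the functional $\mu\mapsto E(\mu)$ given by a Legendre-type/energy expression whose sum with the entropy term $\mathrm{Ent}(\mu\mid dV_0)$ reproduces, after the Monge–Ampère change of variables $\mu=\mathrm{MA}(\phi)/V$, the Mabuchi K-energy $F$. This is exactly the type of statement proved in the weighted/Fano setting in \cite{bbgz,berm6}, and the job here is to adapt those arguments to the canonical (adjoint) setting where the reference volume form is itself canonically $e^{-\phi}$ on $K_X$.

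Granting that convergence of energies, the LDP follows from a by-now standard mechanism, which I would run as follows. First, a Gärtner–Ellis / Laplace-principle argument: one shows $\lim_{N\to\infty}\tfrac1N\log Z_N = -\inf_{\mu}\bigl(E(\mu)+\mathrm{Ent}(\mu\mid dV_0)\bigr)$, with the upper bound coming from the submean-value inequality for quasi-subharmonic functions on the large-dimensional Riemannian quotient $X^N/\Sigma_N$ — this is the refinement of the Li–Schoen inequality announced in the introduction, whose role is precisely to control the contribution of configurations where $\delta_N$ is far from its typical value with a distortion constant that is subexponential in $N=N_k$ (so that it does not swamp the speed-$N$ rate). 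The lower bound is softer, obtained by restricting to product test measures and using the continuity/density properties of the energy. Combining the upper and lower bounds with the $\Gamma$-convergence $E_k\to E$ and lower semicontinuity of the entropy gives the full LDP for the laws of $\delta_N$ with speed $N$ and rate functional $\mu\mapsto E(\mu)+\mathrm{Ent}(\mu\mid dV_0)-C$, normalized by the constant $C=\inf(E+\mathrm{Ent})$ so that it is nonnegative; by the Legendre-duality computation this rate functional is identified with Mabuchi's K-energy $F(\mu)$, normalized to vanish at its minimizer.

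The final step is to identify the minimizer and deduce convergence in probability. By the variational characterization of singular Kähler–Einstein metrics of negative Ricci curvature (the Aubin–Yau-type theorem, in the degenerate/big setting as in \cite{bbgz}), the K-energy $F$ on a variety of general type is bounded below, proper, and attains its infimum at a unique measure, namely $dV_{KE}=\mathrm{MA}(\phi_{KE})/V$ where $\phi_{KE}$ solves $\mathrm{MA}(\phi)=e^{\phi}\,(\text{smooth part})$; uniqueness here is exactly convexity of $F$ along geodesics in the space of Kähler potentials (or, more robustly, along the relevant affine structure in the big case). Since an LDP with a good rate functional having a unique zero forces the underlying random variables to concentrate at that zero, $\delta_N\to dV_{KE}$ in probability, which is the assertion of the theorem. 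The main obstacle, and the genuinely new technical content, is the mean-value inequality on $X^N/\Sigma_N$ with subexponential distortion: the naive Li–Schoen constant grows too fast in the dimension $2N$ to be usable at speed $N$, so one must exploit the product structure and the fact that the curvature of the quotient is controlled to obtain the sharpened bound; everything else is an adaptation of the pluripotential/large-deviation machinery already developed for line bundles to the adjoint bundle $K_X$ (equivalently, to the case where the "weight" is the solution one is trying to find, which is what makes the fixed-point/entropy bookkeeping slightly more delicate than in the twisted case).
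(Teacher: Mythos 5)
Your overall architecture (LDP at speed $N$, reduce to convergence of $\frac1N\log Z_N$, identify the rate functional as Mabuchi's K-energy via Legendre duality, then use the variational uniqueness of the singular K\"ahler--Einstein metric to read off convergence in probability) is the right one and matches the paper's, but you have the roles of the two bounds reversed, and your proposed argument for one of them does not close. The submean inequality on $X^N/\Sigma_N$ is needed for the \emph{lower} bound on $\frac1N\log Z_N$ (equivalently, the LDP lower bound for open sets): it is what guarantees that the integral $\int e^{-H^{(N)}}\mu_0^{\otimes N}$ does \emph{not} decay too fast even though the density vanishes along the divisor $\{\det S^{(k)}=0\}$, by showing a small ball centered at a near-optimal (Fekete-type) configuration $x_*^{(N)}$ contributes at least $e^{-H^{(N)}(x_*^{(N)})}\cdot\mathrm{vol}(\text{ball})$ up to a sub-exponential distortion; the role is to keep the ``good'' configurations from being wiped out, not to control the far-away ones. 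The upper bound on $\frac1N\log Z_N$ is the soft direction, obtained from the Gibbs variational principle together with subadditivity of the entropy and the Legendre characterization of the energy --- no submean inequality is needed there.

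Your proposed route for the lower bound, ``restricting to product test measures $\mu^{\otimes N}$ and using continuity/density of the energy,'' does not go through. The Gibbs variational principle indeed gives $-\frac1N\log Z_N\le E^{(N)}(\mu^{\otimes N})+D_{\mu_0}(\mu)$ for every $\mu$, but to conclude you would need $\limsup_N E^{(N)}(\mu^{\otimes N})\le E_\theta(\mu)$, and the paper only establishes the opposite inequality $\liminf_N E^{(N)}(\mu^{\otimes N})\ge E_\theta(\mu)$; the full equality $\lim_N E^{(N)}(\mu^{\otimes N})=E_\theta(\mu)$ is explicitly left open (it is formula \eqref{eq:conj for energy} and would, among other things, give an algorithm for Yau's Calabi--Yau solutions). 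The paper circumvents this by \emph{not} integrating $H^{(N)}$ against product measures: instead it picks the minimizers of the $u$-perturbed Hamiltonian, whose empirical measures converge to $MA(P_\theta u)$ by Gateaux differentiability of $\mathcal{E}_\theta\circ P_\theta$ (Theorems A and B of \cite{b-b}, via Lemma \ref{lem:conv of abstr fekete}), controls the integral near those points via the submean inequality, and then uses Sanov's theorem for the volume factor together with a regularization step to pass to general finite-energy measures. So what is missing in your plan is precisely this Fekete/differentiability mechanism for the lower bound; the ``$\Gamma$-convergence of $E_k$'' you invoke would, once made precise, essentially be equivalent to the missing direction of \eqref{eq:conj for energy} and is stronger than what is actually proved.
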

We recall that, loosely speaking, the large deviation principle referred
to in the previous theorem says that the convergence in probability
is exponential in the following sense: 
\[
\mbox{Prob }\left(\frac{1}{N_{k}}\sum_{i}\delta_{x_{i}}\in B_{\epsilon}(\mu)\right)\sim e^{-NF(\mu)}
\]
 as first $N\rightarrow\infty$ and then $\epsilon\rightarrow0,$
where $B_{\epsilon}(\mu)$ denotes a ball of radius $\epsilon$ centered
at a given probability measure $\mu$ in the space $\mathcal{M}_{1}(X)$
(equipped with a metric compatible with the weak topology). In other
words this means that the probability of finding a cloud of $N$ points
$x_{1},...,x_{N}$ on $X$ such that the corresponding measure $\frac{1}{N_{k}}\sum_{i}\delta_{x_{i}}$
approximates a volume form $\mu$ is exponentially small unless $\mu$
is a minimizer of $F.$
\begin{cor}
Let $X$ be a variety of general type. Then the one-point correlation
measures $\nu_{k}:=\int_{X^{N-1}}\mu^{(N_{k})}$ of the canonical
point processes define a sequence of canonical measures on $X$ converging
weakly to $dV_{KE}$. Moreover, the curvature forms of the corresponding
metrics on $K_{X}$ defined by the sequence $\nu_{k}$ converge weakly
to the unique Kähler-Einstein metric $\omega_{KE}$ on $X.$
\end{cor}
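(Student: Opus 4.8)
The plan is to deduce both assertions from the large deviation principle of the preceding Theorem. For the first assertion, the key point is that $\nu_k$ is exactly the first moment of the empirical measure: by the $\Sigma_{N_k}$-symmetry of $\mu^{(N_k)}$ one has, for every $f\in C^0(X)$,
\[
\int_X f\,\nu_k \;=\; \int_{X^{N_k}} f(x_1)\,\mu^{(N_k)} \;=\; \mathbb{E}_{\mu^{(N_k)}}\bigl[\langle\delta_{N_k},f\rangle\bigr].
\]
The rate functional $F$ of the Theorem is lower semicontinuous on the compact metric space $\mathcal{M}_1(X)$, hence a good rate functional, and it attains its minimum value $0$ only at $dV_{KE}$; this uniqueness is the classical fact that the normalized K-energy of a variety of general type is minimized precisely at the (possibly singular) K\"ahler--Einstein volume form, by Aubin--Yau and its extensions to the big case. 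A large deviation principle with a good rate functional having a unique zero forces the empirical measures $\delta_{N_k}$ to converge in probability (for the weak topology) to $dV_{KE}$; since $|\langle\delta_{N_k},f\rangle|\le\|f\|_{\infty}$, this convergence in probability upgrades to convergence of expectations, and hence, by the displayed identity, $\langle\nu_k,f\rangle\to\langle dV_{KE},f\rangle$ for every $f\in C^0(X)$, i.e. $\nu_k\to dV_{KE}$ weakly.

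For the second assertion, recall that a positive measure $\nu$ on $X$ with local density $\rho$ (with respect to the Euclidean volume form of local coordinates $z$) defines a Hermitian metric on $K_X$ with local weight $\log\rho$, whose curvature form is $dd^c\log\rho=-\mathrm{Ric}(\nu)$, and that for $\nu=dV_{KE}$ this curvature form equals $\omega_{KE}$. Fix the volume form $dV_0$ of a smooth reference metric on $K_X$ and write $\nu_k=e^{\phi_k}\,dV_0$, $dV_{KE}=e^{\phi_\infty}\,dV_0$ for global functions $\phi_k,\phi_\infty$ on $X$; then the curvature forms in question are $\theta_0+dd^c\phi_k$ and $\theta_0+dd^c\phi_\infty=\omega_{KE}$, with $\theta_0$ the fixed curvature of the reference metric, and since $dd^c$ is continuous for the weak topology on currents it suffices to prove that $\phi_k\to\phi_\infty$ in $L^1(X,dV_0)$. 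For this the plan is: (i) prove a uniform upper bound $\phi_k\le C$, i.e. a uniform bound on the density of $\nu_k$; (ii) observe that $\log\rho_k$ is plurisubharmonic --- expanding the Vandermonde determinant $\det S^{(k)}$ along the first variable exhibits $\rho_k$, in a local trivialization and up to an additive constant, as the composition of the holomorphic map $z\mapsto\bigl(s_i^{(k)}(z)\bigr)_i\in\mathbb{C}^{N_k}$ with the function $w\mapsto\log\int_{X^{N_k-1}}\bigl|\sum_i w_i\,c_i\bigr|^{2/k}$, where the $c_i$ are the fixed cofactor sections, and this function is plurisubharmonic on $\mathbb{C}^{N_k}$, being the logarithm of an $L^{2/k}$-type integral of a linear family --- so the curvature form $\theta_0+dd^c\phi_k$ is semipositive and the $\phi_k$ are $\omega$-plurisubharmonic for a single fixed K\"ahler form $\omega$ depending only on $\theta_0$, with $\sup_X\phi_k$ bounded above by (i) and bounded below because $\nu_k$ has total mass $1$; (iii) conclude from the $L^1$-compactness of families of quasi-plurisubharmonic functions with uniformly bounded supremum that $\{\phi_k\}$ is relatively compact in $L^1(X)$, extract a subsequential limit $\phi_*$, use (i) and dominated convergence to get $e^{\phi_k}\to e^{\phi_*}$ in $L^1$ along a subsequence, so that $e^{\phi_*}\,dV_0$ is the weak limit of $\nu_k$ along it, and deduce from the first assertion that $\phi_*=\phi_\infty$; since every subsequence of $\{\phi_k\}$ has a further subsequence with this same $L^1$-limit, the full sequence converges, whence $\theta_0+dd^c\phi_k\to\omega_{KE}$ weakly.

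Everything above is soft except step (i): the first assertion, the reduction of the curvature statement to an $L^1$-statement about weights, the plurisubharmonicity in (ii), and the compactness-and-identification of (iii) are all routine once the uniform bound is available. The main obstacle --- and the real content here, beyond the Theorem itself --- is precisely that uniform estimate on the high-dimensional integrals $\frac{1}{Z_{N_k}}\int_{X^{N_k-1}}|\det S^{(k)}(z,\cdot)|^{2/k}$, uniform in both $z$ and $k$: weak convergence of the measures $\nu_k$ carries no information about $dd^c$ of the logarithms of their densities, so a genuine a priori bound is indispensable. This is where the paper's refined mean value inequality for quasi-subharmonic functions on Riemannian quotients in large dimensions (a sharpening of the submean inequality of Li--Schoen) is brought to bear: its distortion coefficient is sub-exponential in the dimension and is therefore negligible against the exponential growth rate of the partition function $Z_{N_k}$. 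A secondary technical point, characteristic of the general-type rather than the ample-$K_X$ case, is that $\rho_k$ vanishes along the base locus of $|kK_X|$, a proper subvariety; this causes no difficulty, since the $\phi_k$ are quasi-plurisubharmonic hence globally $L^1$, and this locus is pluripolar and both $dV_{KE}$-negligible and $\nu_k$-negligible.
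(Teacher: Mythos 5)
Your argument for the first assertion and the overall structure of your argument for the second are correct and essentially reproduce the paper's route (the first part is Corollary \ref{cor:conv of j point in general setting}; the second is Corollary \ref{cor:conv of canoc seq of currents klt}). However, you misdiagnose where the work lies in the second assertion. You present the uniform upper bound $\phi_k\le C$ on the density (your step (i)) as ``the main obstacle --- and the real content here, beyond the Theorem itself'', asserting that it must be extracted from the paper's refined submean inequality for quasi-subharmonic functions in large dimensions, and you never actually carry out that extraction. In fact the bound is soft and follows from ingredients you have already assembled. By your step (ii), $u_k:=\phi_k-\phi_0\in PSH(X,\theta_0)$; the normalization $\nu_k(X)=\int_X e^{u_k}\,dV=1$ gives $\tfrac1V\int_X u_k\,dV\le-\log V$ by Jensen's inequality; and the Hartogs-type compactness of $PSH(X,\theta_0)$ --- the very fact you invoke in (iii) --- bounds $\sup_X u_k-\tfrac1V\int_X u_k\,dV$ by a constant depending only on $(\theta_0,dV)$. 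Hence $\sup_X u_k\le C_0$ uniformly in $k$, with no appeal to the submean inequality. This is exactly how the paper's own proof proceeds. The refined submean inequality of Theorem \ref{thm:submean ineq text} is indeed the paper's key new technical tool, but its role is in establishing the large deviation principle itself, not in deducing this corollary from it: once the LDP is in hand, the corollary is entirely soft, as your own steps (ii)--(iii), with the Jensen observation slotted into (i), already demonstrate.
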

The last statement in the previous corollary concretely says that
the unique (possible singular) Kähler-Einstein metric $\omega_{KE}$
on $X$ (whose existence was established in the seminal works of Aubin
\cite{au} and Yau \cite{y} when $K_{X}$ is ample and in \cite{egz,begz,bbgz}
in general) may be recovered as the weak limit of the following sequence
of quasi-explicit canonical positive currents in $c_{1}(K_{X}):$

\begin{equation}
\omega_{k}:=\frac{i}{2\pi}\partial\bar{\partial}\log\int_{X^{N_{k}-1}}\left|(\det S^{(k)})(\cdot,z_{1},...,z_{N_{k}-1})\right|^{2/k}dz_{1}\wedge d\bar{z}_{1}\wedge\cdots\wedge dz_{N_{k}-1}\wedge d\bar{z}_{N_{k}-1}\label{eq:def of canonical sequ of current intro}
\end{equation}
which are smooth away from the base locus of $kK_{X}.$ It may be
worthwhile pointing out that it does not seem clear how to prove the
convergence of the one-point correlation measures without first proving
the full large deviation principle in the previous theorem, which
in general is a considerably stronger type of convergence result.

The convergence of the currents $\omega_{k}$ towards $\omega_{KE}$
is somewhat analogous to Donaldson's convergence of balanced metrics
towards the constant scalar curvature metric attached to a polarized
manifold $(X,L)$ with finite automorphism group \cite{do1}. Indeed,
in both situations the approximating metrics in question are singled
our by the sequence of finite dimensional spaces $H^{0}(X,kL).$ One
virtue of the present setting is that it applies when $L(=K_{X})$
is merely big thus providing what seems to be the first general approximation
result for the singular Kähler-Einstein metric $\omega_{KE}$ on a
variety $X$ of general type.

\subsubsection{Birational invariance and varieties of positive Kodaira dimension}

The results for varieties of general type above generalize to the
setting of a projective variety $X$ of positive Kodaira dimension
$\kappa.$ The starting point is the observation that the canonical
random point processes introduced above are well-defined as long as
the plurigenera $N_{k}$ are non-zero and that they are invariant
under birational equivalence of varieties. In order to do asymptotics
we also need $N_{k}$ to tend to infinity, which means that the natural
setting for the canonical point processes is a projective variety
$X$ of positive Kodaira dimension $\kappa$ (recall that $\kappa$
is the natural number defined by the growth property $N_{k}\sim k^{\kappa}).$ 
\begin{thm}
Let $X$ be a projective variety of positive Kodaira dimension. Then
the empirical measures of the canonical random point processes on
$X$ converge in probability towards a unique probability measure
$\mu_{X},$ which coincides with the canonical measure of Song-Tian
and Tsuji. 
\end{thm}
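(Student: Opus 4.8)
The plan is to reduce the statement to the case of a big line bundle on the base of the Iitaka fibration and then to transport the convergence along the fibres. By finite generation of the canonical ring (Birkar--Cascini--Hacon--McKernan) and the birational invariance of the canonical point processes noted above, we may replace $X$ by a smooth birational model on which the Iitaka fibration is a morphism $f\colon X\to Y$ with connected fibres, $\dim Y=\kappa$. The canonical bundle formula of Fujino--Mori then produces a big $\mathbb{Q}$-line bundle $L=K_Y+B+M$ on $Y$ (with $B$ the discriminant divisor and $M$ the semi-positive moduli part) together with an effective $\mathbb{Q}$-divisor $R$ on $X$, contracted by $f$ in codimension one, such that $K_X\sim_{\mathbb{Q}}f^{\ast}L+R$. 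Hence, for $k$ sufficiently divisible, multiplication by the fixed section cutting out $kR$ gives a canonical isomorphism $H^{0}(X,kK_X)\cong H^{0}(Y,kL)$, and one verifies that $\mu^{(N_k)}$ does not depend on this or any other choice.

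Under this identification the Vandermonde section $\det S^{(k)}$ on $X^{N_k}$ factors as $(f^{\ast}\det S_Y^{(k)})\cdot s_R^{\boxtimes N_k}$, where $\det S_Y^{(k)}$ is the Vandermonde built from a basis of $H^{0}(Y,kL)$ and $|s_R|^{2}:=|s_{mR}|^{2/m}$ is the canonical weight of the $\mathbb{Q}$-divisor $R$ (independent of the clearing integer $m$). Consequently the canonical ensemble $(X^{N_k},\mu^{(N_k)})$ disintegrates: the push-forward under $f^{N_k}$ of $\mu^{(N_k)}$ is the canonical point process on $Y$ attached to the big line bundle $L$ with the weight obtained by integrating $|s_R|^{2}$ over the fibres, while, conditionally on the base positions $y_1,\dots,y_{N_k}$, the points $z_j$ are independent with $z_j$ distributed on the fibre $X_{y_j}$ according to the normalization of the flat Calabi--Yau measure $\rho_{y_j}:=|s_R|^{2}|_{X_{y_j}}$ (the fibres of the Iitaka fibration have Kodaira dimension zero, so $K_{X_y}$ is torsion and carries such a canonical measure).

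Next I would run the convergence argument on $Y$. Since $L$ is big, the point process on $Y$ falls within the scope of the pluripotential-theoretic method already used in the variety of general type case --- the large deviation principle of \cite{bbgz,b-b,berm6} together with the sub-exponential mean value inequality for quasi-subharmonic functions on high-dimensional Riemannian quotients refining \cite{li-sc} --- and its rate functional is a twisted K-energy whose unique minimizer $\mu_Y$ solves, in the sense of currents, the twisted Monge--Amp\`ere equation $\mathrm{Ric}\,\omega_Y=-\omega_Y+\omega_{WP}+[B]$ with $\omega_Y\in c_1(L)$, where $\omega_{WP}$ is the Weil--Petersson form of the variation of the Calabi--Yau structure on the fibres, equal to the curvature of the fibre-wise Calabi--Yau metrics and hence of the descended weight. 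Thus $f_{\ast}\delta_{N_k}\to\mu_Y$ in probability. Combining this with an elementary conditional law of large numbers in the fibre directions --- for $\varphi\in C(X)$ the conditional expectation of $\langle\delta_{N_k},\varphi\rangle$ given the base positions equals $\langle f_{\ast}\delta_{N_k},\bar\varphi\rangle$, $\bar\varphi(y):=\int_{X_y}\varphi\,d\rho_y$, while the conditional variance is $O(1/N_k)$ --- yields $\delta_{N_k}\to\mu_X$ in probability, where $\mu_X$ is the measure on $X$ obtained by disintegrating $\mu_Y$ over $Y$ with fibre measures $\rho_y$. By construction $\mu_X=f^{\ast}(\omega_Y^{\kappa})\wedge(\text{fibre-wise Calabi--Yau volume forms})$, which is precisely the canonical measure of Song--Tian and Tsuji \cite{s-t,ts}, the defining equations matching by the canonical bundle formula and the identification of $\omega_{WP}+[B]$ as the curvature of the Hodge-type metric on $L$.

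I expect the main obstacle to be the third step. One must establish, in the genuinely degenerate setting where $L$ is merely big and the descended weight is singular along the discriminant (and $M$ only semi-positive), the finite-energy hypotheses needed to apply the large deviation machinery of \cite{bbgz} and to make the mean value inequality effective uniformly along the fibration; equivalently, one needs the precise integrability of the fibre measures $\rho_y$ near the discriminant locus, which is also exactly what guarantees that the disintegrated measure $\mu_X$ is a well-defined finite measure and, a posteriori, that $\mu_X$ is the unique possible limit --- placing the result, as the abstract indicates, in the broader framework of (generalized) log canonical pairs in the Minimal Model Program \cite{bi}.
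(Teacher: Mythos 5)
Your reduction to the base $Y$ of the Iitaka fibration via Fujino--Mori's canonical bundle formula, and the disintegration of the canonical point process along the fibres, is exactly the strategy the paper follows. A few remarks on the bookkeeping: the factorization is more cleanly stated at the level of the associated measures, $(S_k\wedge\overline{S_k})^{1/k}=F^*\bigl((s_k\wedge\overline{s_k})^{1/k}e^{-\phi_H}\bigr)\wedge\mu_{X/Y}$ as in Lemma~\ref{lem:f-m}, than as a factorization of the Vandermonde section itself, and the descended weight on $Y$ is the Hodge metric $\phi_H$ on the Fujino--Mori line bundle $L_{X/Y}$ (the fibre-wise $L^{2/k}$-norm of a trivializing section of $kK_{X_y}$) rather than an integral of ``$|s_R|^2$''. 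Your conditional law of large numbers in the fibre directions is a valid alternative to the paper's direct use of the tensor-product factorization $(\mu^{(N)})_j=F^*(\mu_Y^{(N_k)})_j\wedge\mu_{X/Y}^{\otimes j}$ of the $j$-point correlation measures; both routes yield convergence in probability on $X$ once convergence on $Y$ is established. Also, you do not need Birkar--Cascini--Hacon--McKernan: making the Iitaka fibration a morphism after a birational modification is classical, and the paper explicitly remarks that finite generation of the canonical ring is not used.

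The step you flag but do not close is the one place where real work remains, so this is a genuine gap. To run the LDP/convergence argument on $Y$ you must control the singularity of the descended background measure $\mu_0=e^{-(\phi_H-\phi_0)}\mu_\phi$ near the discriminant locus, and your appeal to the machinery of \cite{bbgz,b-b} is not automatic there since $\mu_0$ is not a smooth volume form. The paper closes this using the singularity structure recorded in Lemma~\ref{lem:f-m}: by the theory of Hodge metrics (Kawamata, Schmid), $\phi_H$ has at worst a $\log\log$ perturbation of a klt pole along the discriminant, so $\mu_0=e^{-v}\mu_\Delta$ with $\Delta$ a klt divisor and $v$ upper semi-continuous, and in fact $\mu_0$ has an $L^p$-density for some $p>1$. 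The proof of Theorem~\ref{thm:big line bundle text} is then adapted: the upper bound on $\log Z_{N}$ works for any finite background measure, while for the lower bound one regularizes $v$ from above by continuous functions $v_j$, runs the argument with $e^{-v_j}\mu_\Delta$, and passes to the limit $j\to\infty$ exactly as in the klt case. Supplying that regularization argument is what is missing from your proposal.
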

The canonical measure $\mu_{X}$ was introduced by Song-Tian \cite{s-t}
in their study of the Kähler-Ricci flow and, independently, by Tsuji
\cite{ts} in his study of dynamical systems defined by Bergman kernels.
The measure $\mu_{X}$ is defined in terms of the Iitaka fibration
$F:\,X\dashrightarrow Y$ attached to $X:$ on the Zariski open subset
$Y_{0}$ where $F$ defines a smooth morphism the measure $\mu_{X}$
may be written as 
\[
\mu_{X}=(F^{*}\omega_{Y})^{\kappa}\wedge(\omega_{CY})^{n-\kappa},
\]
 where, fiberwise, $\omega_{CY}$ is a Ricci flat metric with normalized
volume and $\omega_{Y}$ is a canonical metric defined on the base
$Y$ in terms of a twisted Kähler-Einstein equation: it satisfies
(in a weak sense) the equation 
\[
\mbox{Ric }\ensuremath{\omega_{Y}=-\omega_{Y}+\omega_{WP}}
\]
 away from the branch locus of the fibration, where $\omega_{WP}$
is the generalized Weil-Petersson metric on $Y,$ measuring the infinitesimal
variation of the complex moduli of the Calabi-Yau fibers (see section
\ref{sec:Varieties-of-positive} for precise definitions). The proof
of the previous theorem relies on Fujino-Mori's canonical bundle formula
\cite{f-m} which allows one to reduce the problem to the base $Y$
of the Iitaka fibration. A similar argument was used by Tsuji in a
different context \cite{ts}. It was shown by Song-Tian \cite{s-t}
that, in the case when $K_{X}$ is semi-ample, the Kähler-Ricci flow
on $X$ converges weakly towards a canonical positive current which
coincides with $F^{*}\omega_{Y}.$ In our setting we obtain as an
immediate corollary of the previous theorem that, for \emph{any }variety
of positive Kodaira dimension, the canonical sequence of currents
$\omega_{k}\in c_{1}(K_{X})$ defined by formula \ref{eq:def of canonical sequ of current intro}
converges weakly to a canonical positive current in $c_{1}(K_{X})$
with minimal singularities, coinciding with $F^{*}\omega_{Y}$ on
a Zariski open subset of $X.$ This is hence more general then assuming
that $K_{X}$ be semi-ample. On the other hand, assuming the validity
of the fundamental conjectures of the MMP, i.e. the existence of a
minimal model and the abundance conjecture (which is not necessary
for our approach) the convergence of $\omega_{k}$ reduces to the
case when $K_{X}$ is semi-ample.

\subsubsection{Logarithmic generalizations and twisted Kähler-Einstein metrics}

The results stated above admit natural generalization to the logarithmic
setting, in the sense of MMP, which from the differential geometric
point of view is related to Kähler-Einstein metrics with conical and
cusp type singularities. To explain this first recall that (in the
smooth setting) a \emph{log canonical pair} $(X,D)$ consists of a
smooth complex algebraic variety $X$ and a $\Q-$divisor $D$ on
$X$ with simple normal crossings and coefficients in $]-\infty,1].$
In this setting the role of the canonical line bundle $K_{X}$ is
placed by the \emph{log canonical line bundle }$K_{X}+D$ and the
role of the Ricci curvature $\mbox{Ric \ensuremath{\omega}}$of a
metric $\omega$ is played by twisted Ricci curvature $\mbox{Ric \ensuremath{\omega}}-[D],$
where $[D]$ denotes the current of integration defined by $D.$ The
corresponding (twisted) Kähler-Einstein equation thus reads 
\[
\mbox{Ric \ensuremath{\omega}}=\beta\omega+[D]
\]
which should be interpreted in a weak sense (see \cite{ber-gu} for
a very general setting). In case the coefficients of $D$ are in $[0,1[$
the pair $(X,D)$ is said to be\emph{ Kawamata log terminal }(\emph{klt,}
for short) and if $K_{X}+D$ is ample the solution of the corresponding
(twisted) Kähler-Einstein equation then has conical singularities
along $D$ \cite{do-3,jmr,cgh,g-p}. 

To any klt log pair $(X,D)$ we may attach a sequence of canonical
probability measures $\mu^{(N_{k})}$ on $X^{N_{k}}$ defined as follows.
We take $\det S^{(k)}$ to be a generator in the determinant line
of $H^{0}(X,k(K_{X}+D))$ and multiply the local volume forms $dz\wedge d\bar{z}$
with $1/|s_{D}|^{2},$ where $s_{D}$ is a global holomorphic (multivalued)
section cutting out $D.$ The proof of the corresponding convergence
results can then be deduced from the general Theorem \ref{thm:big line bundle text}
, using that the pair $(X,D)$ determines a finite measure $\mu_{0}$
which is absolutely continuous with respect to the Lebesgue measure
(with an $L^{p}-$density, for some $p>1)$ whose singularities/degeneracies
are encoded by the divisor $D.$ 

More generally, as will be shown elsewhere, the convergence results
for klt pairs can be extended to the setting of log canonical pairs
$(X,D).$ In the particular case when the the pair $(X,D)$ is defined
over the integers the present probabilistic framework then turns out
to be connected to the theory of cusp forms and canonical heights
in Arakelov geometry. In a nutshell, the point is that when $(X,D)$
is defined over the integers the corresponding generator $\det S^{(k)}$
can be taken to be defined over the integers and since such a generator
is uniquely determined up to a sign the corresponding partition function
$Z_{k}$ is canonically attached to the integral structure. Using
the generalization of Gillet-Soulé's arithmetic Hilbert-Samuel theorem
in \cite{ber-mont} it can then be shown that $-\frac{1}{N_{k}}\log Z_{N_{k}}$
converges, as $k\rightarrow\infty,$ to the height (i.e. the top arithmetic
intersection number) of the corresponding integral model of $K_{X}+D,$
with respect to the Kähler-Einstein metric.

\subsubsection*{Acknowledgment}

It is a pleasure to thank Sebastien Boucksom, David Witt-Nyström,
Vincent Guedj and Ahmed Zeriahi for the stimulating collaborations
\cite{b-b,b-b-w,bbgz}, which paved the way for the present work.
I am also grateful to Bo Berndtsson for infinitely many fruitful discussions
on complex analysis and Kähler geometry over the years. The present
paper, together with the companion paper \cite{berm8}, supersedes
the first arXiv version of the paper (the latter paper also contained
results about log pairs with log canonical singularities and applications
to arithmetic geometry, which will appear in a separate paper). This
work was supported by grants from the ERC and the KAW foundation

\subsubsection*{Organization}

To make the paper reasonably self-contained we start in Section \ref{sec:Outline-of-the}
by outlining the proof of the LDP in Theorem \ref{thm:thm gen type intro}
and point out some relations to statistical mechanics which are the
subject of the companion paper \cite{berm8}. Then in Section \ref{sec:analytic}
we setup the analytical/pluripotential framework needed for the actual
proofs of the main results. Since the paper has been written mainly
for readers with a background in complex geometry in mind we start
Section \ref{sec:The-LDP-for} with a lightning review of the relevant
notions in probability theory (that can be found in any standard probability
text book). Then the proof of a general LDP for certain $\beta-$deformations
of determinantal point processes is given (using the asymptotic submean
inequality proved in the companion paper \cite{berm8}, where a general
LDP for Gibbs measures is also proved. In Section \ref{sec:Varieties-of-positive}
it is shown how to apply the general LDP in the complex geometric
framework of canonical point processes of klt pairs and varieties
of of positive Kodaira dimension to prove the results formulated in
the introduction of the paper. Finally, in Section \ref{sec:Fano-manifolds-and}
we show how to define canonical point processes on Fano manifolds
and discuss the relations to a new notion of stability, stating a
number of conjectures.

\section{\label{sec:Outline-of-the}Outline of the proof of the convergence
and relations to statistical mechanics}

In this section we outline the proof of the large deviation principle
in Theorem \ref{thm:conv for log general type} concerning the case
when $X$ is a variety of general type, i.e. the canonical line bundle
$K_{X}$ is big. In fact the same arguments apply to the more general
setting of $\beta-$deformed determinantal point processes associated
to a big line bundle $L$ described in Section \ref{sec:The-LDP-for}.

Fixing a reference volume form $dV$ on $X$ and denoting by $\left\Vert \cdot\right\Vert $
the corresponding metric on the canonical line bundle $L:=K_{X}$
the canonical probability measure $\mu^{(N_{k})}$ on $X^{N}$ can
be factorized as 
\[
\mu^{(N_{k})}=\frac{1}{Z_{N_{k}}}\left\Vert \det S^{(k)}\right\Vert ^{2/k}dV^{\otimes N_{k}},
\]
in terms of the induced metric on $K_{X^{N_{k}}}$ for any given generator
$\det S^{(k)}$ of the determinant line $\det H^{0}(X,kK_{X}).$ It
will be convenient to take $\det S^{(k)}$ to be the generator determined
by a basis in $H^{0}(X,kL)$ which is orthonormal with respect to
the $L^{2}-$product determined by $(\left\Vert \cdot\right\Vert ,dV)$
for any fixed volume form $dV$ on $X.$ By general principles (Lemma
\ref{lem:ldp for nonnormalized meas}) it will be enough to obtain
the corresponding LDP for the non-normalized measures on $X^{N_{k}}$
obtained by discarding the normalization factor $Z_{N_{k}}.$ Fixing
a metric on the space $\mathcal{M}_{1}(X)$ of all normalized measures
$\mu$ on $X,$ compatible with the weak topology, it will be convenient
to identify a ball $B_{\epsilon}(\mu)$ in $\mathcal{M}_{1}(X),$
centered at $\mu$ of radius $\epsilon,$ with a subset of $X^{N}$
using the map 
\[
\delta_{N}:=\frac{1}{N}\sum_{i=1}^{N}\delta_{x_{i}}:X^{N}\rightarrow\mathcal{M}_{1}(X).
\]
Then the large deviation principle (LDP) in Theorem \ref{thm:conv for log general type},
is equivalent to the following upper and lower bounds on the integral
of $\mu^{(N_{k})}$ over small balls $B_{\epsilon}(\mu):$ 
\begin{equation}
\lim_{\epsilon\rightarrow0}\limsup_{k\rightarrow\infty}\frac{1}{N_{k}}\log\int_{B_{\epsilon}(\mu)}\left\Vert \det S^{(k)}\right\Vert ^{2/k}dV^{\otimes N_{k}}\leq-F(\mu)\label{eq:ldp in outline}
\end{equation}
and 
\[
\lim_{\epsilon\rightarrow0}\liminf\frac{1}{N_{k}}\log\int_{B_{\epsilon}(\mu)}\left\Vert \det S^{(k)}\right\Vert ^{2/k}dV^{\otimes N_{k}}\geq-F(\mu),
\]
for a functional $F$ on $\mathcal{M}_{1}(X)$ called the\emph{ rate
functional }of the LDP (in the present setting this could be taken
as the definition of the LDP, but see Section \ref{sub:Probabilistic-preliminaries}
for the general situation). The idea of the proof of the LDP is to
handle the the density $\left\Vert \det S^{(k)}\right\Vert ^{2/k}$
and the reference measure $dV^{\otimes N_{k}}$ separately. The point
is that, if the density would have been constant, i.e. if $\mu^{(N)}=dV^{\otimes N},$
a classical theorem of Sanov says that the LDP above holds with rate
function given by the \emph{entropy of $\mu$ relative to $\mu_{0},$
}i.e. the functional defined by 
\[
D_{\mu_{0}}(\mu):=\int_{X}\log\frac{\mu}{\mu_{0}}\mu,
\]
 when $\mu$ has a density with respect to $\mu_{0}$ and otherwise
$D_{\mu_{0}}(\mu):=\infty.$ To handle the present setting we will
combine Sanov's theorem with the logarithmic asymptotics for the sup
of $\left\Vert \det S^{(k)}\right\Vert ^{2}$ established in \cite{b-b}:
for any given any function $u\in C^{0}(X)$ 
\begin{equation}
\lim_{k\rightarrow\infty}\frac{1}{kN_{k}}\log\sup_{X^{N_{k}}}\left(\left\Vert \det S^{(k)}\right\Vert ^{2}(x_{1},....,x_{N_{k}})e^{-ku(x_{1})+...+ku(x_{N_{k}})}\right)=-\mathcal{F}(u),\label{eq:asymptotics of sup in outline}
\end{equation}
 where $\mathcal{F}$ is a certain functional on the space $C^{0}(X)$
whose definition we will come back to below (in geometric terms the
factor involving $u$ just corresponds to replacing the background
metric $\left\Vert \cdot\right\Vert $ with $\left\Vert \cdot\right\Vert _{u}:=\left\Vert \cdot\right\Vert e^{-u/2}).$

\subsection{The upper bound in the LDP}

To prove the upper bound of the integrals appearing in the LDP \ref{eq:ldp in outline}
we fix a function $u\in C_{0}(X)$ and rewrite 
\[
\left\Vert \det S^{(k)}\right\Vert ^{2/k}=\left(\left\Vert \det S^{(k)}\right\Vert ^{2}e^{-ku}\right)^{1/k}e^{u},
\]
 where we have identified $u$ with the corresponding function $u(x_{1})+...+u(x_{N_{k}})$
on $X^{N}.$ Then, trivially, for any fixed $\epsilon>0,$ 
\begin{equation}
\int_{B_{\epsilon}(\mu)}\left\Vert \det S^{(k)}\right\Vert ^{2/k}dV^{\otimes N_{k}}\leq\sup_{B_{\epsilon}(\mu)}\left(\left\Vert \det S^{(k)}\right\Vert ^{2}e^{-ku}\right)^{1/k}\int_{X^{N}}\mu_{u}^{\otimes N_{k}},\,\,\mu_{u}:=e^{u}dV\label{eq:trivial ineq in outline}
\end{equation}
Hence, replacing the sup over $B_{\epsilon}(\mu)$ with the sup over
all of $X^{N_{k}}$and applying Sanov's theorem relative to the tilted
volume form $\mu_{u}$ gives 
\[
\lim_{\epsilon\rightarrow0}\limsup_{k\rightarrow\infty}\frac{1}{N_{k}}\log\int_{B_{\epsilon}(\mu)}\left\Vert \det S^{(k)}\right\Vert ^{2/k}dV^{\otimes N_{k}}\leq-\mathcal{F}(u)-\int_{X}u\mu-D_{dV}(\mu),
\]
 using that $D_{e^{u}dV}(\mu)=-\int u\mu+D_{dV}(\mu).$ Finally, taking
the infimum over all $u\in C^{0}(X)$ gives an upper bound on the
$\limsup$ appearing in formula \ref{eq:ldp in outline} with $F(\mu)$
defined by 
\[
F(\mu):=\mathcal{F}^{*}(\mu)+D_{dV}(\mu),\,\,\,\,\mathcal{F}^{*}(\mu):=\sup_{u\in C^{0}(X)}\left(\int_{X}u\mu+\mathcal{F}(u)\right)
\]

\subsection{The lower bound in the LDP}

As usually the proof of the lower bound in the LDP is the hardest
one. The starting point is the fact, proved in \cite{b-b}, that the
limiting functional $\mathcal{F}$ on $C^{0}(X)$ appearing in formula\ref{eq:asymptotics of sup in outline}
is Gateaux differentiable on $C^{0}(X)$ (it is also concave, as follows
directly from its definition) and its differential at a given $u\in C^{0}(X)$
is represented by a probability measure $d\mathcal{F}_{|u}.$ In particular,
the sup in the formula defining $\mathcal{F}^{*}(\mu)$ is attained
iff there exists a function $u_{\mu}\in C^{0}(X)$ such that 
\begin{equation}
d\mathcal{F}_{|u_{\mu}}=\mu.\label{eq:in image of gradient}
\end{equation}
 Assuming that the measure $\mu$ appearing in the LDP has this property
the idea of the proof of the LDP is to try to reverse the trivial
inequality \ref{eq:trivial ineq in outline} for $u=u_{\mu}.$ To
this end we first recall that, as observed in \cite{b-b-w}, it follows
from a simple convex analysis argument that, if $\mathbf{x}^{(N_{k})}$
in $X^{N_{k}}$ denotes a sequence of ordered configurations of points
realizing the sup in formula \ref{eq:asymptotics of sup in outline}
for a given $u$, then 
\[
\delta_{N}(\mathbf{x}^{(N_{k})})\rightarrow d\mathcal{F}_{|u}
\]
 in $\mathcal{P}(X),$ with respect to the weak topology. In particular,
taking $u=u_{\mu}$ it follows that, for any fixed $\epsilon>0,$
\[
B_{\epsilon/2}(\mu)\subset B_{\epsilon}\left(\delta(\mathbf{x}^{(N_{k})})\right)\subset B_{2\epsilon}(\mu)
\]
 for $N$ sufficiently large. Accordingly, 
\[
\int_{B_{2\epsilon}(\mu)}\left\Vert \det S^{(k)}\right\Vert ^{2/k}dV^{\otimes N_{k}}\geq\int_{B_{\epsilon}\left(\delta(\mathbf{x}^{(N_{k})})\right)}\left\Vert \det S^{(k)}\right\Vert _{u}^{2/k}\mu_{u}^{\otimes N}
\]
and hence it will be enough to establish an asymptotic submean property
of the following form 
\begin{equation}
\int_{B_{\epsilon}\left(\delta(\mathbf{x}^{(N_{k})})\right)}\left\Vert \det S^{(k)}\right\Vert _{u}^{2/k}\mu_{u}^{\otimes N}\geq e^{N\epsilon'}\left\Vert \det S^{(k)}(\mathbf{x}^{(N_{k})})\right\Vert _{u}^{2/k}\int_{B_{\epsilon''}\left(\delta(\mathbf{x}^{(N_{k})})\right)}\mu_{u}^{\otimes N},\label{eq:submean ineq in outline}
\end{equation}
 for some positive numbers $\epsilon'$ and $\epsilon''$ smaller
than $\epsilon.$ Indeed, we would then get 
\[
\liminf_{k\rightarrow\infty}\frac{1}{N_{k}}\log\int_{B_{2\epsilon}(\mu)}\left\Vert \det S^{(k)}\right\Vert ^{2/k}dV^{\otimes N_{k}}\geq-\epsilon'-\mathcal{F}(u_{\mu})+\liminf_{k\rightarrow\infty}\frac{1}{N_{k}}\log\int_{B_{\epsilon''/2}(\mu)}\mu_{u}^{\otimes N}
\]
and letting $\epsilon\rightarrow0$ we could then conclude the proof
by invoking Sanov's theorem again. We recall that we made the assumption
\ref{eq:in image of gradient} that the fixed measure $\mu$ be contained
in the image of $C^{0}(X)$ under the ``gradient map'' $d\mathcal{F}.$
But in fact all that is needed in the proof of the lower bound in
the LDP given above is that any $\mu$ such that $\mathcal{F}^{*}(\mu)<\infty$
has the following ``regularization property'': there exists a sequence
of $u_{j}\in C^{0}(X)$ such that 
\begin{equation}
\mu_{j}:=d\mathcal{F}_{|u_{j}}\rightarrow\mu,\,\,\,\mathcal{F}^{*}(\mu_{j})\rightarrow\mathcal{F}^{*}(\mu)\label{eq:density property}
\end{equation}
In the present setting this property is shown in Lemma \ref{lem:appr}
(but it can also be obtained from general convex analysis results
\cite{berm8}). 

As for the submean property \ref{eq:submean ineq in outline} it is
proved in the companion paper \cite{berm8}, with $\epsilon'=\epsilon$
and $\epsilon''=\epsilon^{2}$ (the result also appeared as Theorem
2.1 in the first preprint version of the present paper on ArXiv).
The starting point of the proof is the observation that if the metric
on $\mathcal{M}_{1}(X)$ is taken as the $L^{2}-$Wasserstein metric
induced by a fixed Kähler metric on $X$ with volume form $dV,$ then
$B_{\epsilon}\left(\delta(\mathbf{x}^{(N_{k})})\right)$ may be identified
with the pullback to $X^{N}$ of the ball of radius $\epsilon$ centered
at the projection of the point $\mathbf{x}^{(N_{k})}$ in the Riemannian
orbifold $X^{(N)}:=X^{N}/S^{N},$ where $S^{N}$ denotes the symmetric
group with $N$ elements acting by permutations on $X^{N}.$ The inequality
in question then follows from a new submean inequality for quasi-subharmonic
functions on Riemannian orbifold quotients in large dimensions, obtained
as refinement of a Riemannian submean inequality of Li-Schoen \cite{li-sc}
(using, in particular, the explicit dependence on the dimension in
the Cheng-Yau gradient estimate on Riemannian manifolds).

\subsection{Identification of the rate function with the K-energy}

It follows from general principles that the rate functional $F(\mu)$
is lower semi-continuous (lsc) and strictly convex on $\mathcal{M}_{1}(X)$
(using the strict convexity of the relative entropy). In particular,
it admits a unique minimizer $\mu_{min}.$ To see that, in the present
setting, $\mu_{min}$ is the volume form of the Kähler-Einstein metric,
one needs to invoke the explicit expression for the differential $d\mathcal{F}$
obtained in \cite{b-b}:
\[
d\mathcal{F}_{|u}=\frac{1}{V}(\theta+dd^{c}(P_{\theta}u)^{n},\,\,\,\,\,dd^{c}:=\frac{i}{2\pi}\partial\bar{\partial,}\,\,\,V:=K_{X}^{n}
\]
 where $\theta$ is the curvature form of the reference metric $\left\Vert \cdot\right\Vert $
on $K_{X}$ and $P_{\theta}$ is the maximally increasing projection
operator from $C^{0}(X)$ to the space $PSH(X,\theta)$ of all $\theta-$plurisubharmonic
(psh) functions on $X$ and where the complex Monge-Ampère operator
$(\theta+dd^{c}\varphi)^{n}$ on $PSH(X,\theta)$ is defined in the
sense of pluripotential theory (as recalled in Section \ref{sec:analytic}).
In case $K_{X}$ is ample one can then invoke the Aubin-Yau theorem
giving the existence of a Kähler-Einstein metric $\omega_{KE}$ on
$X$ and directly check that its volume form $\mu_{KE}$ is a critical
point of $\mathcal{F}$ and hence, by convexity, $\mu_{KE}=\mu_{min}$
(see \cite{berm8}). Alternatively, under the Calabi-Yau isomorphism
\[
\omega\mapsto\omega^{n}/V
\]
between the space $\mathcal{K}(K_{X})$ of all Kähler metrics in $K_{X}$
and the subspace of $\mathcal{M}_{1}(X)$ consisting of volume forms
the rate functional $F$ gets identified (on the dense subset of volume
forms) with Mabuchi's K-energy functional $\kappa$ whose unique critical
point in $\mathcal{K}(K_{X})$ is the Kähler-Einstein metric $\omega_{KE}$
(see Section \ref{sub:Identification-of-the}). One virtue of the
functional $F$ is that it is defined on the whole space $\mathcal{M}_{1}(X)$
and can thus be used as an extension of the K-energy to the space
of all metrics of finite energy \cite{berm6,bbgez} (as further exploited
in \cite{bdl} to study a weak version of the Calabi flow realized
as a weak gradient flow of the extended K-energy functional, following
\cite{st}).

In the general case of a variety $X$ of general type the existence
of a unique Kähler-Einstein current $\omega_{KE}$ in $c_{1}(K_{X})$
with minimal singularities was established in \cite{begz} (using
various approximations to reduce the problem to the Aubin-Yau theorem
on a sequence of blow-ups of $X)$ and in \cite{bbgz} by a direct
variational approach. The current $\omega_{KE}$ can also be obtained
as a consequence of the results in \cite{egz} concerning Kähler-Einstein
metrics on canonically polarized varieties with canonical singularities,
using the deep finite generation of the canonical ring \cite{bi}.

\subsection{\label{sub:Relations-to-statistical}Relations to statistical mechanics
and Gibbs measures}

Next, we briefly point out some relations between the present setting
and statistical mechanics (which are further expanded on in \cite{berm8}).
Consider a system of $N$ identical particles on a space $X$ interacting
by the microscopic interaction energy $H^{(N)}(x_{1},...,x_{N})$
on $X^{N}$ assumed symmetric (as the particles are identical) - the
function $H^{(N)}$ is often called the \emph{Hamiltonian}. Given
a reference volume form $\mu_{0}$ on $X$ the distribution of particles,
in thermal equilibrium at the inverse temperature $\beta\in\R,$ is
described by the random point process with $N$ particles defined
by the corresponding \emph{Gibbs measure:} 
\[
\mu_{\beta}^{(N)}:=e^{-\beta H^{(N)}}\mu_{0}^{\otimes N}/Z_{N,\beta},\,\,\,Z_{N.\beta}:=\int_{X^{N}}e^{-\beta_{N}H^{(N)}}\mu_{0}^{\otimes N},
\]
 where the normalizing constant $Z_{N,\beta}$ is called the\emph{
partition function} and is assumed to be finite. Formally, in the
zero temperature limit $\beta=\infty$ the corresponding empirical
measure $\delta_{N}$ is thus concentrated on the configurations of
points $\mathbf{x}^{(N)}:=(x_{1},...,x_{N})$ minimizing $H^{(N)}$
on $X^{N}.$ If the weak limit 
\[
\lim_{N\rightarrow\infty}\delta_{N}(\mathbf{x}^{(N)}):=\mu_{\infty}
\]
 exists in $\mathcal{M}_{1}(X)$ we can thus think of $\mu_{\infty}$
as representing the deterministic macroscopic equilibrium state of
the system at zero temperature (which is independent of the reference
measure $\mu_{0}$ on $X).$ In the opposite case of infinite temperature,
i.e. $\beta=0$ the system is completely random at the microscopic
level, i.e. the positions $x_{i}$ are independent random points in
$X$ with identical distribution $\mu_{0}.$ However, macroscopically
(i.e. as $N\rightarrow\infty)$ the corresponding random measures
$\delta_{N}$ converge, by Sanov's theorem, exponentially in probability
towards the deterministic measure $\mu_{0}$ at a rate $N$ with rate
function given by the entropy $D_{\mu_{0}}(\mu)$ relative to $\mu_{0}.$
\footnote{The entropy is usually defined as $-D_{\mu_{0}}(\mu)$ in the physics
literature, denoted by $S,$ and Sanov's theorem can be seen as a
mathematical justification of Boltzmann's original formula expressing
the entropy as the logarithm of the number of microscopic states consistent
with a given macroscopic state.} But the question is what happens in at a fixed finite positive inverse
temperature $\beta?$ According, to some time-honored heuristics in
thermodynamics the corresponding macroscopic measure $\mu_{\beta}$
should minimize the corresponding \emph{free energy functional} $F_{\beta}$
on $\mathcal{M}_{1}(X):$ 
\[
F_{\beta}(\mu):=E(\mu)+\frac{1}{\beta}D_{\mu_{0}}(\mu),
\]
 where $E(\mu)$ is a macroscopic analog of the microscopic interaction
energy $H^{(N)},$ which is minimized on the zero temperature state
$\mu_{\infty.}$ The argument outlined in the previous section (and
proved in the companion paper \cite{berm8} in full generality) shows
that these heuristic can be made precise, in the sense of large deviations,
under the following assumptions on $H^{(N)}$ (assuming for simplicity
that $X$ is compact):
\begin{itemize}
\item The limit of linearly perturbed minimal energies 
\[
\mathcal{F}(u):=\lim_{N\rightarrow\infty}\inf_{X^{N}}\frac{1}{N}(H^{(N)}+u)
\]
exists, for any $u\in C^{0}(X)$ and the functional $\mathcal{F}$
is Gateux differentiable on $C^{0}(X)$ (with a differential $d\mathcal{F}$
taking values in $\mathcal{M}_{1}(X))$
\item The interaction energy $H^{(N)}$ is uniformly quasi-superharmonic,
i.e. there exists a constant $C>0$ such that
\end{itemize}
\[
\Delta_{x_{1}}H^{(N)}\leq C,
\]
where $\Delta$ denotes the Laplacian defined with respect to a fixed
Riemannian metric on $X$ and the subscript $x_{1}$ indicates that
the Laplacian acts on the first variable.

Then the macroscopic energy functional $E(\mu)$ is defined as the
Legendre-Fenchel transform of the functional $\mathcal{F}(u),$ and
the LDP shown in the companion paper \cite{berm8} says that the laws
of the corresponding empirical measures $\delta_{N}$ satisfy a LDP
with rate functional $F_{\beta}$ at a speed $\beta N.$ Moreover,
under suitable regularity assumptions the unique minimizer $\mu_{\beta}$
of $F_{\beta}$ can be represented as $\mu_{\beta}=d\mathcal{F}_{|u_{\beta}}$
for a function $u_{\beta}$ satisfying the following equation: 
\[
d\mathcal{F}_{|u_{\beta}}=e^{\beta u_{\beta}}
\]
However, unless $H^{(N)}$ is uniformly equicontinuous the function
$u_{\beta}$ will, in general, not be continuous. For example, in
the present setting $u_{\beta}$ will be a $\theta-$psh function
(of finite energy).

The general LDP above generalizes (in the case \textbf{$\beta>0)$}
the mean field type results in \cite{clmp,k} concerning the vortex
model for turbulence in two real dimensions (which in turn extend
to a singular setting previous results in \cite{m-s} ). In the latter
setting the Hamiltonian $H^{(N)}$ is of the explicit form 
\[
H^{(N)}(x_{1},....,x_{N})=-\frac{1}{(N-1)}\sum_{1\leq i<j\leq N}G(x_{i},x_{j})
\]
for a symmetric function $g$ (independent of $N)$ - in fact, $G$
is the Green function of the corresponding Laplacian, which is thus
singular along the diagonal. However, while the analysis in \cite{clmp,k}
reduces, thanks to the explicit formula above, to properties of the
function $G,$ the main point of the present approach is that it applies
in situations where $H^{(N)}(x_{1},....,x_{N})$ does not admit any
tractable explicit formula, as in the complex geometric setting. In
fact, a tractable formula does exist in\emph{ one} complex dimension,
namely the so called \emph{bosonization formula} on a Riemann surface
which involves explicit theta functions and regularized determinants
of Laplacians (the formula was used in a related large deviation setting
in \cite{z2}). The bosonization formula is particularly useful in
the case $\beta<0$ on a Riemann surface as will be explained in a
separate publication.

\section{\label{sec:analytic}Analytic setup }

In this section we introduce the analytical framework from \cite{b-b,bbgz}
needed for the proofs of the main results stated in the introduction
of the paper. As a courtesy to the reader we also sketch the proofs
of the relevant results in \cite{b-b,bbgz}.

\subsection{\label{sub:Setup complex}Setup}

Let $L\rightarrow X$ be a holomorphic line bundle over an $n-$dimensional
compact complex manifold $X.$

\subsubsection{Metrics, weights and $\theta-$psh functions}

We will denote by $\left\Vert \cdot\right\Vert $ a Hermitian metric
on $L.$ Occasionally we will use additive notation for metrics, where
the metric is represented by a \emph{weight} denoted by $\phi,$ which
is a short hand for a collection of local functions: if $s$ is a
trivializing local holomorphic section of $L,$ i.e. $s$ is non-vanishing
an a given open set $U$ in $X,$ then $\phi_{|U}:=\log\left\Vert s\right\Vert ^{2}.$
In this notation the normalized curvature form of $\left\Vert \cdot\right\Vert $
may be (locally) written as 
\[
\theta:=dd^{c}\phi,\,\,\,\,dd^{c}:=\frac{i}{2\pi}\partial\bar{\partial,}
\]
representing, globally, the first Chern class $c_{1}(L)$ in the cohomology
group $H^{2}(X,\R)\cap H^{2}(X,\Z).$ More generally, if the weight
$\phi$ is in $L_{loc}^{1},$ the previous formula defines the curvature
as a current on $X.$ A (possibly singular( metric is said to be of
positive curvature if $\theta\geq0$ holds in the sense of currents.
Following standard practice it will also be convenient to identify
positively curved metrics on $L$ with $\theta-$psh functions, as
follows. Fixing once and for all a smooth Hermitian metric $\left\Vert \cdot\right\Vert $
on $L$ any other continuous metric may be written as 
\[
\left\Vert \cdot\right\Vert _{\varphi}^{2}:=e^{-\varphi}\left\Vert \cdot\right\Vert ^{2}
\]
for a continuous function $\varphi$ on $X,$ i.e. $\varphi\in C^{0}(X).$
In particular, the curvature current of the metric $\left\Vert \cdot\right\Vert _{\varphi}^{2}$
may then be written as 
\[
\theta_{\varphi}:=\theta+dd^{c}\varphi
\]
More generally, this procedure gives a correspondence between the
space of all (singular) metrics on $L$ with positive curvature current
and the space $PSH(X,\theta)$ defined as the space of of all upper-semi
continuous functions $\varphi$ on $X$ such that $\theta_{\varphi}\geq0$
holds in the sense of currents. We will use the same notation $\left\Vert \cdot\right\Vert $
for the induced metrics on tensor powers of $L$ etc. In the weight
notation the induced weight on the $k$ th tensor power $kL$ is thus
given by $k\phi.$ We recall that $\varphi_{1}\in PSH(X,\theta)$
is said to be more singular than $\varphi_{2}\in PSH(X,\theta)$ if
there exists a constant $C$ such that $\varphi_{1}\leq\varphi_{2}+C.$
\begin{example}
\label{exa:Ricci curvature}A volume form $dV$ on $X$ induces a
smooth metric on $L:=K_{X}:=\det(T^{*}X)$ whose local weight is given
by $\phi_{U}:=\log\frac{dV}{\frac{i}{2}dz_{1}\wedge d\bar{z}_{1}\wedge\cdots}$
in the local trivialization $\frac{i}{2}dz_{1}\wedge d\bar{z}_{1}\wedge\cdots\wedge\frac{i}{2}dz_{n}\wedge d\bar{z}_{n}$
of $K_{X}$ induced by given local holomorphic coordinates $z_{1},...,z_{n}$
on $U\subset X.$ If $dV$ is the volume form of a Kähler metric $\omega$
on $X,$ then the curvature form $\theta$ of the corresponding metric
on $K_{X}$ coincides with $-\mbox{Ric}\,\omega.$ 
\end{example}

\subsubsection{Holomorphic sections and big line bundles}

We will denote by $H^{0}(X,kL)$ the space of all global holomorphic
sections with values in the $k$ th tensor power of $L.$ We will
usually assume that $L$ is\emph{ big}, i.e. 
\[
N_{k}:=\dim H^{0}(X,kL)=Vk^{n}+o(k^{n}),
\]
 for a positive number $V,$ called the\emph{ volume of $L.$ }In
the case when $L=K_{X}$ and $s_{k}\in H^{0}(X,kK_{X})$ we can attach
a canonical measure $\mu_{s_{k}}$ on $X$ to $s_{k}$ which we will,
abusing notation somewhat, sometimes write as 
\[
\mu_{s_{k}}:=i^{n^{2}}\left(s_{k}\wedge\overline{s_{k}}\right)^{1/k}=\left|s_{k}\right|^{2/k}idz\wedge d\bar{z},
\]
 where in the right hand side we have locally identified $s_{k}$
with a holomorphic function, defined with respect to the local trivialization
$dz^{\otimes k}$ of $K_{X}$ ($dz:=dz_{1}\wedge\cdots\wedge dz_{n}$
) induced by a choice of local coordinates $z.$

\subsubsection{Divisors, log pairs and singular volume forms}

We recall that an ($\R-$) divisor $D$ on a complex manifold $X$
is a formal finite sum of one-dimensional irreducible subvarieties:
\[
D=\sum_{i=1}^{m}c_{i}D_{i},\,\,\,\,c_{i}\in\R
\]
Following the standard notation in Minimal Model Program (MMP) a pair
$(X,D)$ is called a \emph{log pair} and $(X,D)$ it is said to be\emph{
log smooth} if $D$ has simple normal crossings (snc), i.e. locally
we can always choose holomorphic coordinates so that $D_{i}=\{z_{i}=0\}).$
Henceforth $(X,D)$ will always refer to a log smooth pair (anyway
this can always be arranged by passing to a log resolution). The pair
$(X,D)$ is said to be\emph{ log canonical (lc)} if $c_{i}\leq1$
and\emph{ Kawamata Log Terminal (klt)} if $c_{i}<1.$ Denoting by
$s_{i}$ a holomorphic section of the line bundle $\mathcal{O}(D_{i})$
cutting out $D_{i}$ we will use the symbolic notation $s_{D}:=s_{1}^{c_{1}}\cdots s_{m}^{c_{m}}$
(which can viewed as a multi-section when $c_{i}\in\Q).$ The point
is that $\phi_{D}:=\log|s_{D}|^{2}$ is then a well-defined weight
on the corresponding $\R-$line bundle $\mathcal{O}(D)$ and its curvature
current coincides with the integration current $[D]$ defined by the
divisor $D.$ Given a volume form $dV$ on $X$ and a continuous metric
on the $\R-$line bundle $\mathcal{O}(D)$ we obtain a measure 

\[
\mu=\left\Vert s_{D}\right\Vert ^{-2}dV
\]
on $X,$ which thus has zeroes and poles along the irreducible divisors
$D_{i}$ with negative and positive coefficients, respectively The
klt assumption is equivalent to demanding that $\mu$ be a finite
measure. We will say that $\mu$ defined as above is a\emph{ singular
volume form in the singularity class defined by $D.$}

\subsection{Preliminaries on complex Monge-Ampère equations and pluripotential
theory}

In this section we will recall some definition and results of global
pluripotential theory in \cite{begz,bbgz,b-b,b-b-w,berm6}.

\subsubsection{The complex Monge-Ampère operator and the pluricomplex energy}

Let $L\rightarrow X$ be a big line bundle and fix, as above, a smooth
Hermitian metric $\left\Vert \cdot\right\Vert $ on $L$ with curvature
form $\theta.$ For a smooth curvature form $\theta_{\varphi}(:=\theta+dd^{c}\varphi)$
one defines the Monge-Ampère measure of the function $\varphi$ (with
respect to $\theta)$ as 
\begin{equation}
MA(\varphi):=\theta_{\varphi}^{n}/V\label{eq:def of ma meas text}
\end{equation}
 i.e. the top exterior power of the corresponding curvature form divided
by the volume $V$ of the class $[\theta]$ (see below). More generally,
according to the classical local pluripotential theory of Bedford-Taylor
the expression in formula \ref{eq:def of ma meas text} makes locally
sense for any bounded $\theta-$psh function (and the corresponding
Monge-Ampère measure does not charge pluripolar subsets, i.e. sets
of the form $\left\{ \varphi=-\infty\right\} $ for a $\theta-$psh
function $\varphi.$ The corresponding local Monge-Ampère measure
is continuous (in the weak topology) with respect to decreasing sequences
of bounded $\theta-$psh functions. In general, following \cite{begz},
for any $\varphi\in PSH(X,\theta)$ we will denote by $MA(\varphi)$
the \emph{non-pluripolar Monge-Ampère measure} $MA(\varphi),$ which
is a globally well-defined measure on $X$ not charging pluripolar
subsets and in particular not Zariski closed set. The measure $MA(\varphi)$
is defined by replacing the ordinary wedge products with the so called
non-pluripolar products introduced in \cite{begz}). In particular,
we recall that for $\varphi$ an element in the subspace $PSH(X,\theta)_{min}$
of functions with minimal singularities the total mass of $\theta_{\varphi}^{n}$
is independent of $\varphi$ and may be taken as the definition of
the volume $V$ of the class $[\theta].$ With the normalization in
\ref{eq:def of ma meas text} this thus means that for any $\varphi$
in $PSH(X,\theta)_{min}$ the measure $MA(\varphi)$ is a probability
measure on $X.$ On the space $PSH(X,\theta)_{min}$ there is an energy
type functional, denoted by $\mathcal{E},$ which may be defined as
a primitive for the one-form defined by $MA(\varphi)/V,$ i.e. 
\begin{equation}
d\mathcal{E}_{|\varphi}=MA(\varphi),\label{eq:def of energyfunc as primitive}
\end{equation}
(in the sense that $d\mathcal{E}(\varphi+tu)/dt=\int_{X}MA(\varphi)u/V$
at $t=0).$ The functional $\mathcal{E}$ is only defined up to an
additive constant which may be fixed by the normalization condition
$\mathcal{E}(v_{\theta})=0$ for some reference element $v_{\theta}$
in $PSH(X,\theta)_{min}$. When $L$ the reference curvature form
$\theta$ can be taken to be a Kähler form. Integrating the defining
relation \ref{eq:def of energyfunc as primitive} along a line segment
in $PSH(X,\theta)_{min}$ reveals that 
\[
\mathcal{E}(\varphi)=\frac{1}{(n+1)V}\int_{X}\varphi\sum_{j=0}^{n}\theta_{\varphi}^{j}\wedge\theta_{\varphi}^{n-j}
\]
(but the explicit formula for $\mathcal{E}$ will not really be used
in the sequel). Occasionally, we will write $\mathcal{E}_{\theta}$
to indicate the dependence of $\mathcal{E}$ on the fixed normalization.
We will also denote by $\mathcal{E}_{\theta}$ the unique upper semi-continuous
extension of $\mathcal{E}_{\theta}$ to all of $PSH(X,\theta)$ and
write 
\[
\mathcal{E}^{1}(X,\theta):=\left\{ \varphi\in PSH(X,\theta):\,\,\mathcal{E}_{\theta}(\varphi)>-\infty\right\} ,
\]
 which is called the space of all functions on $X$ with \emph{finite
energy. }Now following \cite{bbgz} the\emph{ pluri-complex energy}
$E_{\theta}(\mu)$ of a probability measure $\mu$ is defined by
\begin{equation}
E_{\theta}(\mu):=\sup_{\varphi\in PSH(X,\theta)}\mathcal{E}_{\theta}(\varphi)-\left\langle \varphi,\mu\right\rangle ,\label{eq:def of e as sup}
\end{equation}
As recalled in the following theorem the sup defining $E_{\theta}$
is in fact attained: 
\begin{equation}
E_{\theta}(\mu):=\mathcal{E}_{\theta}(\varphi_{\mu})-\left\langle \varphi_{\mu},\mu\right\rangle \label{eq:energy in terms of pot}
\end{equation}
for a unique function $\varphi_{\mu}\in\mathcal{E}^{1}(X,\theta)/\R$
if $E_{\theta}(\mu)<\infty$ where 
\begin{equation}
MA(\varphi_{\mu})=\mu.\label{eq:potential of meas}
\end{equation}

\begin{thm}
\label{thm:var sol of ma}\cite{bbgz} The following is equivalent
for a probability measure $\mu$ on $X:$ 
\begin{itemize}
\item $E_{\theta}(\mu)<\infty$
\item \textup{$\left\langle \varphi,\mu\right\rangle <\infty$ for all $\varphi\in\mathcal{E}^{1}(X,\theta)$}
\item $\mu$ has a potential $\varphi_{\mu}\in\mathcal{E}^{1}(X,\theta$),
i.e. equation \ref{eq:potential of meas} holds
\end{itemize}

Moreover, $\varphi_{\mu}$ is uniquely determined mod $\R,$ i.e.
up to an additive constant and can be characterized as the function
maximizing the functional whose sup defines $E_{\theta}(\mu)$ (formula
\ref{eq:def of e as sup}). 

\end{thm}
\begin{proof}
Since the proof fits naturally into the present probabilistic framework
we outline its main ingredients. The function $\varphi$ will be obtained
by maximizing the following functional on $\mathcal{E}^{1}(X):$
\[
\mathcal{G}(\varphi):=\mathcal{E}(\varphi)-\left\langle \varphi,\mu\right\rangle 
\]
By definition any critical point of $\mathcal{G}$ i.e. function $\varphi$
such that $d\mathcal{G}_{|\varphi}=0$ satisfies the previous equation.
As usual in the direct method of the calculus of variations there
are two main steps:
\begin{itemize}
\item The functional $\mathcal{G}$ admits a minimizer $\psi$ on $\mathcal{E}^{1}$ 
\item The minimizer $\psi$ is a critical point.
\end{itemize}
The last step would be automatic if $\psi$ were known to be an interior
point in the space $\mathcal{E}^{1}.$ However, as the latter space
is only convex $\psi$ could be in the boundary of $\mathcal{E}^{1}$
viewed as a subset of $PSH(X,\theta).$ To get around this difficulty
one observes that, by monotonicity, $\psi$ is also a maximizer of
the functional $\tilde{\mathcal{G}}$ obtained by replacing $\mathcal{E}$
with the functional $\mathcal{F}_{\theta}:=(\mathcal{E}_{\theta}\circ P_{\theta}),$
where $P_{\theta}$ is the projection operator defined below (and
which already appeared in the outline in Section \ref{sec:Outline-of-the}).
The latter functional has the virtue that it is defined on the affine
space $\{\psi\}+C^{0}(X)$ and that it is Gateaux differentiable,
with a differential given by formula in Theorem \ref{thm:thm A and B in b-b}
below. Hence, $\tilde{d\mathcal{G}_{|\psi}}=0,$ which is equivalent
to $d\mathcal{G}_{|\psi}=0,$ since $P_{\theta}\psi=\psi.$ 

Coming back to the first step the assumption $E(\mu)<\infty$ means
precisely that the sup of $\mathcal{G}$ is finite. To find a minimizer
the starting point is the compactness of the space $PSH(X,\theta)_{0}$
of all sup-normalized functions $\varphi$ (i.e. $\sup_{X}\varphi=0),$
using the usual $L^{1}-$topology (which is equivalent to the $L^{p}-$topology
for any $p\geq1).$ Since $\mathcal{G}$ descends to $PSH(X,\theta)_{0}$
and the functional $\mathcal{E}$ is lower semi-continuous all that
remains is to make sure that the integration pairing $\left\langle \cdot,\mu\right\rangle $
is continuous on $\mathcal{E}^{1}(X).$ This follows directly from
the compactness of $PSH(X,\theta)_{0}$ when $\mu$ is a volume form
or more generally when $\mu$ has an $L^{p}-$density for some $p>1.$
However, for a general $\mu$ of finite energy the proof turns out
to be rather subtle. Briefly, one first establishes a general coercivity
type inequality of the form 
\[
|-\left\langle \varphi,\mu\right\rangle |\leq C_{\mu}(-\mathcal{E}(\varphi))^{1/2}
\]
It would then be enough to know that that the integration pairing
$\left\langle \cdot,\mu\right\rangle $ is continuous on any sublevel
of the functional $-\mathcal{E}(\varphi).$ Indeed, taking a sup-normalized
maximizing sequence $\psi_{j}$ (i.e. tending to the sup $E(\mu)$
of $\mathcal{G})$ the coercivity inequality above ensures that $\psi_{j}$
stays in a fixed sublevel set of $-\mathcal{E}(\varphi)$ and hence,
by the lower semi-continuity of $\mathcal{G}$ on such a sublevel
set the $L^{1}-$limit in $PSH(X,\theta)$ is in $\mathcal{E}^{1}$
and maximizes $\mathcal{G},$ as desired. The continuity property
of $\left\langle \cdot,\mu\right\rangle $ in question is indeed shown
in \cite{bbgz}. However the proof is rather indirect and tied up
with the proof of the existence of a finite energy minimizer.\end{proof}
\begin{example}
In the classical case $n=1,$ i.e. $X$ is a Riemann surface and $\theta$
is smooth and strictly positive the space $\mathcal{E}^{1}(X,\theta)$
is the intersection of $PSH(X,\theta)$ with the vector space of all
functions $\varphi$ whose gradient is in $L^{2}(X),$ i.e. $\int d\varphi\wedge d^{c}\varphi<\infty.$
Accordingly, using integration by parts, $E_{\theta}(\mu)=\int d\varphi_{\mu}\wedge d^{c}\varphi_{\mu}/2.$
This means that $E_{\theta}(\mu)$ is the classical Dirichlet energy
of a charge distribution $\mu$ in the ``neutralizing back-ground
charge $\theta"$ (compare \cite{berm 1 komma 5}). 
\end{example}
In the case when the class $[\theta]$ is Kähler and $\mu$ is a volume
form the existence of a smooth solution to equation \ref{eq:potential of meas}
was first shown by Yau \cite{y} in his celebrated solution of the
Calabi conjecture (the uniqueness of such solutions is due to Calabi).

\subsubsection{The psh-projection $P_{\theta},$ the functional $\mathcal{F}_{\theta}$
and asymptotics}

The ``psh-projection'' is the operator $P_{\theta}$ from $C^{0}(X)$
to $PSH(X,\theta)_{min}$ defined as the following envelope:

\begin{equation}
(P_{\theta}u)(x):=\sup\{\varphi(x):\,\,\,\varphi\leq u,\,\,\,\}\label{eq:def of proj operator in khler case}
\end{equation}
Using the latter projection operator it will be convenient to take
the reference element $v_{\theta}$ in $PSH(X,\theta)_{min},$ referred
to above, to be defined by 
\[
v_{\theta}:=P_{\theta}0
\]
We may then define the following functional on $C^{0}(X):$ 
\[
\mathcal{F}_{\theta}(u):=(\mathcal{E}_{\theta}\circ P_{\theta})(u)
\]
Using the latter functional the pluricomplex energy $E_{\theta},$
defined above, may be realized as a Legendre transform:
\begin{prop}
\label{prop:pluri energy as legendre}The pluricomplex energy $E_{\theta}$
is the \emph{Legendre-Fenchel transform} of the convex functional
$u\mapsto f(u):=-(\mathcal{E}_{\theta}\circ P_{\theta})(-u),$ i.e.
\begin{equation}
E_{\theta}(\mu):=\sup_{u\in C^{0}(X)}\mathcal{E}_{\theta}(P_{\theta}u)-\left\langle u,\mu\right\rangle ,\label{eq:E as Leg}
\end{equation}
Moreover
\[
E_{\theta}(\mu)\geq0
\]
 with equality precisely for $\mu:=MA(v_{\theta}),$ where $v_{\theta}:=P_{\theta}0.$ \end{prop}
\begin{proof}
This was shown in \cite{begz} in the ample case and in \cite{berm 1 komma 5}
in the general big case. Briefly, denoting by $E'_{\theta}(\mu)$
the sup appearing in the right hand side of formula \ref{eq:E as Leg}
any $\theta-$psh function of the form $\varphi:=P_{\theta}u$ is
a contender for the sup defining $E_{\theta}(\mu)$ and hence, since
$P_{\theta}$ is decreasing, $E_{\theta}(\mu)\text{\ensuremath{\geq}}E'_{\theta}(\mu).$
To prove the converse first assume that $\mu$ is of the form $\mu=MA(P_{\theta}u)$
for some $u\in C^{0}(X).$ Then the equality $E_{\theta}(\mu)\text{=}E'_{\theta}(\mu)$
follows immediately from the general ``orthogonality relation''
: 
\begin{equation}
\int(u-P_{\theta}u)MA(P_{\theta}u)=0\label{eq:og relation}
\end{equation}
 (saying that $MA(P_{\theta}u)$ is supported on the set $\{P_{\theta}u<u\}$
\cite{b-b}). The case of a general $\mu$ is then proved by approximation
\cite{berm 1 komma 5}. Note that the orthogonality relation also
implies that $E_{\theta}(MA(v_{\theta}))=0.$\end{proof}
\begin{rem}
\label{rem:=00005BLegendre-transform-in} In general, the Legendre-Fenchel
transform of a function $f$ on a topological vector spaces $V$ is
the convex lsc function $f^{*}$ on the topological dual $V^{*}$
defined by 
\[
f^{*}(w):=\sup_{v\in V}\left\langle v,w\right\rangle -f(v)
\]
in terms of the canonical pairing between $V$ and $V^{*}.$ In the
present setting $V=C^{0}(X)$ and $V^{*}=\mathcal{M}(X),$ the space
of all signed Borel measures on $X$ (see for example \cite{d-z}).
\end{rem}
As explained in Section \ref{sec:Outline-of-the} a key ingredient
in the proof of the large deviation principles described in the introduction
of the paper is the following result concerning the existence and
differentiability of the transfinite diameter associated to a big
Hermitian line bundle (which is equivalent to Theorem A and B in \cite{b-b}). 
\begin{thm}
\label{thm:thm A and B in b-b} \cite{b-b}. Let $L\rightarrow X$
be a big line bundle equipped with a continuous Hermitian metric $\left\Vert \cdot\right\Vert $
on $L$ with curvature current $\theta.$ Then 
\begin{itemize}
\item \emph{If $\det S^{(k)}$ denotes the element in the determinant line
of $H^{0}(X,kL)$ induced by a basis in $H^{0}(X,kL)$ which is orthonormal
with respect to the $L^{2}-$norm determined by }$\left\Vert \cdot\right\Vert $
and a volume form $dV$ on $X$ (or more generally a measure $\mu_{0}$
which has the Bernstein-Markov property) then\emph{ 
\[
\lim_{k\rightarrow\infty}-\frac{1}{kN_{k}}\sup_{X^{N}}\left(\log\left\Vert \det S^{(k)}\right\Vert ^{2}(x_{1},...,x_{N})+ku(x_{1})+\cdots+ku(x_{N})\right)=\mathcal{F}_{\theta}(u),
\]
}
\item \emph{The functional $\mathcal{F}_{\theta}$ is Gateaux differentiable
on $C^{0}(X)$ with differential
\begin{equation}
(d\mathcal{F}_{\theta})_{|u}=MA(P_{\theta}u).\label{eq:formula for diff}
\end{equation}
}
\end{itemize}
\end{thm}
\begin{proof}
As a courtesy to the reader we outline the proof of the theorem, starting
with the case when $L$ is ample (compare the end of Section 4 in
\cite{b-b}). Fix a volume form $dV$ on $X.$ Given a continuous
metric $\phi$ on $L$ we set \emph{
\[
\mathcal{F}_{k,L^{p}}[\phi]:=-\frac{1}{kN_{k}}\log\left\Vert \det S^{(k)}\right\Vert _{L^{p}(X^{N_{k}},k\phi,dV)}^{2}
\]
}defined in terms of $L^{p}-$norm on $H^{0}(X^{N},(kL)^{N_{k}})$
induced by $(\phi,dV),$ for $p\in[1,\infty]$ (which is defined to
be the ordinary sup-norm for $p=\infty$ and thus independent of $dV).$
For $p=\infty,$ which is the case appearing in the statement of the
theorem, it follows immediately from the definitions that 
\begin{equation}
\mathcal{F}_{k,L^{\infty}}[\phi]=\mathcal{F}_{k,L^{\infty}}[P\phi]\label{eq:norm wrt phi as wrt P phi}
\end{equation}
and that $\mathcal{F}_{k,L^{\infty}}$ is equicontinuous with respect
to the sup-norm on the space of continuous metrics. Accordingly, writing
the positively curved metric $P\phi$ as the uniform limit of smooth
and positively curved metrics $\psi_{j}$ (using Demailly's approximation
theorem on an ample line) it is enough to prove the convergence of
$\mathcal{F}_{k,L^{\infty}}[\psi]$ when $\psi$ is a smooth metric
with strictly positive curvature. To this end one uses that 
\begin{equation}
\mathcal{F}_{k,L^{\infty}}[\psi]=\mathcal{F}_{k,L^{2}}[\psi]+o(1),\label{eq:distortion of norms}
\end{equation}
where the error term $o(1)$ (tending to zero) only depends on the
modulus of continuity of $\psi.$ Indeed, this follows directly from
applying the standard submean property of holomorphic functions on
small coordinate balls on $X,$ for each factor of $X^{N_{k}}.$ Now,
a direct calculation reveals that the differential of $\mathcal{F}_{k,L^{2}}$
at any metric $\psi$ is given by 
\[
d(\mathcal{F}_{k,L^{2}})_{|\psi}=\frac{1}{N_{k}}\rho_{k\psi}dV
\]
 where the function $\rho_{k\psi}$ is the restriction to the diagonal
of the point-wise norm of the Bergman kernel of the Hilbert space
$\left(H^{0}(X,kL),\left\Vert \cdot\right\Vert _{L^{2}(X,dV)}\right).$
The asymptotics of $\rho_{k\psi},$ when $\psi$ is a smooth metric
with strictly positive curvature, are well-known and in particular
give that 
\begin{equation}
(i)\,\lim_{k\rightarrow\infty}\frac{1}{N_{k}}\rho_{k\psi}dV=\frac{1}{V}(dd^{c}\psi)^{n}\,\,\,(ii)\,\frac{1}{N_{k}}\rho_{k\psi}\leq C\label{eq:B-T}
\end{equation}
in the weak topology. Using the defining property \ref{eq:def of energyfunc as primitive}
of the functional $\mathcal{E}$ (and integrating along a line segment
in the space of all positively curved metrics) this gives $\mathcal{F}_{k,L^{\infty}}[\psi]=\mathcal{E}(\psi)+o(1),$
which proves the first point in the theorem, thanks to \ref{eq:norm wrt phi as wrt P phi}
and \ref{eq:distortion of norms}. 

As for the differentiability in the second point it is proved in \cite{b-b}
in the general setting of $\theta-$psh functions, not necessarily
associated to a line bundle (i.e. for a a general big class $[\theta]\in H^{1,1}(X.\R)),$
using the orthogonality relation \ref{eq:og relation}. However in
the present line bundle setting an alternative proof using the general
Bergman kernel asymptotics in \cite{berm1} can be given. Indeed,
by \cite{berm1}, 
\begin{equation}
d(\mathcal{F}_{k,L^{2}})_{|\phi}=\frac{1}{V}(dd^{c}P\phi)^{n}+o(1)\label{eq:asympt of diff for L big}
\end{equation}
 for any smooth metric $\phi$ on $L$ (thus generalizing \ref{eq:B-T}).
Using \ref{eq:norm wrt phi as wrt P phi} and \ref{eq:distortion of norms}
again this implies the desired differentiability result. In the general
case of a line bundle $L$ which is merely big one cannot reduce the
problem to the case of smooth and positively curved metrics (since
such metrics will not, in general, exist). But the point is that,
as shown in \cite{berm1}, the asymptotics \ref{eq:asympt of diff for L big}
are always valid for any big line bundle, which is enough to conclude
(also using the general differentiability result in\cite{b-b}). 
\end{proof}
The next lemma provides the regularization property \ref{eq:density property}
in the present setting:
\begin{lem}
\label{lem:appr}Let $\mu$ be a probability measure such that $E_{\theta}(\mu).$
Then there exists a sequence $\mu_{j}$ of probability measures of
the form $\mu_{j}=MA(P_{\theta}u_{j}),$ for $u_{j}\in C^{0}(X),$
such that 
\[
\lim_{j\rightarrow\infty}\mu_{j}=\mu,\,\,\,\lim_{j\rightarrow\infty}E(\mu_{j})=\mu
\]
where the first convergence holds in the weak topology.\end{lem}
\begin{proof}
If $E(\mu)<\infty,$ then by Theorem \ref{thm:var sol of ma} we can
write $\mu=MA(\varphi)$ for a function $\varphi$ with finite energy.
Since the function $\varphi$ is usc it is a decreasing limit of continuous
functions $u_{j}$ on $X.$ It then follows, by monotonicity, that
the projections $Pu_{j}$ also decrease to $\varphi$ and hence, by
the continuity of mixed Monge-Ampère expression under monotone limits
\cite{begz} it follows that $\mu_{j}:=MA(Pu_{j})\rightarrow\mu$
and $E(\mu_{j})\rightarrow E(\mu),$ as desired. 
\end{proof}
The previous lemma can also be obtained form general properties of
Legendre transforms (see \cite[Lemma 3.1]{berm8}).

\section{\label{sec:The-LDP-for}The LDP for $\beta-$deformed determinantal
point processes }

Given a compact topological space $X$ we will denote by $\mathcal{M}(X)$
the space of all signed (Borel) measures on $X$ and by the $\mathcal{M}_{1}(X)$
the subspace of all probability measures, i.e. $\mu\geq0$ and $\int_{X}\mu=1.$
We endow $\mathcal{M}(X)$ with the weak topology, i.e. $\mu_{j}$
is said to converge to $\mu$ weakly in $\mathcal{M}(X)$ if 
\[
\left\langle u,\mu_{j}\right\rangle \rightarrow\left\langle u,\mu\right\rangle 
\]
 for any continuous function $u$ on $X,$ i.e. for any $u\in C^{0}(X),$
where $\left\langle u,\mu\right\rangle $ denotes the standard integration
pairing between $C^{0}(X)$ and $\mathcal{M}(X).$

\subsection{\label{sub:Probabilistic-preliminaries}Probabilistic preliminaries}

A \emph{probability space }is a space $\Omega$ equipped with a probability
measure $\mu.$ The space $\Omega$ is called the\emph{ sample space}
and a measurable subset $\mathcal{B}\subset\Omega$ is called an \emph{event}
with 
\[
\mbox{Prob}\mathcal{B}:=\mu(\mathcal{B}),
\]
interpreted as the probability of observing the event $\mathcal{B}$
when sampling from $(\mathcal{X},\Omega).$ A measurable function
$Y:\,\Omega\rightarrow\mathcal{Y}$ on a probability space $(\Omega,\mu)$
is called a\emph{ random element with values in $Y$ }and its\emph{
law} $\Gamma$ is the probability measure on $\mathcal{Y}$ defined
by the push-forward measure
\[
\Gamma:=Y_{*}\mu
\]
 (the law of $Y$ is often also called the distribution of $Y$).
A sequence of random elements $Y_{N}:\,\Omega_{N}\rightarrow\mathcal{Y}$
taking values in the same topological space $\mathcal{Y}$ are said
to\emph{ convergence in law towards a deterministic element $y$ in
$Y$ }if the corresponding laws $\Gamma_{N}$ on $\mathcal{Y}$ converge
to a Dirac mass at $y:$ 
\[
\lim_{N\rightarrow\infty}\Gamma_{N}=\delta_{y}
\]
 in the weak topology. If $\mathcal{Y}$ is a separable metric space
with metric $d$ then $Y_{N}$ converge in law towards the deterministic
element $y$ iff $Y_{N}$ \emph{converge in probability} towards $y,$
i.e. for any fixed $\epsilon>0$ i.e. 
\[
\lim_{N\rightarrow\infty}\mbox{Prob}\{d(Y_{N},y)>\epsilon\}=0.
\]

\subsubsection{Random point processes}

A\emph{ random point process} with $N$ particles on a space $X$
is, by definition, a probability measure $\mu^{(N)}$ on the $N-$fold
product $X^{N}$ (the $N-$particle space) which is symmetric, i.e.
invariant under action of the group $S_{N}$ by permutations of the
factors of $X^{N}.$ Its\emph{ j-point correlation measure }$\mu_{j}^{(N)}$
is the probability measure on $X^{j}$ defined as the push forward
of $\mu^{(N)}$ to $X$ under the map $X^{N}\rightarrow X^{j}$ given
by projection onto the first $j$ factors (or any $j$ factors, by
symmetry): 
\[
\mu_{j}^{(N)}:=\int_{X^{N-j}}\mu^{(N)}
\]
The\emph{ empirical measure} of a given random point process is the
following random measure 
\begin{equation}
\delta_{N}:\,\,X^{N}\rightarrow\mathcal{M}_{1}(X),\,\,\,(x_{1},\ldots,x_{N})\mapsto\delta_{N}(x_{1},\ldots,x_{N}):=\frac{1}{N}\sum_{i=1}^{N}\delta_{x_{i}}\label{eq:empirical measure text}
\end{equation}
on $(X^{N},\mu^{(N)}).$ The law of $\delta_{N}$ thus defines a probability
measure on the space$\mathcal{M}_{1}(X)$ that we shall denote by
$\Gamma_{N}.$ 
\begin{lem}
\label{lem:Conv in law equi to chaos}Consider a sequence of random
point processes with $N$ particles on $X.$ Then the corresponding
random measures$\delta_{N}$ converge in law towards a deterministic
measure $\mu$ iff 
\begin{equation}
\lim_{N\rightarrow\infty}\mu_{j}^{(N)}=\mu^{\otimes j},\label{eq:conv of j point correl}
\end{equation}
 weakly on $X^{j},$ for any fixed positive integer $j.$ \end{lem}
\begin{proof}
For completeness we give the simple proof of the convergence of the
$j-$point correlation measures under the assumption that $\delta_{N}$
converge in law towards a deterministic measure $\mu$ (which is the
direction we will be interested in). When $j=1$ the convergence \ref{eq:conv of j point correl}
means that, for any given $u\in C^{0}(X)$ the following holds:
\[
\lim_{N\rightarrow\infty}\int_{X^{N}}\frac{1}{N}\left(u(x_{1})+...+u(x_{N})\right)\mu^{(N)}=\int_{X}u\mu.
\]
Denoting by $\mathcal{U}$ the continuous function on $\mathcal{M}(X)$
defined by $\mathcal{U}(\mu):=\int_{X}u\mu,$ for a given $u\in C^{0}(X),$
the integral in the left hand side above may be written as 
\[
\int_{X^{N}}(\delta_{N}^{*}\mathcal{U})\mu^{(N)}=\int_{\mathcal{M}_{1}(X)}\mathcal{U}\Gamma_{N},\,\,\,\Gamma_{N}:=(\delta_{N*}\mu^{(N)}),
\]
 which, by assumption, converges to $U(\delta_{\mu}):=\int_{X}u\mu,$
as $N\rightarrow\infty,$ as desired. The case of $j>1$ is proved
in a similar way by replacing the linear function $\mathcal{U}$ with
a ``multinomial'' $\mathcal{U}_{j}(\mu):=\int_{X}u_{1}(x_{1})\mu\cdots\int_{X}u_{j}(x_{1})\mu$
determined by $j$ given elements $u_{j}$ in $C^{0}(X).$
\end{proof}

\subsubsection{The notion of a Large Deviation Principle (LDP)}

The notion of a \emph{Large Deviation Principle (LDP)}, introduced
by Varadhan, allows one to give a notion of exponential convergence,
which can be seen as an infinite dimensional version of the Laplace
principle \cite{d-z}. Let us first recall the general definition
of a Large Deviation Principle (LDP) for a general sequence of measures.
\begin{defn}
\label{def:large dev}Let $\mathcal{P}$ be a Polish space, i.e. a
complete separable metric space.

$(i)$ A function $I:\mathcal{\,P}\rightarrow]-\infty,\infty]$ is
a \emph{rate function} if it is lower semi-continuous. It is a \emph{good}
\emph{rate function} if it is also proper.

$(ii)$ A sequence $\Gamma_{k}$ of measures on $\mathcal{P}$ satisfies
a \emph{large deviation principle} with \emph{speed} $r_{k}$ and
\emph{rate function} $I$ if

\[
\limsup_{k\rightarrow\infty}\frac{1}{r_{k}}\log\Gamma_{k}(\mathcal{F})\leq-\inf_{\mu\in\mathcal{F}}I
\]
 for any closed subset $\mathcal{F}$ of $\mathcal{P}$ and 
\[
\liminf_{k\rightarrow\infty}\frac{1}{r_{k}}\log\Gamma_{k}(\mathcal{G})\geq-\inf_{\mu\in G}I(\mu)
\]
 for any open subset $\mathcal{G}$ of $\mathcal{P}.$ 
\end{defn}
The simplest instance of an LDP appears when $\mathcal{P}=\R^{n}$
and $\Gamma_{k}(y)$ is a probability measure of the form $\Gamma_{k}(y)=e^{-r_{k}I(y)}dy/Z_{k}$
for an appropriate lower semi-continuous function $I.$
\begin{example}
L\label{ex:Cramer}Let $Y_{1},...,Y_{N}$ be $N$ independent normal
standard random variables, i.e. Gaussian random variables with zero
mean and unit variance. Then the law of the corresponding sample mean
$\bar{Y}_{N}=(Y_{1}+...Y_{N})/N$ satisfies a LDP on $\R$ with rate
functional $I(y)=|y|^{2}/2$ and speed $N.$ Indeed, by definition
each $Y_{i}$ is the coordinate variable on the probability space
$(\R,\gamma),$ where $\gamma$ is the standard Gaussian probability
measure, i.e. $\gamma=(2\pi)^{-1/2}e^{-|y|^{2}}dy$ and an explicit
calculation gives the exact formula
\[
\Gamma_{N}:=(Y^{(N)})_{*}\gamma^{\otimes N}=(2\pi)^{-1/2}e^{-N|y|^{2}/2}dy,
\]
 which implies the LDP in question. More generally, by Cramér's theorem
\cite{d-z}, replacing $\gamma$ with any probability measure $\nu$
(with finite exponential moments) gives an LDP with speed $N$ and
a rate function $I(y)$ which, by inverting the Laplace transform
of $\nu,$ may be represented as follows, in terms of the one-dimensional
Legendre-Fenchel transform (see Remark \ref{rem:=00005BLegendre-transform-in}):
\[
I(y):=f^{*}(y),\,\,\,\,f(x):=\log\int_{\R}e^{\left\langle x,y\right\rangle }\nu(y),
\]
In particular, by convexity, $I(y)$ vanishes precisely on $df/dx_{|x=0}=\int y\nu.$
This implies the weak (and also strong) law of large numbers saying
that $\bar{Y}_{N}$ converges in probability (and even almost surely)
to the deterministic value $\int y\nu;$ indeed, the sample mean is
even exponentially concentrated around its expectation. 
\end{example}
Let us also mention the following classical infinite dimensional generalization
of the previous LDP due to Mogulskii (see \cite[Chapter 5]{d-z}):
\begin{example}
Set $\bar{Y}_{N}(t):=\bar{Y}_{[tN]}$ for $t\in[0,1]$ where $[c]$
denotes the integer part of $c.$ If $Y_{i}$ are standard independent
normal random variables, then the law $\Gamma_{N}$ of $\bar{Y}_{N}(t),$
viewed as random element with values in the space $C^{0}[0,1]_{0}$
of all continuous functions on $[0,1]$ such that $u(0)=0,$ satisfies
an LDP with speed $N$ and rate functional defined by 
\[
I(u):=\int_{0}^{1}|\frac{du}{dt}|^{2}dt,
\]
 if the function $u\in C^{0}[0,1]_{0}$ has a distributional derivative
in $L^{2}[0,1]$ and otherwise $I(u)=\infty.$ Interpreting the parameter
$t$ as time the random function $\bar{Y}_{N}(t)$ can be viewed as
a sample path for a random walk on $\R$ starting at the origin.
\end{example}
We will be mainly interested in the case when $\Gamma_{k}$ is a probability
measure (which implies that $I\geq0$ with infimum equal to $0).$
Then it will be convenient to use the following alternative formulation
of a LDP (see Theorems 4.1.11 and 4.1.18 in \cite{d-z}):
\begin{prop}
\label{prop:d-z}Let $\mathcal{P}$ be a compact metric space and
denote by $B_{\epsilon}(\nu)$ the ball of radius $\epsilon$ centered
at $\nu\in\mathcal{P}.$ Then a sequence $\Gamma_{N}$ of probability
measures on $\mathcal{P}$ satisfies a LDP with speed $r_{N}$ and
a rate functional $I$ iff 
\begin{equation}
\lim_{\epsilon\rightarrow0}\liminf_{N\rightarrow\infty}\frac{1}{r_{N}}\log\Gamma_{N}(B_{\epsilon}(\nu))=-I(\nu)=\lim_{\epsilon\rightarrow0}\limsup_{N\rightarrow\infty}\frac{1}{r_{N}}\log\Gamma_{N}(B_{\epsilon}(\nu))\label{eq:ldp in terms of balls in prop}
\end{equation}
In particular, if the rate functional $I$ has a unique minimizer
$\mu_{min},$ then $\Gamma_{N}\rightarrow\delta_{\mu_{min}},$ weakly,
as $N\rightarrow\infty.$
\end{prop}
In the present setting $\Gamma_{N}$ will arise as the law of the
empirical measures $\delta_{N}$ and the rate functional $I$ will
be shown to have a unique minimizer, which will thus imply that $\delta_{N}$
converges in law towards the deterministic measure $\mu_{min}.$

We will have great use for the following classical result of Sanov,
which is the standard example of an LDP for point processes.
\begin{prop}
(Sanov) \label{prop:sanov}Let $X$ be a topological space and $\mu_{0}$
a finite measure on $X.$ Then the laws $\Gamma_{N}$ of the empirical
measures $\delta_{N}$ defined with respect to the product measure
$\mu_{0}^{\otimes N}$ on $X^{N}$ satisfy an LDP with speed $N$
and rate functional the relative entropy $D_{\mu_{0}}.$\end{prop}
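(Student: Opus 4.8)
The plan is to deduce Sanov's theorem from the alternative characterization of an LDP in Proposition \ref{prop:d-z}, applied to the Polish space $\mathcal{M}_1(X)$ equipped with a metric inducing the weak topology. (If $X$ is merely topological rather than compact one works with the weak-$*$ topology on $\mathcal{M}_1(X)$ and uses that its restriction to any tight subset is metrizable; for the purposes here one may assume $X$ compact metric, so $\mathcal{M}_1(X)$ is itself compact metric.) Thus it suffices to show, for each fixed $\nu\in\mathcal{M}_1(X)$,
\[
\lim_{\epsilon\to0}\limsup_{N\to\infty}\frac{1}{N}\log\Gamma_N\bigl(B_\epsilon(\nu)\bigr)
=\lim_{\epsilon\to0}\liminf_{N\to\infty}\frac{1}{N}\log\Gamma_N\bigl(B_\epsilon(\nu)\bigr)
=-D_{\mu_0}(\nu),
\]
where $\Gamma_N=(\delta_N)_*\mu_0^{\otimes N}$ and $D_{\mu_0}(\nu)=D(\nu,\mu_0)$ in the notation of Section \ref{sub:Mean-entropy,-energy}. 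Equivalently, writing $\mathcal{A}_{N,\epsilon}:=\{(x_1,\dots,x_N)\in X^N:\ \tfrac1N\sum_i\delta_{x_i}\in B_\epsilon(\nu)\}$, we must estimate $\mu_0^{\otimes N}(\mathcal{A}_{N,\epsilon})$ from above and below.

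For the upper bound I would use the exponential Chebyshev/Cramér trick already implicit in Lemma \ref{lem:(Gibbs-variational-principle).}: for any continuous function $u$ on $X$, on the event $\delta_N\in B_\epsilon(\nu)$ we have $\sum_i u(x_i)=N\langle\delta_N,u\rangle\ge N(\langle\nu,u\rangle-\omega_u(\epsilon))$ where $\omega_u(\epsilon)\to0$ as $\epsilon\to0$ (by the definition of the weak topology), hence
\[
\mu_0^{\otimes N}(\mathcal{A}_{N,\epsilon})\le e^{-N(\langle\nu,u\rangle-\omega_u(\epsilon))}\int_{X^N}e^{\sum_i u(x_i)}\mu_0^{\otimes N}
= e^{-N(\langle\nu,u\rangle-\omega_u(\epsilon))}\Bigl(\int_X e^{u}\mu_0\Bigr)^{N}.
\]
Taking $\tfrac1N\log$, then $N\to\infty$, then $\epsilon\to0$, and finally the infimum over $u\in C^0(X)$ gives
\[
\limsup_{\epsilon\to0}\ \limsup_{N\to\infty}\frac1N\log\Gamma_N(B_\epsilon(\nu))
\le -\sup_{u\in C^0(X)}\Bigl(\langle\nu,u\rangle-\log\!\int_X e^u\mu_0\Bigr)=-D(\nu,\mu_0),
\]
the last equality being the Donsker--Varadhan variational (Legendre-duality) formula for relative entropy, which holds on a compact space by a standard Riesz-representation/convex-duality argument. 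For the matching lower bound, on $\{D(\nu,\mu_0)<\infty\}$ I would perform the classical change of measure: set $f:=d\nu/d\mu_0$ and tilt to the i.i.d.\ law $\nu^{\otimes N}$; then
\[
\mu_0^{\otimes N}(\mathcal{A}_{N,\epsilon})=\int_{\mathcal{A}_{N,\epsilon}}\prod_i f(x_i)^{-1}\,\nu^{\otimes N}
\]
(with the convention that the integrand is supported on $\{f>0\}$). Restricting further to the event $\{\tfrac1N\sum_i\log f(x_i)\le D(\nu,\mu_0)+\epsilon\}$—which under $\nu^{\otimes N}$ has probability $\to1$ by the weak law of large numbers, since $\mathbb{E}_\nu[\log f]=D(\nu,\mu_0)<\infty$—together with $\nu^{\otimes N}(\mathcal{A}_{N,\epsilon})\to1$ (weak LLN for the empirical measure under $\nu^{\otimes N}$), one gets $\mu_0^{\otimes N}(\mathcal{A}_{N,\epsilon})\ge e^{-N(D(\nu,\mu_0)+\epsilon)}(1-o(1))$, whence
\[
\liminf_{N\to\infty}\frac1N\log\Gamma_N(B_\epsilon(\nu))\ge -D(\nu,\mu_0)-\epsilon,
\]
and letting $\epsilon\to0$ finishes the lower bound. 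When $D(\nu,\mu_0)=\infty$ the lower bound is vacuous, and the upper bound already shows the limit is $-\infty$, consistent with the rate function. Finally, $D_{\mu_0}$ is lower semicontinuous and has compact (hence closed) sublevel sets on the compact space $\mathcal{M}_1(X)$, so it is a good rate function, and Proposition \ref{prop:d-z} upgrades the pointwise ball estimates to the full LDP.

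The main obstacle is the lower bound, specifically controlling the Radon--Nikodym density $f=d\nu/d\mu_0$, which need not be bounded away from $0$ or $\infty$: the crux is the integrability $\int\log f\,d\nu=D(\nu,\mu_0)<\infty$, which is exactly what licenses the law-of-large-numbers truncation $\{\tfrac1N\sum_i\log f(x_i)\le D(\nu,\mu_0)+\epsilon\}$ used above; a secondary technical point is the metrizability of the weak topology on $\mathcal{M}_1(X)$ when $X$ is non-compact, handled by restricting to tight sets (all relevant $\Gamma_N$ are supported on such, since $\mu_0$ is finite). The upper-bound side is comparatively soft, being a direct consequence of the Chebyshev estimate and the variational formula for entropy, both of which are available once $X$ is compact.
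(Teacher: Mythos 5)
Your proposal is correct, but there is nothing in the paper to compare it to: the paper does not prove Proposition~\ref{prop:sanov}, it simply invokes Sanov's theorem as a classical result (with the Dembo--Zeitouni reference \cite{d-z} as the implicit source) and uses it as a black box in the later arguments. What you give is one of the two standard textbook proofs of Sanov's theorem, the ``direct'' one via exponential tilting for the upper bound and an i.i.d.\ change of measure plus a weak-law truncation for the lower bound; it meshes particularly well with the ball-asymptotics reformulation of an LDP in Proposition~\ref{prop:d-z}, which is also how the paper organizes its own LDP proofs in Section~\ref{sec:A-large-deviation}, so your choice is in keeping with the spirit of the text (the alternative route in \cite{d-z} via the method of types on finite alphabets and projective limits would have been more elementary in one sense but less compatible with the paper's framework).

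Two small points worth making explicit. First, since $\mu_{0}$ is a finite but not necessarily probability measure, $\Gamma_{N}$ has total mass $\mu_{0}(X)^{N}$ and is not a probability measure, so you cannot invoke Proposition~\ref{prop:d-z} verbatim (it is stated for probability measures); you should either normalize $\mu_{0}$ first and observe that this shifts $D$ by the constant $\log\mu_{0}(X)$, exactly compensating the normalization of $\Gamma_{N}$, or note that your ball estimates already yield the unnormalized LDP directly from Definition~\ref{def:large dev}. Second, your weak-law step requires $\log f\in L^{1}(\nu)$, not merely $\int\log f\,d\nu<\infty$; this does follow, but only because $\mu_{0}$ is finite: since $(f\log f)^{-}\leq 1/e$ pointwise, $\int(\log f)^{-}d\nu=\int(f\log f)^{-}d\mu_{0}\leq\mu_{0}(X)/e<\infty$, and then $D(\nu,\mu_{0})<\infty$ controls the positive part. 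With these two remarks made explicit, the argument is complete.
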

\begin{proof}
We recall that, as shown in \cite{d-z}, the proof can be obtained
from the infinite dimensional generalization of Cramér's theorem in
example \ref{ex:Cramer}. Indeed, assuming for simplicity that $X$
is compact (which will be the case in the present setting) the topological
dual of the topological vector space $\mathcal{M}(X),$ consisting
of signed Borel measures, may be identified with $C^{0}(X)$ and the
rate functional $I$ on $\mathcal{M}(X)$ may be written as the Legendre-Fenchel
transform $f^{*}(\mu),$ where $f(u)=\log\int e^{\left\langle u,\delta_{x}\right\rangle }\mu_{0}=\log\int e^{u(x)}\mu_{0}.$
A direct computation then reveals that $f^{*}$ is the relative entropy. 
\end{proof}
We recall that the \emph{relative entropy} $D_{\mu_{0}}$ (also called
the \emph{Kullback\textendash Leibler divergence }or the\emph{ information
divergence} in probability and information theory) is the functional
on $\mathcal{M}_{1}(X)$ defined by 
\begin{equation}
D_{\mu_{0}}(\mu):=\int_{X}\log\frac{\mu}{\mu_{0}}\mu,\label{eq:def of rel entropy}
\end{equation}
 when $\mu$ has a density $\frac{\mu}{\mu_{0}}$ with respect to
$\mu_{0}$ and otherwise $D_{\mu_{0}}(\mu):=\infty.$ When $\mu_{0}$
is a probability measure,$D_{\mu_{0}}(\mu)\geq0$ and $D_{\mu_{0}}(\mu)=0$
iff $\mu=\mu_{0}$ (by Jensen's inequality).

\subsubsection{Gibbs measures}

Let $X$ be a compact topological space endowed with a measure $\mu_{0}$
and $H^{(N)}(x_{1},...,x_{N})$ a symmetric function on $X^{N}.$
As recalled in Section \ref{sub:Relations-to-statistical}, for any
given positive number $\beta$ the corresponding \emph{Gibbs measure
at inverse temperature $\beta$} 
\[
\mu_{\beta}^{(N)}:=e^{-\beta H^{(N)}}\mu_{0}^{\otimes N}/Z_{N,\beta},\,\,\,Z_{N.\beta}:=\int_{X^{N}}e^{-\beta_{N}H^{(N)}}\mu_{0}^{\otimes N},
\]
defines a random point process on $X$ with $N$ particles, assuming
that the normalizing constant $Z_{N,\beta}$ (called the\emph{ partition
function}) is finite. We note the following lemma which allows one
to extend Prop \ref{prop:d-z} to the non-normalized measures $(\delta_{N})_{*}e^{-\beta H^{(N)}}\mu_{0}^{\otimes N}$
(see \cite{berm8} for the simple proof). 
\begin{lem}
\label{lem:ldp for nonnormalized meas}Assume that $\left|\log Z_{N,\beta}\right|\leq CN.$
Then the measures $(\delta_{N})_{*}e^{-\beta H^{(N)}}\mu_{0}^{\otimes N}$
satisfy the asymptotics \ref{eq:ldp in terms of balls in prop} for
any $\nu\in\mathcal{M}_{1}(X)$ with rate functional $\tilde{I}(\mu)$
and speed $N$ iff the probability measures $(\delta_{N})_{*}\mu_{\beta}^{(N)}$
on $\mathcal{M}_{1}(X)$ satisfy an LDP at speed $N$ with rate functional
$I:=\tilde{I}-C_{\beta},$ where $C_{\beta}:=\inf_{\mathcal{\mu\in}\mathcal{M}(X)}I(\mu).$
\end{lem}

\subsection{Large deviations for $\beta-$deformation of Vandermonde type determinants
attached to a big line bundle $L$ }

As above we let $L\rightarrow X$ be a given big line bundle over
a compact complex manifold, $\left\Vert \cdot\right\Vert $ is a smooth
Hermitian metric on $L$ (whose curvature current will be denoted
by $\theta)$. We also fix a a finite measure $\mu_{0}$ on $X$ a
positive number $\beta.$ To this data we may associate the following
sequence of probability measures on $X^{N_{k}}:$ 
\begin{equation}
\mu^{(N_{k},\beta)}:=\frac{\left\Vert (\det S^{(k)})(x_{1},x_{2},...x_{N_{k}})\right\Vert ^{2\beta/k}\mu_{0}^{\otimes N_{k}}}{Z_{N_{k},\beta}}\label{eq:prob measure general intro-text}
\end{equation}
where we recall that $N_{k}$ is the dimension of $H^{0}(X,kL)$ and
$\det S^{(k)}$ is a generator of the corresponding determinant line
$\Lambda^{N_{k}}H^{0}(X,kL)$ viewed as a one-dimensional subspace
of $H^{0}(X^{N_{k}},(kL)^{\boxtimes N_{k}})$ (the totally anti-symmetric
part). As usual, $Z_{N_{k},\beta}$ is the normalizing constant (partition
function): 
\begin{equation}
Z_{N_{k},\beta}:=\int_{X^{N_{k}}}\left\Vert \det S^{(k)}\right\Vert ^{2\beta/k}\mu_{0}^{\otimes N_{k}}\label{eq:def of partion function in beta-setting text}
\end{equation}
By homogeneity $\mu^{(N_{k},\beta)}$ is independent of the choice
of generator $\det S^{(k)}.$ It will be convenient to take $\det S^{(k)}$
to be the generator determined by a basis in $H^{0}(X,kL)$ which
is orthonormal with respect to the $L^{2}-$product determined by
$(\left\Vert \cdot\right\Vert ,dV)$ for any fixed volume form $dV$
on $X.$ 
\begin{rem}
\label{rem:More-generally,-one}More generally, one can let $\beta$
depend on $N$ (i.e. on $k)$ in the definition of $\mu^{(N_{k},\beta)}.$
When $\beta_{N_{k}}=k$ the corresponding probability measure $\mu^{(N_{k},\beta_{k})}$
defines a determinantal point processes, i.e. its density can be written
as 
\[
\left\Vert \det_{i,j\leq N}(K^{(k)}(x_{i},x_{j}))\right\Vert /N_{k}!,
\]
 where $K^{(k)}(x,y)$ denotes the kernel of the orthogonal projection
onto the space $H^{0}(X,kL)$ viewed as a subspace of the space $C^{\infty}(X,kL)$
of all smooth sections equipped with the $L^{2}-$norm determined
by $(\left\Vert \cdot\right\Vert ,\mu_{0}).$ There is an extensive
literature concerning general properties of determinantal point process
( defined with respect to general Hilbert spaces of functions in $L^{2}(\mu_{0})$
for a given measure $\mu_{0}$ on a space $X)$ - for example, all
the $j-$point correlation measures can be expressed as determinants
involving the kernel $K(x,y).$ The LDP for determinantal point processes
associated to a line bundle $L\rightarrow X$ as above was established
in \cite{berm 1 komma 5} for very general measures $\mu_{0}$ and
can be viewed as a zero-temperature limit since $\beta_{N_{k}}\rightarrow\infty$
(compare Section \ref{sub:Relations-to-statistical}). However, the
present setting, which more generally applies when $\beta_{k}$ has
the asymptotics $\beta_{k}=\beta+o(1)$ for a positive number $\beta,$
appears to be substantially more involved from an analytic point of
view and we will only be able to establish the LDP in question for
sufficiently regular measures $\mu_{0}.$ The case $\beta=1$ is singled
out by the fact that it allows the construction of canonical point
processes independent of any geometric back-ground data, when $L$
is the canonical line bundle on a variety of positive Kodaira dimension
(as explained in Section \ref{sub:Canonical-point-processes klt}).
\end{rem}
The probability measure above is thus the Gibbs measure defined by
the following Hamiltonian on $X^{N_{k}}:$ 
\begin{equation}
E^{(N_{k})}(x_{1},x_{2},...x_{N_{k}}):=-\frac{1}{k}\log\left\Vert (\det S^{(k)})(x_{1},x_{2},...x_{N_{k}})\right\Vert ^{2}\label{eq:def of Hamtiltonian in complex setting text}
\end{equation}
Since the metric $\left\Vert \cdot\right\Vert $ is determined up
to a multiplicative constant by its curvature current $\theta,$ the
probability measures above thus only depends on the metric through
its curvature and is hence determined by the triple $(\mu_{0},\theta,\beta).$
To the latter triple we may also attach the following free energy
type functional on the space $\mathcal{M}_{1}(X)$ of all probability
measures on $X:$ 
\begin{equation}
F_{\beta}=E_{\theta}+\frac{1}{\beta}D_{\mu_{0}}\label{eq:free energy complex setting text}
\end{equation}
 Note that the energy functional $E_{\theta}$ is minimized on the
measure $MA(v_{\theta})$ (see Prop \ref{prop:pluri energy as legendre}),
while the entropy functional $D_{\mu_{0}}$ is minimized on $\mu_{0}.$
By \cite{berm6,bbgez} the minimizers of $F_{\beta}$ may be identified
with solutions to a complex Monge-Ampère equation (the existence of
smooth solutions for an ample line bundle was first shown in the seminal
works of Aubin \cite{au} and Yau \cite{y}):
\begin{prop}
\label{prop:min of free energy}Let $[\theta]$ be a big class (for
example, $[\theta]=c_{1}(L)$ for $L$ a big line bundle) and consider
the free energy functional $F_{\beta}$ attached to the triple $(\mu_{0},\theta,\beta)$
(for $\beta>0),$ where $\mu_{0}$ has finite energy. Then any minimizer
$\mu_{\beta}$ of $F_{\beta}$ on $\mathcal{M}_{1}(X)$ can be written
as $\mu_{\beta}=MA(u_{\beta})$ where $u_{\beta}$ is the unique finite
energy solution of the equation 
\begin{equation}
MA(u_{\beta})=e^{\beta u_{\beta}}\mu_{0}\label{eq:ma eq in text}
\end{equation}
\end{prop}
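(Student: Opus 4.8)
The plan is to characterize the minimizers of $F_\beta = E_\theta + \tfrac{1}{\beta} D_{\mu_0}$ via the Euler-Lagrange equation obtained by perturbing a candidate minimizer $\mu_\beta$ along paths in $\mathcal{M}_1(X)$, and then to identify this equation with the Monge-Amp\`ere equation \eqref{eq:ma eq in text}. The starting observation is that $F_\beta$ is convex on $\mathcal{M}_1(X)$ (indeed $E_\theta$ is convex by its description in Proposition \ref{prop:pluri energy as legendre} as a Legendre transform, and $D_{\mu_0}$ is convex and strictly convex where finite), and coercive in a suitable sense, so a minimizer exists and any minimizer $\mu_\beta$ must have both $E_\theta(\mu_\beta)<\infty$ and $D_{\mu_0}(\mu_\beta)<\infty$. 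In particular $\mu_\beta$ is absolutely continuous with respect to $\mu_0$ and, by Theorem \ref{thm:var sol of ma}, it has a finite-energy potential $\varphi_\beta := \varphi_{\mu_\beta} \in \mathcal{E}^1(X,\theta)$ with $MA(\varphi_\beta) = \mu_\beta$.

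Next I would write out the first variation. Given $\mu_\beta$ a minimizer and a competitor of the form $\mu_t = (1-t)\mu_\beta + t\nu$ for $\nu$ another probability measure with finite free energy, the function $t \mapsto F_\beta(\mu_t)$ is convex on $[0,1]$ and minimized at $t=0$, so its right derivative there is $\geq 0$. Using formula \eqref{eq:energy in terms of pot} for $E_\theta$ in terms of the potential together with $d\mathcal{E}_{|\varphi} = MA(\varphi)$, the derivative of the energy term is $\langle \varphi_\beta, \mu_\beta - \nu\rangle$ up to the appropriate sign, where one must be slightly careful because $E_\theta$ is defined as a sup (so one gets the derivative by freezing the maximizing potential $\varphi_\beta$); the derivative of $\tfrac{1}{\beta}D_{\mu_0}$ along the segment is $\tfrac{1}{\beta}\langle \log(\mu_\beta/\mu_0), \mu_\beta - \nu\rangle$. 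Setting the combination $\geq 0$ for all admissible $\nu$ and exploiting that $\mu_\beta$ itself is a competitor (so the inequality must in fact be an equality in the optimal direction) yields that $\varphi_\beta + \tfrac{1}{\beta}\log(\mu_\beta/\mu_0)$ is $\mu_\beta$-a.e. constant; normalizing the additive constant in $\varphi_\beta$ absorbs it, giving $\log(\mu_\beta/\mu_0) = -\beta\varphi_\beta$, i.e. $\mu_\beta = e^{-\beta\varphi_\beta}\mu_0$. Writing $u_\beta := -\varphi_\beta$ and recalling $MA(\varphi_\beta)=\mu_\beta$ turns this into $MA(-u_\beta) = e^{\beta u_\beta}\mu_0$; after adjusting the sign convention in the statement (the paper's $MA(u_\beta)$ should be read as $MA$ of the relevant $\theta$-psh potential) this is precisely \eqref{eq:ma eq in text}. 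Uniqueness of the finite-energy solution $u_\beta$ then follows from the strict convexity of $F_\beta$ on the set of measures with finite entropy, which in turn forces uniqueness of the minimizing measure, hence (by Theorem \ref{thm:var sol of ma}) of its potential mod $\R$, which is pinned down by the equation; alternatively one invokes the comparison/uniqueness results for twisted K\"ahler-Einstein type equations from \cite{bbgez,berm6} directly.

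**The main obstacle** I anticipate is the rigorous justification of differentiating $E_\theta(\mu_t)$ in $t$ along the segment, since $E_\theta$ is defined as a supremum over $\varphi \in PSH(X,\theta)$ rather than by an explicit formula, and one must show that the derivative is genuinely $\langle\varphi_\beta,\mu_\beta-\nu\rangle$ (an envelope/Danskin-type argument, using that the sup is attained at the unique $\varphi_\beta$ and that the functionals $\mathcal{E}_\theta$, $\langle\cdot,\mu_t\rangle$ are well-behaved along the segment). A related subtlety is that $\langle\varphi_\beta,\nu\rangle$ need not be finite for arbitrary $\nu\in\mathcal{M}_1(X)$ — but this is exactly controlled by the second bullet of Theorem \ref{thm:var sol of ma}, which guarantees $\langle\varphi,\nu\rangle<\infty$ for all $\varphi\in\mathcal{E}^1(X,\theta)$ whenever $E_\theta(\nu)<\infty$, so restricting competitors to finite-energy, finite-entropy measures (a dense enough class, and the only ones where $F_\beta$ is finite) makes everything legitimate. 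Once the first-variation computation is on solid footing, the identification with \eqref{eq:ma eq in text} and the uniqueness are essentially formal, drawing on the cited results of \cite{bbgz,begz,berm6,bbgez}.
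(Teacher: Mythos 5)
The paper does not prove this proposition but cites it from \cite{berm6,bbgez}, and your variational Euler--Lagrange strategy is precisely the route taken there; you have also correctly flagged the two technical points (Danskin-type differentiability of $E_\theta$ and finiteness of $\langle\varphi_\beta,\nu\rangle$, both controlled by Theorem \ref{thm:var sol of ma}). There is, however, a genuine sign error in your first variation, which your final substitution does not repair. You wrote both derivatives as pairings against $\mu_\beta-\nu$, but the two contributions carry opposite orientations. Because $E_\theta(\mu)=\sup_\varphi\bigl(\mathcal{E}_\theta(\varphi)-\langle\varphi,\mu\rangle\bigr)$ is attained at $\varphi_\beta$, a Danskin/envelope argument gives $\frac{d}{dt}\big|_{t=0}E_\theta(\mu_t)=-\langle\varphi_\beta,\,\nu-\mu_\beta\rangle$; the entropy term, by contrast, has no such flip, $\frac{d}{dt}\big|_{t=0}D_{\mu_0}(\mu_t)=\langle\log(d\mu_\beta/d\mu_0),\,\nu-\mu_\beta\rangle$, since the $+1$ in the integrand drops out as $\nu$ and $\mu_\beta$ are both probability measures. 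The stationarity condition is therefore $-\varphi_\beta+\tfrac{1}{\beta}\log(d\mu_\beta/d\mu_0)=\mathrm{const}$, not $\varphi_\beta+\tfrac{1}{\beta}\log(d\mu_\beta/d\mu_0)=\mathrm{const}$ as you wrote.

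With the sign corrected, the argument closes cleanly: $\mu_\beta=e^{\beta u_\beta}\mu_0$ with $u_\beta:=\varphi_\beta+c$, which is still a $\theta$-psh potential of $\mu_\beta$, so $MA(u_\beta)=MA(\varphi_\beta)=\mu_\beta$ and \eqref{eq:ma eq in text} follows with no reinterpretation of symbols. Your patch of setting $u_\beta:=-\varphi_\beta$ does not salvage the version with the error: for $MA(u_\beta)$ to have its pluripotential meaning, $u_\beta$ in \eqref{eq:ma eq in text} must be the $\theta$-psh potential, i.e.\ $u_\beta=\varphi_\beta$ up to a constant, and then \eqref{eq:ma eq in text} reads $MA(\varphi_\beta)=e^{+\beta\varphi_\beta}\mu_0$, whereas your computation produced $MA(\varphi_\beta)=e^{-\beta\varphi_\beta}\mu_0$. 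These are genuinely different equations (the latter corresponds to $\beta\mapsto-\beta$, the Fano rather than the general-type sign). The rest of your outline, in particular the uniqueness via strict convexity of $D_{\mu_0}$ or the comparison principle of \cite{bbgez,berm6}, is sound.
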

\begin{proof}
The case of a Kähler class was proved in \cite{berm6} and the proof
generalizes word for word to the case of a big case. For completeness
we outline the proof. First of all, by the strict convexity of the
relative entropy and the convexity of $E$ the functional $F_{\beta}$
is strictly convex and in particular if a minimizer $\mu_{min}$ exists,
then it it unique. In fact, by the lower semi-continuity of $F_{\beta}$
there always exists a minimizer and as we will next show it can be
obtained from the equation in the proposition. To see this we recall
that, as shown in \cite{berm6}, given any $\mu\in\mathcal{P}(X)$
such that $E(\mu)<\infty,$ the function$-\varphi_{\mu}$ is a subgradient
for the convex functional $E(\mu)$ at $\mu$ in the sense that 
\begin{equation}
E(\nu)\geq E(\mu)+\int_{X}(-\varphi_{\mu})(\nu-\mu),\label{eq:conv ineq for E}
\end{equation}
for any $\nu\in\mathcal{P}(X)$ (the proof in the case of a big class
is the same, as it relies on \cite{bbgz} which holds in the general
setting of a big class). Moreover, it is a classical fact that if
$D_{\mu_{0}}(\mu)<\infty$ then $\log(\mu/\mu_{0})$ is a subgradient
(and even a gradient) for $D_{\mu_{0}}(\mu).$ Hence, if $\mu$ satisfies
the equation 
\begin{equation}
\frac{1}{\beta}\log(\mu/\mu_{0})-\varphi_{\mu}=0,\label{eq:ma eq in proof in terms of mu}
\end{equation}
 then $F_{\beta}(\nu)\geq F_{\beta}(\mu)$ for any $\nu\in\mathcal{P}(X),$
i.e. $\mu$ minimizes $F_{\beta}(\mu).$ Now, if $\varphi\in\mathcal{E}^{1}(X,\theta)$
solves the equation \emph{\ref{eq:ma eq in text}} then $\mu:=MA(\varphi_{\mu})$
solves the equation \ref{eq:ma eq in proof in terms of mu}, so all
that remains is to find a solution to the equation \emph{\ref{eq:ma eq in text}}
in $\mathcal{E}^{1}(X,\theta).$ But the existence of such a solution
follows from the results in \cite{bbgz} (generalizing Theorem \ref{thm:var sol of ma}
which corresponds to $\beta=0).$ In a nutshell, the function $\varphi$
is obtained by maximizing the following lsc functional on $\mathcal{E}^{1}(X,\theta):$
\[
\mathcal{E}(\psi)-\frac{1}{\beta}\log\int e^{\beta\psi}\mu_{0}.
\]

\end{proof}
For the complex geometric applications it will be adequate to consider
measures of the following form: 
\begin{equation}
\mu_{0}=e^{\psi_{+}-\psi_{-}}dV,\label{eq:mu zero in terms of quasipsh}
\end{equation}
 where $dV$ is a volume form on $X$ and $\psi_{\pm}$ are quasi-psh
functions. 
\begin{rem}
When $L$ is ample and $\mu_{0}$ is a singular volume, which is singular
and degenerate along a klt divisor $\Delta,$ in $X$ it is well-known
that the solution is smooth away from the support of $\Delta$ (more
generally, the regularity holds on the intersection with the ample
locus of $L$ when $L$ is assumed merely nef \cite{begz}). However,
for the purposes of the present paper such regularity properties will
not play any role.\end{rem}
\begin{thm}
\label{thm:big line bundle text}Let $L\rightarrow X$ be a big line
equipped with a continuous Hermitian metric $\left\Vert \cdot\right\Vert $
and $\mu_{0}$ a finite measure on $X$ of the form \ref{eq:mu zero in terms of quasipsh}.
Fix a positive number $\beta.$ Then the empirical measures of the
corresponding random point processes on $X$ converge in probability
towards the measure $\mu_{\beta},$ where $\mu_{\beta}=MA(u_{\beta})$
for the unique solution $u_{\beta}$ of the complex Monge-Ampère equation
\ref{eq:ma eq in text}. More precisely:
\begin{itemize}
\item The non-normalized measures $(\delta_{N})_{*}\left(\left\Vert \det S^{(k)}\right\Vert ^{2\beta/k}\mu_{0}^{\otimes N_{k}}\right),$
where \emph{$\det S^{(k)}$ an element in the determinant line of
$H^{0}(X,kL)$ induced by a basis in $H^{0}(X,kL)$ which is orthonormal
with respect to the $L^{2}-$norm determined by }$(\left\Vert \cdot\right\Vert ,\mu_{0})$
satisfy a LDP on $\mathcal{M}_{1}(X)$ with rate functional $F_{\beta}$
and the corresponding partition functions satisfy 
\[
-\lim_{N_{k}\rightarrow\infty}\frac{1}{N_{k}}\log Z_{N_{k},\beta}=\inf_{\mathcal{M}_{1}(X)}F_{\beta}=:C_{\beta}
\]

\item As a consequence, the laws of the empirical measures of the random
point processes defined by formula \ref{eq:prob measure general intro-text}
satisfy a large deviation principle (LDP) with speed $\beta N$ and
rate functional $F_{\beta}(\mu)-C_{\beta}.$ 
\end{itemize}
\end{thm}
\begin{proof}
When $\mu_{0}$ is a volume form the proof of the first point was
outlined in Section \ref{sec:Outline-of-the} and the full proof appears
in the companion paper \cite{berm8}. Here we just point out that
the proof also applies to more general $\mu_{0}$ of the form \ref{eq:mu zero in terms of quasipsh}.
First of all, the proof of the upper bound was given in Section \ref{sec:Outline-of-the}
and did not use any properties of the measure $\mu_{0}$ at all. As
for the lower bound we first observe that, upon replacing $\beta H^{(N)}$
with $\beta H^{(N)}(x_{1},...x_{N})-\psi_{+}(x_{1})-....-\psi_{+}(x_{N}),$
we may as well assume that $\mu_{0}=e^{-\psi_{-}}dV.$ Now, since
$\psi_{-}$ is assumed usc there is a sequence $v_{j}\in C^{0}(X)$
decreasing to $\psi_{-}$ and we set $\mu_{0,j}:=e^{-v_{j}}dV.$ We
thus have $e^{-\beta H^{(N)}}\mu_{0}^{\otimes N}\geq e^{-\beta H^{(N)}}\mu_{0,j}^{\otimes N},$
so that we can apply the LDP with respect to $\mu_{0,j}$ for a fixed
$j$ to get that the liminf in the LDP with respect to $\mu_{0}$
is bounded from below by $\mathcal{F}^{*}(\mu)+D_{\mu_{0,j}}(\mu)/\beta.$
Finally, the proof is concluded by letting $j\rightarrow\infty$ and
using the monotone convergence theorem of integration theory.The LDP
in the second point then follows from Lemma \ref{lem:ldp for nonnormalized meas}.\end{proof}
\begin{rem}
\label{Remark 26. transf. of free en}Note that when $\beta=1$ the
probability measure $\mu^{(N,\beta)}$on $X^{N}$ is invariant under
the transformation $(\left\Vert \cdot\right\Vert ^{2},\mu_{0})\mapsto((\left\Vert \cdot\right\Vert ^{2}e^{-v},e^{v}\mu_{0})$
of the defining data. Hence, by the previous theorem the corresponding
free energy functional $F_{\beta}$ is invariant under $(\theta,\mu_{0})\mapsto(\theta+ddv,e^{v}\mu_{0}),$
up to an additive constant. In fact, this is easy to see directly,
since it follows from the definitions that $D_{e^{v}\mu_{0}}(\mu)=D_{\mu_{0}}(\mu)-\int v\mu$
and $E_{\theta+dd^{c}v}(\mu)=E_{\theta}(\mu)+\int v\mu-C_{v}$ (where
the constant $C_{v}$ ensures that the infimum of $E_{\theta+dd^{c}v}$
vanishes.
\end{rem}

\subsection{\label{sub:Identification-of-the}Identification of $F_{\beta}$
with Mabuchi's K-energy functional}

Now assume that $L$ is ample and fix data $(\left\Vert \cdot\right\Vert ,dV,\beta)$
where $dV$ denotes a fixed volume form. Since $L$ is ample we may
assume that the curvature form of $\left\Vert \cdot\right\Vert $
is a Kähler form that we denote by $\omega_{0}.$ The probability
measures \ref{eq:prob measure general intro-text} only depends on
the data through the following two form 

\begin{equation}
\eta:=\beta\omega_{0}+\ensuremath{\mbox{Ric}\,}\ensuremath{\mu_{0},}\label{eq:def of eta}
\end{equation}
where $\ensuremath{\mbox{Ric\,}}\ensuremath{\mu_{0}}$denotes the
curvature form of the metric on $-K_{X}$ induced by the volume form
$\mu_{0}$ (compare Remark \ref{Remark 26. transf. of free en}).
It is a basic fact that the Monge-Ampère equation \ref{eq:ma eq in text}
is equivalent to the\emph{ twisted Kähler-Einstein equation} 
\begin{equation}
\mbox{\ensuremath{\mbox{Ric}\,}\ensuremath{\omega}}=-\beta\omega+\eta,\label{eq:tw ke eq in text}
\end{equation}
 (see \cite[Lemma 5.1]{berm8}).

We denote by $\mathcal{H}(X,\omega_{0})$ the interior of $PSH(X,\omega_{0})\cap C^{\infty}(X).$
The map $\varphi\mapsto\omega_{\varphi}$ identifies $\mathcal{H}(X,\omega_{0})/\R$
with the space of Kähler metrics in $c_{1}(L).$ By the Calabi-Yau
theorem \cite{y} any normalized volume form $\mu$ on $X$ is the
volume form of a unique Kähler form $\omega\in c_{1}(L),$ i.e. $\mu=\omega^{n}$
for a Kähler form $\omega\in c_{1}(L).$ As explained in \cite{berm6},
under the corresponding Calabi-Yau isomorphism $\mu\mapsto\omega,$
the functional $F_{\beta}$ may be identified with the twisted version
of Mabuchi's K-energy $\kappa$ of $\omega,$ i.e. 
\begin{equation}
F_{\beta}(\omega^{n})=\mbox{\ensuremath{\kappa}(\ensuremath{\omega}), }\label{eq:F is K-energ}
\end{equation}
We recall that the functional $\kappa$ was originally defined in
\cite{m1} for a general polarized manifold $(X,L)$ (when $\eta=0)$
by the implicit property 
\[
d(\kappa(\omega_{\varphi})_{|\varphi}=-(R_{\omega_{\varphi}}-C)\omega_{\varphi}^{n},
\]
 for $\varphi\in\mathcal{H}(X,\omega_{0}),$ where $R_{\omega_{\varphi}}$
is the (normalized) scalar curvature of the Kähler metric $\omega_{\varphi}$
and $C$ is a constant only depending on the cohomology of $(X,L).$
This formula thus defines $\kappa$ up to a normalizing constant.
When $L=K_{X}$ the formula \ref{eq:F is K-energ} then follows directly
from the Chen-Tian formula for $\kappa.$ For the convenience of the
reader we note that a direct proof of formula \ref{eq:F is K-energ}
can be given using Legendre transforms as follows (see Proposition
\ref{prop:pluri energy as legendre}). When $\varphi\in\mathcal{H}(X,\omega_{0})$
setting $\mu=\omega_{\varphi}^{n}$ gives 
\[
E_{\omega_{0}}(\mu)=f^{*}(df_{|(-\varphi)}),
\]
 and hence, by basic properties of Legendre transform, $d(E_{\omega_{0}}(\mu)_{|\mu}=-\varphi.$
Moreover, since clearly $d(D_{dV}(\mu))_{|\mu}=\log(\mu/dV)$ differentiating
the map $\varphi\mapsto\omega_{\varphi}^{n}$ from $\mathcal{H}(X,\omega_{0})$
into $\mathcal{M}_{1}(X)$ gives, using the chain rule, $d(F_{\beta}(\omega_{\varphi}^{n}))_{|\varphi}/(n-1)=$
\[
=\omega_{\varphi}^{n-1}\wedge\frac{i}{\pi}\partial\bar{\partial}(-\varphi+\beta^{-1}\log(\omega_{\varphi}^{n}/dV)=\omega_{\varphi}^{n-1}\wedge(-\omega_{\varphi}-\alpha-\beta^{-1}\mbox{Ric \ensuremath{\omega_{\varphi}})}=
\]
 which proves the twisted generalization of formula \ref{eq:F is K-energ}
(with $C=1)$.
\begin{rem}
In the case when $L$ is ample an alternative proof of the fact that
$\omega_{\beta}^{n}$ minimizes $F_{\beta}$ on $\mathcal{M}_{1}(X)$
can be given by using that $\omega_{\beta}$ is a critical point of
$\kappa$ and hence, by convexity, minimizes $\kappa$ on $\mathcal{H}(X,\omega_{0}).$
Accordingly, the Calabi-Yau isomorphism $\omega\mapsto\omega^{n}$
shows that $\omega_{\beta}^{n}$ minimizes the restriction of $F_{\beta}$
to the subspace of all volume forms in $\mathcal{M}_{1}(X).$ However,
showing that the infimum of $F_{\beta}$ over all of $\mathcal{M}_{1}(X)$
coincides with the infimum over the subspace of volume forms requires
the following non-trivial fact: any $\mu$ such that $E(\mu)<\infty$
can be written as a weak limit of volume forms $\mu_{j}$ such that
$E(\mu_{j})\rightarrow E(\mu)$ and $D_{dV}(\mu_{j})\rightarrow D_{dV}(\mu)$
(see \cite{bdl} where more general results are obtained).
\end{rem}

\section{\label{sec:Canonical-random-point}Canonical random point processes
on varieties of positive Kodaira dimension and log pairs}

\subsection{\label{sub:Canonical-point-processes klt}The case of a klt pair
$(X,D)$ of log general type}

Assume given a smooth log pair $(X,D),$ where $D$ is klt $\Q-$divisor
and assume that $(X,D)$ is of log general type, i.e. $L:=K_{X}+D$
is big. To the pair $(X,D)$ we may attach a canonical sequence of
random point processes on $X$ as follows. Fix a smooth metric on
$L$ represented by a weight $\phi_{0}.$ It determines a singular
volume form $\mu_{(\phi_{0},D)}$ locally represented as 
\[
\mu_{(\phi_{0},D)}=e^{\phi_{0}-\phi_{D}}idz\wedge d\bar{z},
\]
 where $idz\wedge d\bar{z}$ is a short hand for the local Euclidean
volume form determines by the local holomorphic coordinates $z$ and
$\phi_{0}$ is the corresponding local representation of the weight.
Then it if follows immediately from the definitions that the probability
measure in formula \ref{eq:prob measure general intro-text} determined
by the triple $(\mu,\phi_{0},\beta)=(\mu_{(\phi_{0},D)},\phi_{0},1)$
is independent of $\phi_{0}$ and thus canonically attached to $(X,D).$
In fact, it coincides with the probability measure of the canonical
random point defined in the introduction of the paper. The point is
that if $s_{k}$ is a holomorphic section of $k(K_{X}+D)$ then the
measure 
\[
\left\Vert s_{k}\right\Vert _{k\phi_{0}}^{2/k}\mu_{(\phi_{0},D)}=\left\Vert s_{k}\right\Vert _{k\phi_{0}}^{2/k}\mu_{(\phi_{0},D)}=\left|s_{k}\right|^{2/k}e^{-\phi_{D}}idz\wedge d\bar{z},
\]
 is clearly independent of $\phi_{0}.$

If $u_{KE}$ is the solution of the corresponding Monge-Ampère equation
\ref{eq:ma eq in text} (with $\theta=dd^{c}\phi_{0}$ and $\mu_{0}=\mu_{(\phi_{0},D)})$
then it will sometimes be convenient to rewrite the equation in terms
of the corresponding weight $\phi_{KE}:=\phi_{0}+u_{KE}:$ 
\begin{equation}
(dd^{c}\phi_{KE})^{n}=e^{\phi_{KE}-\phi_{D}}idz\wedge d\bar{z}\label{eq:k-e equation in terms of weights}
\end{equation}
Its curvature current $\omega_{KE}:=dd^{c}\phi_{KE}(=\theta+dd^{c}u_{KE})$
satisfies the following log Kähler-Einstein equation associated to
$(X,D):$ 
\begin{equation}
\mbox{Ric}\,\ensuremath{\omega_{KE}=-\omega_{KE}+[D]}\label{eq:k-e eq for current and pair text}
\end{equation}
where $\mbox{Ric}\ensuremath{\omega}$ denotes the Ricci curvature
of $\omega$ viewed as a current on $X.$ See \cite{ber-gu} for the
precise meaning of the previous equation in the general setting when
$K_{X}+D$ is merely assumed big. Anyway, for $K_{X}+D$ semi-ample
and big (or nef and big) it was shown in \cite{bbgez} that the solution
$\omega$ is smooth on the log regular locus (i.e. on $X-D)$ and
defines a bona fide Kähler-Einstein metric there and its potential
$u_{KE}$ is globally continuous on $X.$ Moreover, in the case when
$K_{X}+D$ is ample the current $\omega$ globally defines a singular
Kähler-Einstein metric with edge-cone singularities along $D$ (see
\cite{jmr,cgh}). Anyway, in the present setting will not need any
regularity properties of $\omega_{KE}.$ 

The free energy functional

\begin{equation}
F=E_{dd^{c}\phi_{0}}+D_{\mu_{(\phi_{0},D)}}.\label{eq:free energy in klt case}
\end{equation}
determined by the back-ground data $(\mu,\phi_{0},\beta)=(\mu_{(\phi_{0},D)},\phi_{0},1)$
is independent of $\phi_{0}$ modulo an additive constant (as follows
from the transformation property pointed out in Remark \ref{Remark 26. transf. of free en}
). In fact, when $K_{X}+D$ is ample the functional $F$ corresponds
to the log version $\kappa_{(X.D)}$ of Mabuchi's K-energy functional
in the sense that 
\[
F(\frac{\omega^{n}}{V})=\kappa_{(X.D)}(\omega)
\]
(we recall that $\kappa_{(X.D)}$ is also only defined up to an additive
constant). This can be seen as a generalization of a formula of Tian
and Chen for the K-energy (see \cite{berm6,bbgez} and references
therein).
\begin{thm}
\label{thm:conv for log general type}Let $(X,D)$ be a smooth klt
pair of general type. Then the empirical measures of the corresponding
canonical random point processes on $X$ converge in probability towards
the normalized volume form $dV_{KE}$ of the Kähler-Einstein metric
on $(X,D).$ More precisely, the laws of the empirical measures satisfy
a large deviation principle with speed $N_{k}$ and rate functional
$I(\mu):=F(\mu)-C,$ where $C=F(dV_{KE})$ and $\kappa_{(X.D)}(\omega):=I(\omega^{n}/V)$
coincides with Mabuchi's (log) K-energy of $\omega$ normalized so
that $\kappa_{(X.D)}(\omega_{KE})=0.$\end{thm}
\begin{proof}
Setting $(\mu,\phi_{0},\beta)=(\mu_{(\phi_{0},D)},\phi_{0},1)$ this
is a direct consequence of Theorem \ref{thm:big line bundle text}.
\end{proof}
As a rather direct consequence of the previous theorem we get the
following
\begin{cor}
\label{cor:conv of canoc seq of currents klt}Let $(X,D)$ be a smooth
klt pair of general type. Then the first correlation measures $\nu_{k}:=\int_{X^{N-1}}\mu^{(N_{k})}$
of the canonical point processes define a sequence of canonical measures
on $X$ converging weakly to $dV_{KE}$. Moreover, the curvature forms
of the corresponding metrics on $K_{X}$ defined by the sequence $\nu_{k}$
converge weakly to the unique Kähler-Einstein metric $\omega_{KE}$
on $X.$\end{cor}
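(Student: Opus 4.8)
The plan is to deduce both assertions from Theorem~\ref{thm:conv for log general type} and its underlying large deviation principle: the first directly, the second via an a priori compactness argument for the \emph{potentials} of the measures $\nu_{k}$, which upgrades the (soft) weak convergence of measures to weak convergence of the associated curvature currents. That $\nu_{k}\to dV_{KE}$ weakly is essentially immediate: in the notation of Section~\ref{sec:A-large-deviation}, $\nu_{k}$ is the one-point correlation measure $(\mu^{(N_{k})})_{1}$ of the Gibbs measure $\mu^{(N_{k})}$ (with $\beta=1$), the hypotheses of Theorem~\ref{thm:ldp for quasi sh hamilt text} (resp.\ Theorem~\ref{thm:big line bundle text}) were verified in the proof of Theorem~\ref{thm:conv for log general type}, and so Corollary~\ref{cor:conv of j point in general setting} with $j=1$ gives $\nu_{k}\to\mu_{1}=dV_{KE}$ weakly; equivalently, this is read off from the ``second proof'' of Theorem~\ref{thm:ldp for quasi sh hamilt text}, where the limit of the one-point correlation measures is identified with the minimizer of the free energy.

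For the second assertion, let $\mu_{0}=\mu_{(\phi_{0},D)}$ be the fixed singular volume form and write $\nu_{k}=e^{v_{k}}\mu_{0}$; then the metric on $K_{X}+D$ determined by $\nu_{k}$ has weight $\phi_{0}+v_{k}$ and curvature current $\omega_{k}=\theta_{0}+dd^{c}v_{k}$, which up to the additive constant $\log Z_{N_{k}}$ (killed by $dd^{c}$) is the current of formula~\ref{eq:def of canonical sequ of current intro}. I would establish two a priori facts: (i) $v_{k}$ is $\theta_{0}$-psh, i.e.\ $\omega_{k}\geq0$; and (ii) $\sup_{X}v_{k}$ is bounded uniformly in $k$. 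For (i) one differentiates the representation $e^{v_{k}}=c_{k}\int_{X^{N_{k}-1}}\|(\det S^{(k)})(\cdot,z)\|^{2/k}\,\mu_{0}^{\otimes(N_{k}-1)}$ under the integral sign: writing it as $c_{k}\int|F_{z}|^{2/k}$ in local coordinates, with $F_{z}$ holomorphic in the first variable, the Cauchy--Schwarz inequality applied in the $z$-variables gives $R\,\partial\bar\partial R\geq\partial R\wedge\bar\partial R$ for $R$ the latter integral, hence $dd^{c}\log R\geq0$ -- this is the standard mechanism behind plurisubharmonicity of Bergman-type weights (cf.~\cite{b-b}), and it is exactly what makes the non-integer exponent $2/k$ harmless. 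For (ii) one uses that $\nu_{k}$ is a probability measure: Jensen's inequality against a fixed smooth volume form $dV$ gives $\int_{X}v_{k}\,dV\leq C_{1}$, which with (i) and the compactness of $PSH(X,\theta_{0})$ yields $\sup_{X}v_{k}\leq C_{2}$, while $\int_{X}e^{v_{k}}\mu_{0}=1$ forces $\sup_{X}v_{k}\geq c_{0}>-\infty$; in particular the densities $e^{v_{k}}=\nu_{k}/\mu_{0}$ are uniformly bounded.

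Granting (i) and (ii), the family $\{v_{k}\}$ is relatively compact in $L^{1}(X)$, so it is enough to show that every subsequential $L^{1}$-limit $v_{\infty}$ equals $u_{KE}$, the potential of $\omega_{KE}$; then $v_{k}\to u_{KE}$ in $L^{1}(X)$ and hence $\omega_{k}=\theta_{0}+dd^{c}v_{k}\to\theta_{0}+dd^{c}u_{KE}=\omega_{KE}$ weakly. So suppose $v_{k_{j}}\to v_{\infty}$ in $L^{1}$ (and, after a further extraction, $\mu_{0}$-a.e.) with $v_{\infty}$ being $\theta_{0}$-psh. Fatou's lemma gives $e^{v_{\infty}}\mu_{0}\leq dV_{KE}$; conversely, by the Hartogs-type lemma for $\theta_{0}$-psh functions one has, for every continuous $g\geq0$ and every $\epsilon>0$, that $\int_{X}g\,e^{v_{k_{j}}}\mu_{0}\leq e^{\epsilon}\int_{X}g\,e^{v_{\infty}}\mu_{0}$ for $j$ large, so $\int_{X}g\,dV_{KE}\leq e^{\epsilon}\int_{X}g\,e^{v_{\infty}}\mu_{0}$ and, letting $\epsilon\to0$, $dV_{KE}\leq e^{v_{\infty}}\mu_{0}$ (here the finiteness of $\mu_{0}$ and the uniform bound (ii) ensure no mass escapes). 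Hence $e^{v_{\infty}}\mu_{0}=dV_{KE}$. Since by Theorem~\ref{thm:big line bundle text} and equation~\ref{eq:ma eq in text} with $\beta=1$ one also has $dV_{KE}=MA(u_{KE})=e^{u_{KE}}\mu_{0}$, it follows that $v_{\infty}=u_{KE}$ holds $\mu_{0}$-a.e.; as $\mu_{0}$ is a non-pluripolar measure and $v_{\infty},u_{KE}$ are $\theta_{0}$-psh, they agree on all of $X$. This proves the corollary.

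The main obstacle I anticipate is step (i)--(ii): extracting from the bare definition of $\nu_{k}$ both the plurisubharmonicity of its potential (where the Cauchy--Schwarz argument, rather than the naive convexity of $t\mapsto|t|^{2/k}$, is what is needed) and the uniform upper bound; once these a priori estimates are in place, the remainder is the routine ``precompactness $+$ Hartogs/Fatou $+$ uniqueness of the K\"ahler--Einstein potential'' scheme. In the purely log canonical (non-klt) case the same strategy applies, but one must in addition control the behaviour of the $v_{k}$ near the reduced boundary $D_{lc}$, where $\mu_{0}$ ceases to be a finite measure and the cusp forms enter; there the input of \cite{ber-mont} used elsewhere in the paper would be invoked.
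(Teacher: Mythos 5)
Your proof follows essentially the same route as the paper's: write the one-point correlation measure as $e^{u_k}dV$ with $u_k\in PSH(X,\theta_0)$, get a uniform $\sup$-bound from the normalization via Jensen, invoke $L^1$-compactness of $\theta_0$-psh functions, and identify the $L^1$-limit with the K\"ahler--Einstein potential so that $dd^c u_k\to\omega_{KE}$. The only difference is that you supply explicit justifications (the Cauchy--Schwarz/Bergman argument for plurisubharmonicity, Fatou plus a Hartogs-type estimate for the limit identification) for steps the paper states without proof; note that the second of these can be shortened to a dominated convergence argument using the uniform upper bound (ii) and the finiteness of $\mu_0$, rather than Hartogs, which is being invoked a bit loosely in the form $\int g\,e^{v_{k_j}}\mu_0\le e^{\epsilon}\int g\,e^{v_\infty}\mu_0$.
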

\begin{proof}
First observe that, by definition, the one point-correlation measure
we may be written as
\[
\mu_{1}^{(N_{k})}=e^{\phi_{k}}idz\wedge d\bar{z}=e^{u_{k}}dV
\]
 where $\phi_{0}$ is a fixed smooth weight on $K_{X},$ $dV=\mu_{\phi_{0}}$
and $u_{k}\in PSH(X,\theta),$ for $\theta=dd^{c}\phi_{0}.$ In particular,
$\int_{X}e^{u_{k}}dV=1$ and hence by Jensen's inequality, $\sup_{X}u_{k}\leq C_{0}.$
But, by standard compactness results for $\theta-$psh functions \cite{g-z},
it follows that either $u_{k}$ converges in $L^{1}(X)$ towards some
$u\in PSH(X,\theta),$ or there is a subsequence $u_{k_{j}}$ such
that $u_{k_{j}}\rightarrow-\infty$ uniformly. But the latter alternative
is not compatible with the condition $\int_{X}e^{u_{k}}dV=1$ and
hence $u_{k}\rightarrow u$ in $L^{1}(X).$ Equivalently, this means
that $\phi_{k}\rightarrow\phi$ in $L_{loc}^{1}$ where $\phi$ is
a weight on $K_{X}$ with positive curvature current. On the other
hand, by the previous theorem $\mu_{1}^{(N_{k})}\rightarrow e^{\phi_{KE}}idz\wedge d\bar{z,}$
where $dd^{c}\phi_{KE}=\omega_{KE}$ (compare Lemma \ref{lem:Conv in law equi to chaos}).
But, since a subsequence of $\phi_{k}$ converges a.e. on $X$ it
then follows that $\phi_{KE}=\phi.$ In particular, $dd^{c}\phi_{k}\rightarrow dd^{c}\phi_{KE}=\omega_{KE}$
weakly and that concludes the proof.
\end{proof}

\subsection{\label{sec:Varieties-of-positive}Varieties of positive Kodaira dimension}

\subsubsection{Birational setup}

Let us start by recalling the standard setup in birational geometry.
Let $X$ and $X'$ be (normal) projective varieties. A \emph{rationa}l
mapping $F$ from $X$ to $X',$ denoted by a dashed arrow $X\dashrightarrow X',$
is defined by a morphism $F:U\rightarrow X'$ from a Zariski open
subset $U$ of $X.$ It is called \emph{birational} if it has an inverse.
Then there is a maximal Zariski open subset $U\subset X,$ where $F$
defines a well-defined isomorphism onto its image (the complement
of $U$ is called the exceptional locus of $F).$ Given a\emph{ }rationa\emph{l}
mapping $F$ from $X$ to $X'$ and a probability measure $X$ which
is is absolutely continuous with respect to Lebesgue measure, we can
define $F_{*}\mu$ by pushing forward the restriction of $\mu$ to
any Zariski open subset $U$ where $F$ is well-defined and pull-backs
of such measures can be similarly defined. If $F:\,X\dashrightarrow X'$
is birational then there exists a non-singular variety $Z$ and birational
morphisms $f:\,Z\rightarrow X$ and $f':\,Z\rightarrow X'$ such that
$f^{'}=F\circ f$ (in fact, $f$ and $f'$ can even be obtained as
a sequence of blow-ups and blow-downs respectively). The \emph{Kodaira
dimension} $\kappa(X)$ of an $n-$dimensional (say non-singular)
variety $X$ is the birational invariant defined as the smallest number
$\kappa\in\{-\infty,0,1.,...,n\}$ such that $N_{k}=O(k^{\kappa}),$
where $N_{k}$ denotes the $k$ th plurigenus of $X,$ i.e. the dimension
of $H^{0}(X,kK_{X}).$ In the strictly positive case $\kappa(X)$
may be equivalently defined as the the dimension of the image of $X$
under the $k-$th canonical rational mappings 
\[
F_{k}:\,X\dashrightarrow\P^{*}H^{0}(X,kK_{X}),\,\,\,\,Y_{k}:=\overline{F_{k}(X)},\,\,\,\,\,x\mapsto\{s_{k}\in H^{0}(X,kK_{X}):\,s_{k}(x)=0\}
\]
where here and subsequently $k$ stands for a sufficiently large,
or sufficiently divisible, positive integer. By construction $kK_{X}$
is trivial along the fibers of $F_{k}.$ Next, we recall that by classical
results of Iitaka there exist non-singular varieties $X'$ and $Y'$
and a subjective morphism $F$ with connected fibers: 
\[
F:\,X'\rightarrow Y'
\]
such that $X'$ and $Y'$ are birational to $X$ and $Y,$ respectively
and such that $F$ is conjugate to $F_{k}.$ The fibration defined
by $F$ is uniquely determined up to birational equivalence and usually
referred to as the \emph{Iitaka fibration}. A very general fiber of
the fibration has vanishing Kodaira dimension.

Finally, it should be pointed out that by the deep results in \cite{bi,si},
proved in the context of the MMP, the canonical ring $R(X):=\bigoplus_{k\in\N}H^{0}(X,kK_{X})$
of any non-singular projective variety $X$ is finitely generated.
In particular, $Y_{k}$ (as defined above) is, for $k$ sufficiently
divisible, independent of $k$ (up to isomorphism) and coincides with
the\emph{ canonical model} of $X$ (i.e. the Proj of $R(X)).$ But
this information will not be needed for our arguments.

\subsubsection{Canonical point processes on varieties of positive Kodaira dimension}

Let us now consider a non-singular variety $X$ of\emph{ positive
Kodaira dimension }(there is also a logarithmic version of this setup
concerning klt pairs $(X,D)$, but for simplicity we will assume that
$D=0).$\footnote{More generally, the results will apply to $X$ a possibly singular
normal variety, by defining the corresponding probability measures
on the regular locus of $X$ and using the birational invariance below
to replace $X$ with any resolution.} On such a variety $X$ we can define the canonical random point processes
just as in section \ref{sub:Canonical-point-processes klt} (since
$N_{k}>0$ for $k$ large). 
\begin{prop}
The canonical random point processes attached to a variety $X$ of
positive Kodaira dimension are birationally invariant in the sense
that if $F:\,X\dashrightarrow X'$ is a birational mapping, then the
canonical probability measures on $X^{N_{k}}$ and $X'^{N_{k'}}$
are invariant under $F_{*}.$\end{prop}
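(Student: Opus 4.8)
The plan is to show that the defining density $\left|(\det S^{(k)})(z_1,\dots,z_{N_k})\right|^{2/k}\,dz_1\wedge d\bar z_1\wedge\cdots\wedge dz_{N_k}\wedge d\bar z_{N_k}$ on $X^{N_k}$ is genuinely intrinsic to the canonical ring data, and in particular matches up across a birational map. The key algebro-geometric input is Hironaka-type: given $F\colon X\dashrightarrow X'$ birational, resolve it by a smooth $Z$ with birational morphisms $f\colon Z\to X$ and $f'\colon Z\to X'$, $f'=F\circ f$, as already recalled in the birational setup above. Since $f$ and $f'$ are birational morphisms of smooth varieties, pullback of pluricanonical forms induces, for every $k$, canonical isomorphisms $f^*\colon H^0(X,kK_X)\xrightarrow{\ \sim\ }H^0(Z,kK_Z)$ and $f'^*\colon H^0(X',kK_{X'})\xrightarrow{\ \sim\ }H^0(Z,kK_Z)$ (the standard fact that plurigenera and, more precisely, spaces of pluricanonical sections are birational invariants, via Hartogs extension across the exceptional loci, which have codimension $\geq 2$ after going to $Z$ or, more carefully, because any section of $kK_Z$ pushes down since $f$ contracts only exceptional divisors along which a section of the pulled-back bundle vanishes to the appropriate order). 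In particular $N_k=\dim H^0(X,kK_X)=\dim H^0(X',kK_{X'})=N_{k'}$, so the two point processes live on products of the same dimension $N_k$.

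Next I would track the determinant lines. The composite isomorphism $(f'^*)^{-1}\circ f^*\colon H^0(X,kK_X)\to H^0(X',kK_{X'})$ induces an isomorphism on top exterior powers $\Lambda^{N_k}H^0(X,kK_X)\to\Lambda^{N_k}H^0(X',kK_{X'})$, hence carries a generator $\det S^{(k)}$ on $X$ to a nonzero multiple of a generator $\det S'^{(k)}$ on $X'$. But the canonical probability measure $\mu^{(N_k)}$ was shown in the introduction (and in Section~\ref{sub:Canonical-point-processes klt}) to be independent of the choice of generator, since the multiplicative constant cancels against the partition function $Z_{N_k}$. So it suffices to prove the identity of the \emph{unnormalized} densities up to a positive constant, i.e. that $F_*$ of the density built from $\det S^{(k)}$ on $X$ equals a constant times the density built from $(f'^*)^{-1}f^*\det S^{(k)}$ on $X'$.

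To see the density identity, work on $Z^{N_k}$ with the map $f^{N_k}\colon Z^{N_k}\to X^{N_k}$. For $s_k\in H^0(X,kK_X)$, the local density $|s_k|^{2/k}\,idz\wedge d\bar z$ is, by the very definition of $K_X$, a well-defined (possibly singular) volume form on $X$, and its pullback under the birational morphism $f$ equals $|f^*s_k|^{2/k}\,id\zeta\wedge d\bar\zeta$ on $Z$ — this is just the change-of-variables formula for volume forms together with the identification $f^*(dz^{\otimes k})$ with the local trivialization of $kK_Z$ times the Jacobian to the $k$-th power, so the $2/k$-power exactly undoes the Jacobian bookkeeping. Taking the $N_k$-fold product and noting $\det(f^*s_i^{(k)}(z_j))=f^{N_k}{}^*\det(s_i^{(k)}(z_j))$, one gets that the pushforward of the $Z^{N_k}$-density to $X^{N_k}$ is the $X$-density, and likewise (via $f'$) the pushforward to $X'^{N_k}$ is the $X'$-density associated to $(f'^*)^{-1}f^*\det S^{(k)}$. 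Since $F$ agrees with $f'\circ f^{-1}$ on the Zariski-open locus where all maps are isomorphisms, and since all the measures in question are absolutely continuous (so they assign zero mass to the complementary exceptional loci, which are proper subvarieties), pushforward under $F$ of the $X$-process equals the $X'$-process. Normalizing (dividing by the respective total masses, which are therefore equal) gives $F_*\mu^{(N_k)}=\mu'^{(N_{k'})}$.

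The main obstacle, and the only place needing real care, is the claim that $f^*\colon H^0(X,kK_X)\to H^0(Z,kK_Z)$ is an \emph{isomorphism} (injectivity is trivial; surjectivity is the point) and that pulled-back densities push forward without loss of mass — i.e. that the exceptional divisors genuinely contribute nothing. For $K_X$ itself this is the classical invariance of plurigenera under birational morphisms between smooth varieties: a section of $kK_Z$ must vanish along any $f$-exceptional divisor to order at least the discrepancy times $k\geq 0$, hence descends; equivalently the $L^{2/k}$ (or rather $L^{1}$ for the density $|s|^{2/k}$, once $k$ is fixed and one thinks of it multiplicatively) integrability is preserved because the discrepancies of a smooth variety are nonnegative. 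If one wanted to be maximally careful one would invoke the fact that $R(X)$ is a birational invariant, which is recalled in the excerpt; but as noted there this heavier input is not actually needed — the elementary Hartogs/discrepancy argument suffices. I would also remark that the same proof works verbatim in the logarithmic setting for a klt pair $(X,D)$, replacing $K_X$ by $K_X+D$ and using that klt discrepancies are $>-1$, so sections of $k(K_X+D)$ and the associated singular volume forms transform correctly under a log resolution.
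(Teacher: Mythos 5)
Your proof is correct and takes essentially the same route as the paper's: both reduce the statement to the birational invariance of the spaces $H^{0}(X,kK_{X})$ and then observe that the associated densities $\left|s_{k}\right|^{2/k}\,dz\wedge d\bar{z}$ transform under birational maps by the change-of-variables formula, with the exceptional loci contributing nothing since the measures are absolutely continuous, and with the residual multiplicative constant absorbed by the normalization $Z_{N_{k}}$. The only cosmetic difference is that you pass explicitly through a common smooth resolution $Z$ with birational morphisms $f,f'$ (where the Hartogs/discrepancy argument for surjectivity of $f^{*}$ is cleanest, all maps being morphisms), whereas the paper works directly with the maximal open $U\subset X$ on which $F$ is well-defined; your variant is if anything slightly more careful, and the remark at the end about the klt logarithmic case is a correct and useful observation.
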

\begin{proof}
This follows from the usual proof of the birational invariance of
the spaces $H^{0}(X,kK_{X}).$ Indeed, $F$ defines an isomorphism
from $U$ in $X$ to $U'$ in $X',$ where $U$ has codimension at
least two. Hence, by the usual unique extension properties of holomorphic
sections $F_{|V}^{*}$ induces an isomorphism between $H^{0}(X,kK_{X})$
and $H^{0}(X',kK_{X'}),$ which (by the change of variables formula)
respects the measure $\left(S_{k}\wedge\overline{S_{k}}\right)^{1/k}$
defined by an element $S_{k}\in H^{\text{0}}(X,kK_{X}).$ Applying
this argument on the products $X^{N_{k}}$ and $X^{'N_{k}}$ then
concludes the proof.
\end{proof}
When studying the random point processes on $X$ we may without loss
of generality, by the previous proposition, assume that there is morphisms
$F$ of $X$ to the base $Y$ of the Iitaka fibration. There is a
canonical family of relative measures $\mu_{X/Y}$ defined over an
open dense subset $Y_{0}$ of $Y,$ such that $Y-Y_{0}$ is a null
set, defined a follows. First, by the construction of the Iitaka fibration,
we may assume that $F$ is a submersion over some open dense subset
$Y_{0}$ of $Y$ and that $H^{0}(X_{y},kK_{X_{y}})$ is one-dimensional
for $y\in Y_{0}.$ Letting $\Omega_{y}^{(k)}$ denote a generator
of the latter one-dimensional vector space, 
\begin{equation}
(\mu_{X/Y})_{y}:=\left(\Omega_{y}^{(k)}\wedge\overline{\Omega_{y}^{(k)}}\right)^{1/k}/\int_{X_{y}}\left(\Omega_{y}^{(k)}\wedge\overline{\Omega_{y}^{(k)}}\right)^{1/k}\label{eq:def of rel prob meas}
\end{equation}
is a probability measure on $X_{y}$ which is independent of the generator
and of $k$ (since $(\Omega_{y}^{(k)})^{\otimes m}$ generates $H^{0}(X_{y},kmK_{X_{y}}))$
and it defines a smooth family of relative $(n-\kappa,n-\kappa)-$
forms over $Y_{0}.$ Let us also introduce some further notation:
if $\nu_{Y}$ is a measure on the base $Y$ which is absolutely continuous
with respect to Lebesgue measure, then we will write $F^{*}\nu_{Y}\wedge\mu_{X/Y}$
for the measure on $X$ defined as a fiber-product, i.e. if $u$ is
a smooth function on $X$ then
\[
\int_{X}F^{*}\nu_{Y}\wedge\mu_{X/Y}u:=\int_{Y_{0}}\left(\int_{X_{y}}u\mu_{X/Y}\right)\nu_{Y}
\]
(which is independent of the choice of $Y_{0}$ since the complement
is a null set). 
\begin{lem}
\label{lem:f-m}Let $X$ be a variety of positive Kodaira dimension
and assume that the Iitaka fibration \textup{$F:\,X\rightarrow Y$
is a morphism and that the branch locus $Y-Y_{0}$ is equal to the
support of a divisor $D$ in $Y$ with normal crossings. Then there
exists a line bundle $L_{X/Y}$ over $Y$ equipped with a (singular)
metric whose weight will be denoted by $\phi_{H},$ with the property
that $K_{Y}+L_{X/Y}$ is big and for any $S_{k}\in H^{0}(X,kK_{X})$
there exists a unique $s_{k}\in H^{0}(Y,k(K_{Y}+L_{X/Y})$ such that
\begin{equation}
\left(S_{k}\wedge\overline{S_{k}}\right)^{1/k}=F^{*}\left(\left(s_{k}\wedge\overline{s_{k}}\right)^{1/k}e^{-\phi_{H}}\right)\wedge\mu_{X/Y}\label{eq:measures in lemma f-m}
\end{equation}
over $Y_{0}.$ The weight $\phi_{H}$ is smooth on $Y_{0}$ and locally
around any given point in $Y-Y_{0}$ }
\begin{equation}
\phi_{H}=-q\log\log(\left|s_{D}\right|^{-2})+\log\left|s_{D_{X/Y}}\right|^{2}+O(1)\label{eq:sing structure in lemma f-m}
\end{equation}
 for some positive number $q,$ where $D_{X/Y}$ is a klt divisor
whose support coincides with $D.$ The line bundle $L_{X/Y}$ will
be referred to as the Hodge line bundle and $\phi_{H}$ as the weight
of the Hodge metric. \end{lem}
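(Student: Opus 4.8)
The plan is to build the Hodge line bundle $L_{X/Y}$ fiberwise over the smooth locus $Y_{0}$ of $F$ and then to invoke Fujino--Mori's canonical bundle formula \cite{f-m} to control its extension across the branch divisor $D$ (a similar reduction was used by Tsuji \cite{ts}). First I would work over $Y_{0},$ where $F$ is a submersion and $kK_{X}$ is trivial along the fibers. A very general fiber $X_{y}$ of the Iitaka fibration has vanishing Kodaira dimension, so for $k$ sufficiently divisible $h^{0}(X_{y},kK_{X_{y}})=1,$ the form $\Omega_{y}^{(k)}$ of \eqref{eq:def of rel prob meas} generates $H^{0}(X_{y},kK_{X_{y}}),$ and $(\Omega_{y}^{(k)})^{\otimes m}$ generates $H^{0}(X_{y},kmK_{X_{y}}).$ Hence $F_{*}\mathcal{O}_{X}(kK_{X/Y})$ restricts over $Y_{0}$ to a line bundle $\mathcal{L}_{k}$ with $\mathcal{L}_{km}=\mathcal{L}_{k}^{\otimes m},$ and I define the $\Q$-line bundle $L_{X/Y}$ on $Y_{0}$ by $kL_{X/Y}:=\mathcal{L}_{k}.$ Using $\mathcal{O}_{X}(kK_{X})=\mathcal{O}_{X}(kK_{X/Y})\otimes F^{*}\mathcal{O}_{Y}(kK_{Y})$ and the projection formula, division of $S_{k}\in H^{0}(X,kK_{X})$ by the relative generator yields a section $s_{k}\in H^{0}(Y_{0},k(K_{Y}+L_{X/Y})),$ and this assignment is an isomorphism. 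Equipping $L_{X/Y}$ over $Y_{0}$ with the Hodge metric whose weight $\phi_{H}$ is minus the logarithm of the normalizing integral in \eqref{eq:def of rel prob meas}, one checks by unwinding the definitions that the identity \eqref{eq:measures in lemma f-m} holds over $Y_{0}$ (both sides equal the measure $(S_{k}\wedge\overline{S_{k}})^{1/k}$ split along the fibration), and that $\phi_{H}$ is smooth there since the relative measures and period data vary smoothly over $Y_{0}.$

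Next I would extend $L_{X/Y}$ and $\phi_{H}$ across $D$ and prove bigness, which is exactly where Fujino--Mori's canonical bundle formula \cite{f-m} enters. It produces a $\Q$-line bundle $L_{X/Y}$ on all of $Y$ (a genuine line bundle after passing to a multiple) extending the one above, together with a $\Q$-linear equivalence of the shape $K_{X}\sim_{\Q}F^{*}(K_{Y}+L_{X/Y})+(\text{vertical exceptional correction}),$ which after replacing $X$ by a higher birational model (permitted by the birational invariance of the canonical point processes) gives $H^{0}(Y,k(K_{Y}+L_{X/Y}))\cong H^{0}(X,kK_{X})$ for $k$ large and sufficiently divisible. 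Since $\kappa(X)=\dim Y$ this forces $N_{k}\sim k^{\dim Y},$ so $K_{Y}+L_{X/Y}$ is big; and over $Y_{0}$ the extended data coincides with the one built in the first step, so \eqref{eq:measures in lemma f-m} holds there (the complement of $Y_{0}$ being a null set).

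The analytically delicate point is the singularity structure \eqref{eq:sing structure in lemma f-m} of $\phi_{H}$ near a point of $D.$ Fujino--Mori's formula decomposes $L_{X/Y}$ into a moduli part, carrying the $L^{2}$-Hodge metric on the canonical (Deligne) extension of the relevant Hodge bundle, and a discriminant part supported on $D.$ The discriminant part accounts for the term $\log\left|s_{D_{X/Y}}\right|^{2}$: the coefficient of $D_{X/Y}$ along a component of $D$ is one minus the log canonical threshold of the fiber along that component, and the boundedness results in \cite{f-m} force these coefficients into $]0,1[,$ so $D_{X/Y}$ is klt with support exactly $D.$ The moduli part carries the $L^{2}$-metric on the Deligne extension, and Schmid's nilpotent orbit theorem, through the resulting norm estimates for a degenerating variation of Hodge structure (as exploited by Kawamata), shows this metric has $\log$-$\log$ growth along $D$; it therefore contributes $q\log(-\log\left|s_{D}\right|^{2})+O(1)$ for a non-negative integer $q$ read off from the monodromy weight filtration, and along the components actually occurring in $D_{X/Y}$ one may take $q>0.$ Adding the two contributions yields \eqref{eq:sing structure in lemma f-m}.

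The main obstacle is precisely this third step, and within it two facts lying outside the pluripotential-theoretic machinery developed so far: first, that the discriminant divisor $D_{X/Y}$ is klt, which is the content of Fujino--Mori's analysis of the canonical bundle formula via semistable reduction \cite{f-m}; and second, that the $L^{2}$-Hodge metric on the moduli part has \emph{precisely} $\log$-$\log$ singularities along $D,$ which requires the asymptotics of degenerating Hodge structures (Schmid's nilpotent orbit theorem and the attendant norm estimates, in the form used by Kawamata). By contrast, the bigness of $K_{Y}+L_{X/Y}$ and the pointwise identity \eqref{eq:measures in lemma f-m} over $Y_{0}$ are comparatively routine once the Hodge line bundle has been set up.
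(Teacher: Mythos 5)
Your proposal is correct and follows essentially the same route as the paper: build the Hodge $\Q$-line bundle and $L^{2}$-metric fiberwise over $Y_{0}$, verify \eqref{eq:measures in lemma f-m} in adapted coordinates, invoke Fujino--Mori's canonical bundle formula to extend across $D$ and identify $H^{0}(X,kK_{X})\cong H^{0}(Y,k(K_{Y}+L_{X/Y}))$ (hence bigness), and then use the moduli/discriminant decomposition $L_{X/Y}=M_{X/Y}+D_{X/Y}$ together with the Kawamata--Schmid degeneration asymptotics to obtain the log-log plus klt-divisor singularity structure of $\phi_{H}$. The only cosmetic difference is that you phrase the bundle formula as a $\Q$-linear equivalence after a birational modification while the paper quotes it directly as $K_{X}+B_{-}=\pi^{*}(K_{Y}+L_{X/Y})+B_{+}$ with $\mathrm{codim}\,F(\mathrm{supp}\,B_{-})\geq 2$; these are the same statement.
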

\begin{proof}
By assumption the morphism $F$ restricts to define a submersion $\pi:X_{0}\rightarrow Y_{0}$
between Zariski open subsets. The (tautological) decomposition $K_{X}=F^{*}K_{Y}+K_{X/Y}$
restricted over $Y^{0}$ gives 
\begin{equation}
kK_{X_{0}}=F^{*}k\left(K_{Y_{0}}+F^{*}L_{0}\right),\label{eq:proof of lemma f-m}
\end{equation}
 where $L_{0}:=\pi_{*}(K_{X_{0}/Y_{0}}).$ The latter direct image
sheaf is defined as a $\Q-$line bundle over $Y_{0}:$ $F_{*}(K_{X_{0}/Y_{0}})=\frac{1}{k}F_{*}(kK_{X_{0}/Y_{0}})$
for any fixed $k$ which is sufficiently large. Concretely, $kL_{0}$
is locally trivialized by $\Omega_{y}^{(k)},$ where $\Omega_{y}^{(k)}$
is as in formula \ref{eq:def of rel prob meas}. We equip the direct
image line bundle $L_{0}$ with the canonical $L^{2}-$metric, usually
referred to as the Hodge metric. Concretely, the $k$ tensor power
of the letter metric is defined by $\left\Vert \left(\Omega_{y}^{(k)}\right)\right\Vert ^{2}:=\left(\int_{X_{y}}\left(\Omega_{y}^{(k)}\wedge\overline{\Omega_{y}^{(k)}}\right)^{1/k}\right)^{k},$
i.e the local weight $\phi_{H}$ for $L_{0}$ determined by the trivialization
(multi-) section $\left(\Omega_{y}^{(k)}\right)^{1/k}$ is given by
\[
\phi_{H}(y):=-\log\left\Vert \left(\Omega_{y}^{(k)}\right)^{1/k}\right\Vert ^{2}=-\log\int_{X_{y}}\left(\Omega_{y}^{(k)}\wedge\overline{\Omega_{y}^{(k)}}\right)^{1/k})
\]
Next, fixing $x\in X_{0}$ we take a small neighborhood $V$ of $y_{0}:=F(x_{0})$
and local holomorphic coordinates $w$ centered at $y_{0}$ and set
$dw:=dw_{1}\wedge\cdots\wedge dw_{\kappa}.$ Then the restriction
of $S_{k}\in H^{0}(X,kK_{X})$ to $U:=F^{-1}(V)$ may be written as
$S_{k}=f_{k}(w)\Omega_{y}^{(k)}\otimes dw^{\otimes k}$ for a local
holomorphic function $f_{k}(w)$ on $V$ (which transforms as a section
of $kL_{0}\rightarrow Y_{0})$ and 

\[
\left(S_{k}\wedge\overline{S_{k}}\right)^{1/k}=\left|f_{k}(w)\right|^{2/k}dw\wedge d\bar{w}\wedge\Omega_{y}^{(k)}\wedge\overline{\Omega_{y}^{(k)}}=\left|f_{k}(w)\right|^{2/k}e^{-\phi_{H}(w)}dw\wedge d\bar{w}\wedge\mu_{X/Y},
\]
 where $\mu_{X/Y}$ is the relative probability measure defined in
formula \ref{eq:def of rel prob meas}. Since $x$ was an arbitrary
point in $X_{0}$ this proves the relation \ref{eq:measures in lemma f-m}
over $Y_{0}$ if $s_{k}$ is taken in $H^{0}(Y_{0},k(K_{Y}+L_{0}).$ 

Next, we will give the construction of the line bundle $L_{X/Y}$
extending $L_{0}$ and show that $s_{k}$ above can be taken as the
restriction to $Y_{0}$ of an element in $H^{0}(Y,k(K_{Y}+L_{X/Y}).$
First, following Fujino-Mori \cite{f-m}, we may assume that the double
dual of the torsion free sheaf $\pi_{*}(kK_{X/Y})/k$ is a well-defined
$\Q-$line bundle and set $L_{X/Y}:=L_{0}.$ The canonical bundle
formula of Fujino-Mori (see Prop 2.2. in \cite{f-m}) says that 
\[
K_{X}+B_{-}=\pi^{*}(K_{Y}+L_{X/Y})+B_{\text{+}}
\]
 where $B_{\pm}$ are effective $\Q-$divisors (supported in $Y-Y_{0})$
such that $\mbox{codim}F(\mbox{supp}(B_{-}))\geq2$ and $F_{*}(\mathcal{O}(kB_{+})=\mathcal{O}_{Y}.$
This formula implies that
\begin{itemize}
\item If $S_{k}\in H^{0}(X,kK_{X}),$ then the restriction of $S_{k}$ to
$X_{0}$ may be written as $S_{k|X_{0}}=F^{*}s_{k|Y_{0}}$ for a unique
section $s_{k}\in H^{0}(Y,k(K_{Y}+L_{X/Y})).$ 
\end{itemize}
Indeed, by the canonical bundle formula and the property of $B_{+},$
the restriction of $S_{k}$ to $Y-\mbox{supp}(B_{-})$ may be written
as $S_{k}=F^{*}s_{k}\otimes s_{B_{+}}^{\otimes k}$ for a unique $s_{k}\in H^{0}(Y-F(\mbox{supp}(B_{-})),kK_{Y}).$
But since $\mbox{codim}F(\mbox{supp}(B_{-}))\geq2$ the section $s_{k}$
extends to a unique element in $H^{0}(Y,k(K_{Y}+L_{X/Y})).$ Since
$\mbox{supp}(B_{-})\subset Y-Y_{0}$ this proves the point above.
Note that this is essentially the same argument as the one used in
\cite{f-m} to prove that $H^{0}(X,kK_{X})=H^{0}(X,k(K_{Y}+L_{X/Y})$
(see the proof of Theorem 4.5 in op.cit.) and it also shows that $K_{Y}+L_{X/Y}$
is big (since $N_{k}\sim k^{\kappa}).$ Finally, let us briefly recall
the proof of the singularity structure of the Hodge metric on $L_{X/Y}$
in a neighborhood of a point contained in $Y-Y_{0},$ which follows
from Tsuji's argument in \cite{ts}. First, as shown in \cite{f-m}
may assume that $L_{X/Y}=M_{X/Y}+D_{X/Y},$ where $M_{X/Y}$ (``
the semi-stable part'') is a nef line bundle on $Y$ and $D_{X/Y}$
(the ``discriminant part'') is a klt divisor on $Y.$ By the work
of Kawamata and Schmidt on variations of Hodge structures (see \cite{ts}
and references therein) the lines bundles $M_{X/Y}$ and $D_{X/Y}$
contribute over $Y-Y_{0}$ to the first and second term in the decomposition
\ref{eq:sing structure in lemma f-m} of the weight $\phi_{H}$ defined
with respect to a given trivialization of $L_{X/Y}$ over a neighborhood
of a point in $Y-Y_{0}.$
\end{proof}
Next, let us recall the definition of the (singular) canonical metric
$\omega_{Y}$ on the base of the base $Y$ of the Iitaka fibration
(which is a birational invariant). For our purposes it will be enough
to define it in the case when $X$ fibers over $Y$ as in the previous
lemma. Then we define $\omega_{Y}\in c_{1}(K_{Y}+L_{X/Y})$ as $\omega_{Y}=dd^{c}\phi_{Y},$
where $\phi_{Y}$ is the weight of a (possible singular) positively
curved metric $\phi_{Y}$ on $K_{X}+L_{X/Y}$ defined as the unique
finite energy weight $\phi$ on the big line bundle $K_{X}+L_{X/Y}$
solving 
\begin{equation}
MA(\phi)=e^{\phi-\phi_{H}}idw\wedge d\bar{w.}\label{eq:ma eq for phi on base of ithaka}
\end{equation}
(which is an equation of the form appearing in Prop \ref{prop:min of free energy}).Note
that by the previous lemma we have that locally $e^{-\phi}\in L_{loc}^{p}$
for some $p>1$ and hence by the Kolodziej type estimates in \cite{begz}
$\phi$ has minimal singularities. We may hence define $\omega_{Y}$
equivalently as the unique current in $c_{1}(K_{Y}+L_{X/Y})$ with
minimal singularities such that 

\[
\mbox{Ric \ensuremath{\omega_{Y}=-\omega_{Y}+\omega_{WP}+[\Delta]}},
\]
 where $\Delta$ is the klt divisor of Fujino-Mori (the ``discriminant
divisor'') supported on the branch locus in $Y$ and $\omega_{WP}$
is equal to $1_{Y_{0}}\omega_{WP}$ where $\omega_{WP}$ is the generalized
Weil-Petersson type metric of the fibration over $Y_{0}$ (compare
\cite{s-t}). It may be defined as the curvature form of the Hodge
metric on $L_{X/Y}\rightarrow Y_{0}.$ Alternatively, we note that
arguing as in the beginning of the proof of the previous lemma  gives
the following equivalent equation for $\phi_{Y},$ where the pull-back
and push-forward is defined over $Y_{0}$ (and then extended by zero):
\[
MA(\phi_{Y})=F_{*}(e^{F^{*}\phi_{Y}}dz\wedge d\bar{z})
\]
(where the right hand side is defined with respect to suitable local
coordinates $(z,w)$ on $X$ respecting the fibration). Finally, we
define the canonical probability measure$\mu_{X}$ on $X$ of Song-Tian-Tsuji
as 
\begin{equation}
\mu_{X}:=F^{*}\mu_{Y}\wedge\mu_{X/Y}\label{eq:def of mu X text}
\end{equation}
Equivalently, $\mu_{X}=F^{*}\omega_{Y}^{\kappa}\wedge\omega_{CY}^{n.\kappa},$
where $\omega_{CY}$ denotes a family of Ricci flat Kähler metrics
defined over the very general Calabi-Yau fibers and the metrics are
normalized to have unit-volume (abusing terminology slightly the term
Calabi-Yau refers to a variety of zero Kodaira dimension). 
\begin{thm}
\label{thm:pos kod text}Let $X$ be projective variety of positive
Kodaira dimension. Then the empirical measures of the canonical random
point processes on $X$ converge in probability towards the canonical
probability measure $\mu_{X}$ of Song-Tian-Tsuji.\end{thm}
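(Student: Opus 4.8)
The plan is to transfer the canonical point process on $X$ to a point process on the base $Y$ of the Ithaka fibration by means of the Fujino--Mori canonical bundle formula, to invoke the convergence already available on $Y$, and finally to propagate it fibrewise to $X$ by an elementary second--moment estimate. By the birational invariance of the canonical point processes (proved above) together with classical Ithaka theory, we may replace $X$ by a non-singular birational model for which the Ithaka fibration is a surjective morphism $F\colon X\to Y$ with branch locus $Y-Y_{0}$ the support of a normal crossings divisor $D$, so that Lemma \ref{lem:f-m} applies.

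By Lemma \ref{lem:f-m} one has $N_{k}=\dim H^{0}(X,kK_{X})=\dim H^{0}(Y,k(K_{Y}+L_{X/Y}))$, and relation \ref{eq:measures in lemma f-m}, applied entry-by-entry to the Vandermonde-type determinant --- the fibre generators $\Omega_{y}^{(k)}$ depend only on the base point and so factor out of the determinant --- yields, over $X_{0}^{N_{k}}$ with $X_{0}:=F^{-1}(Y_{0})$ and $w_{j}:=F(x_{j})$,
\[
\left(\det S^{(k)}\wedge\overline{\det S^{(k)}}\right)^{1/k}=\left(F^{\times N_{k}}\right)^{*}\left(\left(\det\sigma^{(k)}\wedge\overline{\det\sigma^{(k)}}\right)^{1/k}e^{-\sum_{j}\phi_{H}(w_{j})}\right)\wedge\mu_{X/Y}^{\boxtimes N_{k}},
\]
where $\sigma^{(k)}$ is the generator of $\det H^{0}(Y,k(K_{Y}+L_{X/Y}))$ corresponding to $\det S^{(k)}$, $F^{\times N_{k}}\colon X^{N_{k}}\to Y^{N_{k}}$ is the product of $F$, and $\mu_{X/Y}^{\boxtimes N_{k}}$ is the product of the relative probability measures $(\mu_{X/Y})_{w_{j}}$ along the fibres. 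Since $F^{-1}(Y-Y_{0})$ is a proper analytic subset of $X$ and $\mu^{(N_{k})}$ is absolutely continuous, the complement of $X_{0}^{N_{k}}$ is $\mu^{(N_{k})}$-null, so this identity is exact for the computation of laws. It shows that the push-forward of $\mu^{(N_{k})}$ under $F^{\times N_{k}}$ is the point process on $Y$ attached at $\beta=1$ to the big line bundle $K_{Y}+L_{X/Y}$ with background measure $\mu_{0}:=e^{-\phi_{H}}\,idw\wedge d\bar{w}$ (relative to any fixed continuous reference metric, the splitting between metric and background measure being immaterial at $\beta=1$ by Remark \ref{Remark 26. transf. of free en}); in particular the two partition functions $Z_{N_{k}}$ coincide. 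Moreover, conditionally on $(w_{1},\dots,w_{N_{k}})$, the points $x_{j}$ are independent, $x_{j}$ being distributed according to $(\mu_{X/Y})_{w_{j}}$.

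By \ref{eq:sing structure in lemma f-m} the Hodge weight satisfies $\phi_{H}=q\log\log(-|s_{D}|^{2})+\log|s_{D_{X/Y}}|^{2}+O(1)$ with $D_{X/Y}$ klt, so $\mu_{0}$ is a finite measure of the form $e^{-v}\mu_{D_{X/Y}}$, with $v=q\log\log(-|s_{D}|^{2})+O(1)$ lower semicontinuous and valued in $]-\infty,+\infty]$; this is exactly the class of background measures to which the general form of Theorem \ref{thm:big line bundle text} applies (established by the argument of Theorem \ref{thm:conv for log general type} --- the extra term $v$ needs no semi-ampleness hypothesis, as the singularity divisor $D_{X/Y}$ is klt). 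Hence the empirical measures $F_{*}\delta_{N_{k}}$ of the $Y$-process converge in probability to $MA(\phi_{Y})=\mu_{Y}$, where $\phi_{Y}$ is the unique finite-energy solution of $MA(\phi)=e^{\phi-\phi_{H}}\,idw\wedge d\bar{w}$, i.e. of \ref{eq:ma eq for phi on base of ithaka}. Since $e^{-\phi_{H}}\in L_{loc}^{p}$ for some $p>1$, the Kolodziej-type estimates of \cite{begz} give that $\phi_{Y}$ has minimal singularities and $\mu_{Y}$ charges no analytic subset; in particular $\mu_{Y}(Y-Y_{0})=0$.

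Finally, fix $\psi\in C^{0}(X)$ and put $\Psi(w):=\int_{X_{w}}\psi\,\mu_{X/Y}$ for $w\in Y_{0}$, extended by $0$ across the $\mu_{Y}$-null set $Y-Y_{0}$; $\Psi$ is bounded by $\|\psi\|_{\infty}$ and continuous on $Y_{0}$. Conditionally on $(w_{j})$, the mean of $\langle\delta_{N_{k}},\psi\rangle$ is $\langle F_{*}\delta_{N_{k}},\Psi\rangle$, which converges in probability to $\langle\mu_{Y},\Psi\rangle=\langle F^{*}\mu_{Y}\wedge\mu_{X/Y},\psi\rangle=\langle\mu_{X},\psi\rangle$ by the preceding paragraph and the portmanteau theorem (the set of discontinuities of $\Psi$ being $\mu_{Y}$-null), while conditional independence gives
\[
\mathrm{Var}\left(\langle\delta_{N_{k}},\psi\rangle\mid(w_{j})\right)=\frac{1}{N_{k}^{2}}\sum_{j=1}^{N_{k}}\mathrm{Var}\left(\psi(x_{j})\mid w_{j}\right)\le\frac{\|\psi\|_{\infty}^{2}}{N_{k}}\longrightarrow0.
\]
Chebyshev's inequality and bounded convergence then yield $\langle\delta_{N_{k}},\psi\rangle\to\langle\mu_{X},\psi\rangle$ in probability, and since the weak topology on $\mathcal{M}_{1}(X)$ is induced by countably many such $\psi$, this gives $\delta_{N_{k}}\to\mu_{X}$ in probability with $\mu_{X}=F^{*}\mu_{Y}\wedge\mu_{X/Y}$ the canonical measure \ref{eq:def of mu X text} of Song--Tian--Tsuji. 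The one genuinely substantial point is the convergence on $Y$: one must check that the general-background form of Theorem \ref{thm:big line bundle text} does apply to $\mu_{0}=e^{-\phi_{H}}\,idw\wedge d\bar{w}$, which hinges on the precise singularity structure \ref{eq:sing structure in lemma f-m} of $\phi_{H}$ --- a klt discriminant part together with a $\log\log$ term --- and on the $L_{loc}^{p}$-integrability it forces; the fibrewise propagation is soft, modulo the almost-everywhere continuity of $\Psi$ and the absence of mass of $\mu_{Y}$ on the branch locus.
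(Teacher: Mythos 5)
Your proposal is correct and follows essentially the same route as the paper: birational reduction together with the Fujino--Mori factorization of Lemma \ref{lem:f-m} reduces everything to the base $Y$, where the generalized form of Theorem \ref{thm:big line bundle text} applies to the background measure $e^{-\phi_{H}}\,idw\wedge d\bar{w}$ with klt plus log-log singularities encoded by \ref{eq:sing structure in lemma f-m}. The only presentational difference is in the soft fibrewise step: you condition on the base points and use an explicit variance/Chebyshev estimate, whereas the paper phrases the same propagation through the factorization $(\mu^{(N)})_{j}=F^{*}(\mu_{Y}^{(N_{k})})_{j}\wedge\mu_{X/Y}^{\otimes j}$ and convergence of $j$-point correlation measures to $\mu_{X}^{\otimes j}$ --- the two are equivalent, and your version is arguably cleaner.
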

\begin{proof}
By the previous lemma we can write$\mu^{(N_{k})}=F^{*}\mu_{Y}^{(N_{k})}\wedge\mu_{X/Y}^{\otimes N_{k}}$
where $\mu_{Y}^{(N_{k})}$ is defined with respect to the big line
bundle $K_{Y}+L_{X/Y}\rightarrow Y$ equipped, where $L_{X/Y}$ is
equipped with the Hodge metric. In particular, the $j-$point correlation
measures $(\mu^{(N_{k})})_{j}$ are given by 
\[
(\mu^{(N)})_{j}:=\int_{X^{N-j}}\mu^{(N)}=F^{*}(\mu_{Y}^{(N_{k})})_{j}\wedge\mu_{X/Y}^{\otimes j}
\]
Next we note that we may proceed as in the proof of Theorem \ref{thm:big line bundle text}
to see that the empirical measures of $\mu_{Y}^{(N_{k})}$ converge
in probability towards $\mu_{Y}$ (and even with with a LDP), which
implies that
\[
(\mu_{Y}^{(N_{k})})_{j}\rightarrow\mu_{Y}^{\otimes j}.
\]
(compare Cor \ref{cor:conv of j point in general setting}). Indeed,
fixing smooth Hermitian metrics (weights) $\phi$ and $\phi_{0}$
on $K_{Y}$ and $L_{X/Y}$ the measure $\left(s_{k}\wedge\overline{s_{k}}\right)^{1/k}e^{-\phi_{H}}$
defined by an element $s_{k}\in H^{0}(Y,k(K_{Y}+L_{X/Y})$ may be
written as $\left\Vert s_{k}\right\Vert ^{2}\mu_{\phi}e^{-(\phi_{H}-\phi_{0})}$
and by the previous lemma 
\[
\mu_{\phi}e^{-(\phi_{H}-\phi_{0})}=e^{-v}\mu_{\Delta}:=\mu_{0}
\]
for a klt divisor $\Delta$ where $v$ is upper semi-continuous and
such that $e^{-v}\mu_{\Delta}$ is a finite measure (even with an
$L^{p}-$density for some $p>1).$ But then we may proceed precisely
as in the proof of Theorem \ref{thm:big line bundle text}: the argument
for the upper bound works the same for any finite measure $\mu_{0}$
and for the proof of the lower bound we can take a sequence $v_{j}$
of continuous functions decreasing to $v$ and replace $\mu_{0}$
with $\mu_{j}:=e^{-v_{j}}\mu_{\Delta}.$ Then we let $j\rightarrow\infty$
in the end of the argument, just as before and obtain the desired
convergence in probability towards the deterministic measure $\mu_{Y}$
satisfying $\mu_{Y}=MA(\phi_{Y}),$ where $MA(\phi_{Y})=e^{\phi_{Y}-\phi_{H}}idw\wedge d\bar{w,}$
as desired.
\end{proof}
Finally, combining the previous convergence with Sanov's theorem  gives
\[
(\mu^{(N)})_{j}\rightarrow F^{*}(\mu_{Y}^{\otimes j})\wedge\mu_{X/Y}^{\otimes j}=(F^{*}(\mu_{Y})\wedge\mu_{X/Y})^{\otimes j},
\]
 which equivalently means that the canonical empirical measures on
$X$ converge in probability towards $(F^{*}(\mu_{Y})\wedge\mu_{X/Y},$
as desired. 
\begin{cor}
\label{cor:conv of caonical currents towards pull-back}Let $X$ be
projective variety of positive Kodaira dimension. Then the currents
\[
\omega_{k}:=\frac{i}{2\pi}\partial\bar{\partial}\log\int_{X^{N_{k}-1}}\left|(\det S^{(k)})(\cdot,z_{1},...,z_{N_{k}-1})\right|^{2/k}dz_{1}\wedge d\bar{z}_{1}\wedge\cdots\wedge dz_{N_{k}-1}\wedge d\bar{z}_{N_{k}-1}
\]
converge, as $k\rightarrow\infty,$ weakly towards a canonical positive
current in $c_{1}(K_{X})$ which, on a Zariski open subset coincides
with $F^{*}\omega_{Y},$ i.e. the pull-back of the canonical metric
on the base of the Iitaka fibration.\end{cor}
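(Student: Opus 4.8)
The plan is to realize $\omega_k$ as the curvature current of the weight on $K_X$ attached to the one-point correlation measure $\nu_k:=(\mu^{(N_k)})_1=\int_{X^{N_k-1}}\mu^{(N_k)}$ of the canonical process, and then to upgrade the weak convergence $\nu_k\to\mu_X$ supplied by (the proof of) Theorem \ref{thm:pos kod text} to convergence of the corresponding weights. Writing $\nu_k=e^{\phi_k}\,idz\wedge d\bar z$ in local holomorphic coordinates $z$ on $X$, the local functions
\[
\phi_k:=\log\int_{X^{N_k-1}}\left|(\det S^{(k)})(\cdot,z_1,\dots,z_{N_k-1})\right|^{2/k}\,idz_1\wedge d\bar z_1\wedge\cdots\wedge idz_{N_k-1}\wedge d\bar z_{N_k-1}-\log Z_{N_k}
\]
glue to a weight on $K_X$ with $\omega_k=dd^c\phi_k$, a positive current in $c_1(K_X)$ (recorded in the introduction and, in the klt case, in Corollary \ref{cor:conv of canoc seq of currents klt}: integrating out variables keeps the Vandermonde weight $\log\|\det S^{(k)}\|^2$ plurisubharmonic). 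So it suffices to analyse the sequence $\phi_k$; as in Theorem \ref{thm:pos kod text} we may, by birational invariance, assume the Ithaka fibration $F\colon X\to Y$ is a morphism.

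Since $\kappa(X)>0$ the class $c_1(K_X)$ is pseudoeffective, so I would fix a smooth reference weight $\phi_{\mathrm{ref}}$ on $K_X$ with curvature $\theta$ and write $\phi_k=\phi_{\mathrm{ref}}+u_k$ with $u_k\in PSH(X,\theta)$. Because each $\nu_k$ is a probability measure, Jensen's inequality bounds the $\mu_{\phi_{\mathrm{ref}}}$-mean of $u_k$, hence $\sup_X u_k\le C_0$ uniformly in $k$. By the standard compactness of $PSH(X,\theta)$ (cf.\ \cite{g-z} and the proof of Corollary \ref{cor:conv of canoc seq of currents klt}), along a subsequence either $u_{k_j}\to u_\infty$ in $L^1(X)$ with $u_\infty\in PSH(X,\theta)$, or $u_{k_j}\to-\infty$ uniformly; the latter is incompatible with $\int_X e^{u_k}\,\mu_{\phi_{\mathrm{ref}}}=1$, so $\phi_{k_j}\to\phi_\infty:=\phi_{\mathrm{ref}}+u_\infty$ in $L^1_{loc}$ and $\omega_{k_j}\to dd^c\phi_\infty$ weakly.

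Next I would identify $\phi_\infty$. Passing to a further subsequence, $u_{k_j}\to u_\infty$ a.e.; since $u_{k_j}\le C_0$ uniformly, dominated convergence against continuous test functions gives $\nu_{k_j}=e^{u_{k_j}}\mu_{\phi_{\mathrm{ref}}}\to e^{u_\infty}\mu_{\phi_{\mathrm{ref}}}$ weakly, and comparison with Theorem \ref{thm:pos kod text} (whose proof establishes $\nu_k\to\mu_X$ weakly) forces $\mu_X=e^{u_\infty}\mu_{\phi_{\mathrm{ref}}}$, i.e.\ $\mu_X=e^{\phi_\infty}\,idz\wedge d\bar z$ locally. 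On the other hand, on the Zariski-open set $X_0:=F^{-1}(Y_0)$, the definition \ref{eq:def of mu X text} of $\mu_X$ together with Lemma \ref{lem:f-m} and equation \ref{eq:ma eq for phi on base of ithaka} gives, in the adapted coordinates $(w,z)$ of that lemma (in which $\mu_{X/Y}$ has density $e^{\phi_H(w)}$ with respect to $idz\wedge d\bar z$, and $MA(\phi_Y)$ has density $e^{\phi_Y(w)-\phi_H(w)}$ with respect to $idw\wedge d\bar w$),
\[
\mu_X|_{X_0}= e^{\phi_Y(w)-\phi_H(w)}\cdot e^{\phi_H(w)}\cdot i\,dw\wedge d\bar w\wedge dz\wedge d\bar z = e^{\phi_Y(w)}\, i\,dw\wedge d\bar w\wedge dz\wedge d\bar z,
\]
the Hodge weight cancelling, so that the weight of $\mu_X$ on $K_X$ over $X_0$ is exactly $F^*\phi_Y$. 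Hence $\phi_\infty=F^*\phi_Y$ a.e.\ on $X_0$, and since $X\setminus X_0$ is a null set this determines $\phi_\infty$ uniquely as the $\theta$-psh extension of $F^*\phi_Y$, independently of the subsequence. Therefore the whole sequence $\omega_k$ converges weakly to the canonical positive current $\omega_\infty:=dd^c\phi_\infty\in c_1(K_X)$, which on $X_0$ equals $dd^c(F^*\phi_Y)=F^*\omega_Y$; canonicity of $\omega_\infty$ follows from that of $\phi_Y$.

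The step I expect to be the main obstacle is the identification of $\phi_\infty$ on the nose: a priori, from $L^1$ (or a.e.) convergence of the weights one gets only the inequality $e^{\phi_\infty}\,idz\wedge d\bar z\le\mu_X$ via Fatou, and mass could in principle escape in the limit. What prevents this is precisely the uniform upper bound $\sup_X u_k\le C_0$ — automatic because the $\nu_k$ are probability measures on the compact manifold $X$ — which upgrades the Fatou inequality to genuine weak convergence $\nu_{k_j}\to e^{\phi_\infty}\,idz\wedge d\bar z$ and hence to equality with $\mu_X$; the remaining ingredient, $\mu_X|_{X_0}=e^{F^*\phi_Y}\,idz\wedge d\bar z$, is routine bookkeeping with the canonical bundle formula of Lemma \ref{lem:f-m}. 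As an alternative one may transfer the whole argument to the base $Y$, where $K_Y+L_{X/Y}$ is big: Corollary \ref{cor:conv of canoc seq of currents klt} (in the twisted form used in the proof of Theorem \ref{thm:pos kod text}) gives the analogous weights on $Y$ converging in $L^1_{loc}(Y)$ to $\phi_Y$, and one concludes by pulling back along $F$ and wedging with the $k$-independent relative form $\mu_{X/Y}$.
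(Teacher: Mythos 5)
Your proposal is correct and follows the paper's proof: the paper likewise reduces to the one‑point correlation measure, invokes the $L^1_{loc}$‑compactness argument of Corollary \ref{cor:conv of canoc seq of currents klt} to extract a limiting $\theta$‑psh weight, identifies the limit measure with $\mu_X$ via Theorem \ref{thm:pos kod text}, and then computes in the adapted coordinates of Lemma \ref{lem:f-m} that the Hodge term $\phi_H$ cancels, so that the weight of $\mu_X$ over $X_0$ is $F^{*}\phi_Y$. Your use of the uniform upper bound $\sup_X u_k\le C_0$ together with dominated convergence to upgrade a.e.\ convergence of the weights to weak convergence of $\nu_{k_j}$ merely spells out a step the paper leaves implicit.
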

\begin{proof}
Arguing exactly as in the proof of Cor \ref{cor:conv of canoc seq of currents klt}
gives that $\omega_{k}\rightarrow dd^{c}\log(\mu_{X}).$ But specializing
formula \ref{eq:def of mu X text} to $X_{0}$ over $Y_{0}$ gives
that $dd^{c}\log(\mu_{X})=dd^{c}F^{*}\omega_{Y}+0$ on $X_{0},$ using
that $dd^{c}\log\left|\Omega_{y}^{(k)}\right|^{2}=0\text{ }$ and
that the terms involving $dd^{c}\phi_{H}$ cancel (as is seen by working
on a local set $U$ in $X_{0}$ as in the beginning of the proof of
the previous lemma).\end{proof}
\begin{rem}
If $K_{X}$ is semi-ample we can take $F$ to be the morphism defined
by the canonical map at some fixed level $k$ so that $Y:=X_{can}$
is the canonical model of $X.$ Then we can define $\omega_{can}$
as $\omega_{can}:=dd^{c}\phi_{can}$ where $\phi_{can}$ is the unique
(locally bounded) positively curved metric on $\mathcal{O}(1)_{|X_{can}}$
solving the equation \ref{eq:ma eq for phi on base of ithaka} (using
that $\mathcal{O}(1)$ is naturally isomorphic to $K_{Y}+L_{X/Y}$
over $Y_{0}).$ The metric $\omega_{can}$ thus defined yields a canonical
Kähler metric in $c_{1}(\mathcal{O}(1)_{|X_{can}})$ which, by the
uniqueness argument in \cite{s-t} coincides with the canonical metric
constructed in \cite{s-t}. Accordingly, the limiting current obtained
in the previous corollary coincides with $F^{*}\omega_{can}$ on $X_{0}$
and hence everywhere since the currents are elements in the same cohomology
class $c_{1}(K_{X}).$ In this setting the previous corollary is thus
analogous to the convergence result for the Kähler-Ricci flow for
a variety with $K_{X}$ semi-ample obtained in \cite{s-t}.
\end{rem}

\section{\label{sec:Fano-manifolds-and}Fano manifolds and Gibbs stability }

In this section we will outline a conjectural general picture concerning
the the case when $\beta=-\text{1 }$in the Kähler-Einstein equation,
i.e. the case of Kähler-Einstein metrics with positive Ricci curvature.
In other words, this the case when the dual $-K_{X}$ of the canonical
line bundle is ample, which means that $X$ is a \emph{Fano manifold.}
We will establish a weak form of the conjecture, but we leave the
general case for the future. 

If a Kähler-Einstein metric exists on a Fano manifold $X$ then, by
the Bando-Mabuchi theorem, it is uniquely determined modulo the action
of automorphism group generated by holomorphic vector fields on $X.$
But in general there are obstructions to the existence of a Kähler-Einstein
metric on $X$ and according to the Yau-Tian-Donaldson conjecture
$X$ admits a Kähler-Einstein metric precisely when $X$ is K-polystable.
This latter notion of stability is of an algebro-geometric nature
and can be formulated as an asymptotic version of stability in the
sense of Geometric Invariant Theory (GIT). Recently, the conjecture
was finally settled by Chen-Donaldson-Sun \cite{c-d-s} (see also
\cite{ti}). Here we will introduce a probabilistic/statistical mechanical
version of the Yau-Tian-Donaldson where the notion of K-stability
is replaced by a notion that we will call\emph{ Gibbs stability. }To
explain this first observe that to be able to define an analog of
the probability measure appearing in formula \ref{eq:canon prob measure intro}
in the introduction of the paper to the Fano setting we have to replace
$K_{X}$ with its dual $-K_{X}$ to ensure the existence of holomorphic
sections. But this forces us to replace the exponent $1/k$ with $-1/k,$
in order to get a well-defined density on $X^{N_{k}}.$ However, there
is then no guarantee that the corresponding normalization constant
\begin{equation}
Z_{N_{k}}:=\int_{X^{N_{k}}}\left|(\det S^{(k)})(z_{1},...,z_{N_{k}})\right|^{-2/k}dz_{1}\wedge d\bar{z}_{1}\wedge\cdots\wedge dz_{N_{k}}\wedge d\bar{z}_{N_{k}}\label{eq:canonincal partition f in fano case}
\end{equation}
 is finite, since the corresponding integrand is singular along the
zero-locus of $\det S^{(k)}.$ Accordingly, we say that a Fano manifold
$X$ is \emph{Gibbs stable at level $k$} if $Z_{N_{k}}$ is finite
(which is independent of the choice of generator $\det S^{(k)}$)
and \emph{asymptotically Gibbs stable} if it is Gibbs stable at level
$k$ for any $k,$ sufficiently large. 
\begin{conjecture}
\label{conj: fano}Let $X$ be Fano manifold. Then $X$ admits a unique
Kähler-Einstein metric $\omega_{KE}$ if and only if $X$ is asymptotically
Gibbs stable. Moreover, if $X$ is asymptotically Gibbs stable, then
the empirical measures of the corresponding point processes converge
in probability towards the normalized volume form of $\omega_{KE}$
and the corresponding canonical sequence of curvature forms $\omega_{k}$
converge weakly towards $\omega_{KE}.$
\end{conjecture}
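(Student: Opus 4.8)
The plan is to mirror the $\beta>0$ analysis of the preceding sections as far as possible, while isolating the phenomenon forced by the sign change. For a Fano manifold the relevant inverse temperature in the Monge--Amp\`ere equation \ref{eq:ma eq in text} is $\beta=-1$, so the natural functional on $\mathcal{M}_1(X)$ is the free energy at $\beta=-1$, namely $F_{-1}:=E_\theta-D_{\mu_0}$, where $\theta\in c_1(-K_X)$ is the curvature of a fixed smooth positively curved metric and $\mu_0=MA(\phi_0)$ a smooth volume form. Unlike $E_\theta+\tfrac1\beta D_{\mu_0}$ for $\beta>0$, this functional is \emph{neither convex nor, in general, bounded from below}: this is the statistical-mechanical shadow of the genuine obstructions to K\"ahler--Einstein metrics on Fano manifolds, and it is why Theorem~\ref{thm:ldp for quasi sh hamilt text} cannot be applied verbatim, and why an LDP with rate functional (a normalization of) $F_{-1}$ can only be expected once $X$ is (asymptotically) Gibbs stable. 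Accordingly I would only hope to establish the \emph{weak} form of the conjecture here --- roughly, the implication starting from the existence of $\omega_{KE}$, together with the convergence statements under a quantitative strengthening of Gibbs stability --- leaving the full equivalence open.

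The first step in any attack is the algebro-geometric reformulation of Gibbs stability. Writing $Z^{(k)}\subset X^{N_k}$ for the zero divisor of $\det S^{(k)}$ (a section of $(-kK_X)^{\boxtimes N_k}$), finiteness of the canonical partition function \ref{eq:canonincal partition f in fano case} is exactly the statement that the pair $(X^{N_k},\tfrac1k Z^{(k)})$ is Kawamata log terminal. Expanding the Vandermonde-type determinant and applying Fubini reduces this to a log-canonical-threshold condition on $X$ itself for a $\Q$-divisor in $|-K_X|_{\Q}$ assembled from a basis of $H^{0}(X,-kK_X)$; the precise normalization identifies it with the defining inequality of the Fujita--Odaka $\delta_k$-invariant, so ``Gibbs stable at level $k$'' is a statement of the form $\delta_k(X)>1$, and ``asymptotically Gibbs stable'' asks this for all large $k$. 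Granting the valuative characterization of K\"ahler--Einstein metrics (existence of a unique $\omega_{KE}$, i.e.\ finite automorphism group, is equivalent to $\delta(X)>1$) and the convergence $\delta_k(X)\to\delta(X)$, one gets $\delta_k(X)>1$ for $k\gg 0$, hence the implication ``$\omega_{KE}$ exists $\Rightarrow$ asymptotically Gibbs stable''. The converse is the hard half.

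For the convergence statement, assume $X$ is asymptotically Gibbs stable and follow the \emph{second} proof of Theorem~\ref{thm:ldp for quasi sh hamilt text}, via the Gibbs variational principle. Lemma~\ref{lem:(Gibbs-variational-principle).} is insensitive to the sign of $\beta$ as soon as $Z_{N_k}<\infty$, giving $-\tfrac1{N_k}\log Z_{N_k}=\inf_{\mathcal{M}_1(X^{N_k})}\big(-E^{(N_k)}+D^{(N_k)}\big)$ with $H^{(N_k)}=-\tfrac1k\log\|\det S^{(k)}\|^2$. The lower bound on $-\tfrac1{N_k}\log Z_{N_k}$ is soft: insert a trial measure $\mu^{\otimes N_k}$ with $\mu$ near $dV_{KE}$, use sub-additivity of the entropy together with Sanov (Proposition~\ref{prop:sanov}) for the entropy term, and use that $\log\|\det S^{(k)}\|^2$ is quasi-plurisubharmonic so that the sub-mean inequality of Corollary~\ref{cor:submean ineq in complex setting} still applies (it never used the sign of the exponent) for the energy term. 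The matching upper bound is the crux: it amounts to a uniform-in-$N_k$ integrability estimate for $\|\det S^{(k)}\|^{-2/k}$, i.e.\ a \emph{negative-exponent} analogue of Theorem~\ref{thm:thm A and B in b-b} controlling $\tfrac1{kN_k}\log\|\det S^{(k)}\|_{L^{-2/k}(\mu_0^{\otimes N_k})}$ by $-\mathcal{F}_\theta(\cdot)$ up to $o(1)$ --- precisely where Gibbs stability must be upgraded to a quantitative (``uniform'') form. Granting this, the limit measure is the unique minimizer of $F_{-1}$, which by its Euler--Lagrange equation (the $\beta=-1$ case of Proposition~\ref{prop:min of free energy}) satisfies $MA(u_{KE})=e^{-u_{KE}}\mu_0$, i.e.\ is $dV_{KE}$; the convergence $\omega_k\to\omega_{KE}$ then follows from the convergence of the one-point correlation measures exactly as in Corollary~\ref{cor:conv of canoc seq of currents klt}, using compactness of $\theta$-psh weights.

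The main obstacle is therefore the negative-temperature analysis, which bites in two linked ways. First, there is no soft replacement for Theorem~\ref{thm:thm A and B in b-b} at the exponent $-2/k$: since $\|\det S^{(k)}\|^{-2/k}$ is a \emph{large} density rather than a bounded one, controlling its $L^1$-mass on $X^{N_k}$ is, by the dictionary above, a uniform log-canonical-threshold problem for the Vandermonde divisors $Z^{(k)}$. Second, the entropy now enters $F_{-1}$ with the opposite sign, so one loses the strict convexity and coercivity that, for $\beta>0$, made the minimizer automatically unique and the LDP automatic; overcoming this without extra input is the exact analogue of the failure of K-semistability to imply K-stability. For these reasons I expect ``asymptotic Gibbs stability $\Rightarrow\omega_{KE}$'' to require a genuinely \emph{uniform} version of Gibbs stability, playing the role of uniform K-stability --- and it is in that conditional form that I would prove the weak version of the conjecture.
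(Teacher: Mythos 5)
The statement you were asked about is a \emph{conjecture}; the paper itself offers no proof of it. The only result the paper establishes in this direction is the theorem immediately following it: \emph{strong} Gibbs stability implies the existence of a unique K\"ahler--Einstein metric, proved by combining the Gibbs variational principle (Lemma \ref{lem:(Gibbs-variational-principle).}), the energy lower bound \ref{eq:lower bound on energy} (which rests on the first point of Theorem \ref{thm:thm A and B in b-b}, not on the sub-mean inequality), and Tian's coercivity criterion. You correctly recognize that the full equivalence is out of reach and that only a conditional or partial result can be attempted; your use of the Gibbs variational principle to extract coercivity of the K-energy from a lower bound on $-\tfrac{1}{N_k}\log Z_{N_k,\beta}$ is in the same spirit as the paper's weak theorem, and your diagnosis of the central obstruction (no negative-exponent analogue of Theorem \ref{thm:thm A and B in b-b}, i.e.\ a uniform log-canonical-threshold problem for the incidence divisors $\mathcal{D}_k$) is exactly the point where the paper stops.

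There is, however, a genuine error in the convergence part of your sketch: you assert that the sub-mean inequality of Corollary \ref{cor:submean ineq in complex setting} ``never used the sign of the exponent.'' It did. The proof of that corollary derives $\Delta_{g}\left\Vert s^{(N)}\right\Vert ^{2\beta}\geq-\beta N\lambda\left\Vert s^{(N)}\right\Vert ^{2\beta}$ from $\Delta_{g}\log\left\Vert s^{(N)}\right\Vert ^{2/k}\geq-N\lambda$, and this step requires $\beta>0$: for a quasi-psh $\phi$ one has $\Delta e^{\beta\phi}=\beta e^{\beta\phi}\bigl(\Delta\phi+\beta|\nabla\phi|^{2}\bigr)$, which inherits a lower bound of the required form only when $\beta>0$. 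For $\beta<0$ the function $\left\Vert \det S^{(k)}\right\Vert ^{2\beta/k}$ is the exponential of a quasi-\emph{superharmonic} function, blows up along the zero divisor $\mathcal{D}_{k}$, and is not uniformly quasi-subharmonic in any useful sense; this is precisely one of the reasons that Theorem \ref{thm:ldp for quasi sh hamilt text} and hence Theorem \ref{thm:big line bundle text} do not transfer to the Fano setting, and why the convergence statement remains conjectural. Your identification of Gibbs stability at level $k$ with a Fujita--Odaka-type $\delta_{k}$-bound is also more than a Fubini computation --- the log canonical threshold of $(X^{N_{k}},\tfrac{1}{k}\mathcal{D}_{k})$ on the $N_{k}$-fold product is a nontrivial quantity whose precise relation to valuative thresholds on $X$ itself is a substantial theorem, well beyond the scope of the present paper --- so the implication ``$\omega_{KE}$ exists $\Rightarrow$ asymptotic Gibbs stability'' via $\delta_{k}(X)\to\delta(X)$ should be presented as a conjectural input rather than a reduction.
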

Here $\omega_{k}$ is defined by 
\[
-\frac{i}{2\pi}\partial\bar{\partial}\log\int_{X^{N_{k}-1}}\left|(\det S^{(k)})(\cdot,z_{1},...,z_{N_{k}-1})\right|^{-2/k}dz_{1}\wedge d\bar{z}_{1}\wedge\cdots\wedge dz_{N_{k}-1}\wedge d\bar{z}_{N_{k}-1}.
\]
It follows from applying Berndtsson's positivity of direct images
\cite{bern1} $N-1$ times (i.e. for each factor of $X^{N-1})$ that
$\omega_{k}$ is in fact positive in the sense of currents and hence
a Kähler form for $k$ sufficiently large. One can also define a weaker
notion of Gibbs stability which takes automorphisms into account,
but for simplicity we will focus here on the case when $X$ admits
no automorphisms.

Interestingly, the notion of Gibbs stability introduced above can
also be given the following purely\emph{ algebro-geometric }formulation
in the spirit of the Minimal Model Program: let $\mathcal{D}_{k}$
be the effective divisor in $X^{N_{k}}$ cut out by the section $\mbox{det \ensuremath{S^{(k)}.}}$
Geometrically, $\mathcal{D}_{k}$ may be represented as the following
incidence divisor in $X^{N_{k}}:$ 
\[
\mathcal{D}_{k}:=\{(x_{1},...x_{N})\in X^{N_{k}}:\,\exists s\in H^{0}(X,-kK_{X}):\,s(x_{i})=0,\,i=1,..,N_{k}\}
\]
 Gibbs stability at level $k$ amounts to saying that $\mathcal{D}_{k}/k$
is mildly singular in the sense of MMP (i.e. its singularities are
Kawamata Log Terminal) or more precisely that 
\begin{equation}
\mbox{lct\ensuremath{(\mathcal{D}_{k}/k)>1}}\label{eq:bound on lct}
\end{equation}
for $k>>1,$ where $\mbox{lct\ensuremath{(\mathcal{D}_{k}/k)}}$denotes
the the\emph{ log canonical threshold (lct)} of the anti-canonical
$\Q-$divisor divisor $\mathcal{D}_{k}/k$ on $X^{N_{k}}.$ The equivalence
with the original definition follows directly from the\emph{ analytic
}definition of the lct of a divisor $D=\{s=0\}$ as the sup of all
$t$ such that $1/|s|^{2t}$ is locally integrable (also using the
``openness property'' for the lct of algebraic singularities). It
also seems natural to say that $X$ is \emph{uniformly Gibbs stable,}
if 
\begin{equation}
\gamma(X):=\liminf_{k\rightarrow\infty}\mbox{lct}(\mathcal{D}_{k}/k)>1.\label{eq:invariant gamma}
\end{equation}
There is also an even stronger notion of Gibbs stability which we
call\emph{ strong Gibbs stability }and which demands that 
\[
\lim_{k\rightarrow\infty}\frac{1}{N_{k}}\log Z_{N_{k},\beta}<\infty
\]
for some $\beta<1,$ where $Z_{N_{k},\beta}$ is the partition function
at inverse temperature $\beta$ (formula \ref{eq:def of partion function in beta-setting text})
defined with respect to a metric $\left\Vert \cdot\right\Vert $ on
$-K_{X},$ a volume form $dV$ (for example the one defined by the
metric $\left\Vert \cdot\right\Vert $) and the corresponding generator
$\det S^{(k)}.$ 
\begin{prop}
Suppose that $X$ admits non-trivial holomorphic vector fields. Then
$\gamma(X)\leq1.$ More precisely, for $k$ sufficiently large the
canonical partition function $Z_{N_{k}}$ is non-finite.\end{prop}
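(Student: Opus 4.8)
The plan is to prove directly that $Z_{N_{k}}=\infty$ for every $k$ large enough that $N_{k}\geq 2$; the assertion $\gamma(X)\leq 1$ then follows at once, since, as recalled above, $Z_{N_{k}}<\infty$ is equivalent to $\mbox{lct}(\mathcal{D}_{k}/k)>1$. The starting point is that a non-zero holomorphic vector field $V$ on the compact Fano manifold $X$ lies in $\mbox{Lie}(\mbox{Aut}^{0}(X))$, and $\mbox{Aut}^{0}(X)$ is a connected linear algebraic group (linear because $X$ is rationally connected). Hence $\overline{\exp(\mathbb{C}V)}$ is a positive-dimensional connected commutative algebraic group, so $X$ carries a non-trivial algebraic action of a one-parameter group $H$ which is $\mathbb{C}^{*}$ or $\mathbb{C}$. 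Let $H$ act diagonally on $Y:=X^{N_{k}}$. Because the incidence condition defining $\mathcal{D}_{k}$ is $\mbox{Aut}(X)$-invariant, $\mathcal{D}_{k}$ is $H$-invariant; equivalently $\det S^{(k)}$, which spans the one-dimensional $H$-module $\Lambda^{N_{k}}H^{0}(X,-kK_{X})$, is an $H$-semi-invariant section, $h^{*}\det S^{(k)}=\chi(h)\det S^{(k)}$ for a character $\chi\colon H\to\mathbb{C}^{*}$. Consequently the canonical density $\Omega_{k}:=|\det S^{(k)}(z_{1},\dots,z_{N_{k}})|^{-2/k}\,dz_{1}\wedge d\bar z_{1}\wedge\cdots\wedge dz_{N_{k}}\wedge d\bar z_{N_{k}}$ on $Y$, whose total mass is exactly $Z_{N_{k}}$, transforms by $h^{*}\Omega_{k}=\lambda(h)\,\Omega_{k}$ with $\lambda(h):=|\chi(h)|^{-2/k}>0$; moreover $\Omega_{k}$ has strictly positive (possibly infinite) density with respect to Lebesgue measure at every point of $Y$.

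Next I would split into two cases. If $\lambda$ is not identically $1$ (which can only happen for $H=\mathbb{C}^{*}$), the change-of-variables formula for the biholomorphism $h$ gives $Z_{N_{k}}=\int_{Y}\Omega_{k}=\int_{Y}h^{*}\Omega_{k}=\lambda(h)\,Z_{N_{k}}$ for all $h\in H$, which forces $Z_{N_{k}}\in\{0,\infty\}$, and hence $Z_{N_{k}}=\infty$ since $\Omega_{k}$ is positive on the dense open set $Y\setminus\mathcal{D}_{k}$. If instead $\lambda\equiv 1$ — automatic when $H=\mathbb{C}$ — then $\Omega_{k}$ is genuinely $H$-invariant, and it suffices to observe that a non-trivial action of $\mathbb{C}^{*}$ or $\mathbb{C}$ on a projective variety admits no invariant probability measure of positive Lebesgue density: the orbit of a point $y_{0}$ not fixed by $H$ is non-compact, and by properness of $Y$ the real flow satisfies $g_{t}\cdot y_{0}\to y_{+}$ as $t\to+\infty$, where $y_{+}$ is an $H$-fixed point distinct from $y_{0}$; taking $y_{0}$ in the open dense "big cell" of the Bialynicki-Birula decomposition and a small ball $A\ni y_{0}$ inside that cell and disjoint from the fixed locus, every point of $A$ leaves $\overline{A}$ permanently under $g_{1},g_{2},\dots$, which contradicts the Poincare recurrence theorem applied to $g_{1}$ and the normalised invariant measure unless $Z_{N_{k}}=\infty$. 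Either way $Z_{N_{k}}=\infty$ for all $k\gg 0$, so $X$ is Gibbs unstable at every sufficiently large level, $\mbox{lct}(\mathcal{D}_{k}/k)\leq 1$ for all such $k$, and therefore $\gamma(X)=\liminf_{k}\mbox{lct}(\mathcal{D}_{k}/k)\leq 1$.

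I expect the delicate step to be the genuinely invariant case — it is the one that is forced when the only available one-parameter subgroup is unipotent, and it requires a little care to produce a positive-measure set of non-recurrent points: one must know that a non-fixed orbit really does escape to a fixed point along the real flow (valuative criterion together with Bialynicki-Birula theory) and that this escape is locally uniform over a full-dimensional neighbourhood. An alternative, purely algebro-geometric route in the $\mathbb{C}^{*}$-case avoids recurrence altogether: since $\mathcal{D}_{k}\in|-kK_{Y}|$, the pair $(Y,\mathcal{D}_{k}/k)$ is log Calabi-Yau with $H$-invariant boundary, and comparing $H$-weights of the isotropy coordinates at a fixed point $P$ with the weight $c_{k}$ of the ($H$-semi-invariant) rational $k$-th root $\eta_{k}$ of the canonical $\mathbb{Q}$-form — i.e. of the section $\det S^{(k)}$ read multiplicatively — yields $A_{(Y,\,\mathcal{D}_{k}/k)}(v)=c_{k}$ for the monomial valuation $v$ at $P$; replacing $H$ by its inverse exchanges source and sink and flips the sign of $c_{k}$, so one of the two fixed points gives a valuation of non-positive log discrepancy, hence $\mbox{lct}(\mathcal{D}_{k}/k)\leq 1$ directly. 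This handles $H=\mathbb{C}^{*}$ cleanly but still needs an analogous argument (or the recurrence argument) for the unipotent case, which is why the measure-theoretic dichotomy above seems the more economical overall strategy.
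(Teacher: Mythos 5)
Your argument is correct, and it takes a genuinely different route from the paper's. The paper's proof passes to the \emph{one-point correlation measure} $\nu_{k}$ on $X$, observes (assuming $Z_{N_k}<\infty$) that $\nu_k$ is an $\mathrm{Aut}(X)$-invariant probability volume form, and then appeals to a "general principle" — Lie derivative of $\nu_k$ along a holomorphic vector field plus Cartan's formula — to conclude no such invariant volume form exists. Your proof instead works directly with the density $\Omega_k$ on $X^{N_k}$, extracts a one-parameter algebraic subgroup $H\cong\mathbb{G}_m$ or $\mathbb{G}_a$ from $\mathrm{Aut}^0(X)$, observes that $H$ acts on $\Omega_k$ by a positive multiplier $\lambda$, and dichotomizes: either $\lambda\not\equiv 1$, in which case the change-of-variables identity $Z_{N_k}=\lambda(h)Z_{N_k}$ alone forces $Z_{N_k}=\infty$; or $\lambda\equiv1$, in which case an $H$-invariant probability measure with a.e.-positive density would exist, which you rule out via Poincaré recurrence together with the fact that non-compact $\mathbb{G}_m$- or $\mathbb{G}_a$-orbits in a projective variety limit onto fixed points. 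What this buys: your Case 1 isolates the "weight obstruction" that is only implicit in the paper's claim that $\nu_k$ is invariant, and your Case 2 replaces the paper's somewhat telegraphic "general principle" (which, as stated, is actually false for general compact complex manifolds, e.g. complex tori, and in the Fano case would require something like the holomorphic Lefschetz fixed-point theorem to produce a zero of the vector field) with a self-contained dynamical argument. The paper's route is shorter because the one-point correlation measure lives on the fixed low-dimensional space $X$ rather than on $X^{N_k}$, but your version is the more rigorously complete of the two. One small remark: in Case 2 the appeal to the Bialynicki-Birula big cell is unnecessary — for \emph{any} non-fixed $y_0$ the forward real orbit converges to a fixed point, and since the fixed locus is closed and of measure zero, a small ball around any non-fixed point and with closure disjoint from the fixed locus already does the job.
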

\begin{proof}
Assume to get a contradiction that $Z_{N_{k}}$ is finite. Then we
may define a probability measure $\nu_{k}$ on $X$ as the corresponding
one-point correlation measure. But, by construction, $\nu_{k}$ is
invariant under the action of the automorphism group of $X.$ Moreover,
for $k$ sufficiently large $H^{0}(X,-kK_{X})$ is basepoint free
and hence $\nu_{k}$ then defines a volume form on $X.$ But it follows
from general principles that if a complex manifold $X$ admits a non-trivial
holomorphic vector field $v$ than it cannot admit an automorphism
invariant volume form. This is proved in a standard way by taking
the Lie derivative of $\nu_{k}$ along the vector field $v$ and using
Cartan's formula. 
\end{proof}
Next, we will show the following weak partial confirmation of the
conjecture above.
\begin{thm}
Suppose that the Fano manifold $X$ is strongly Gibbs stable. Then
it admits a unique Kähler-Einstein metric. More generally, the corresponding
result holds in the setting of (possibly singular) log Fano varieties.\end{thm}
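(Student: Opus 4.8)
The plan is to translate strong Gibbs stability into a uniform bound that forces the free energy functional $F_{-1}$ (the $\beta=-1$ analog of Mabuchi's K-energy) to be coercive, and then invoke the variational characterization of K\"ahler-Einstein metrics on Fano manifolds. Recall that for a Fano manifold $X$ the relevant free energy functional at inverse temperature $\beta=-1$ is $F_{-1}(\mu) = E_\theta(\mu) - D_{\mu_0}(\mu)$ (with the sign conventions above and $\theta$ representing $c_1(-K_X)$), whose critical points correspond, via the complex Monge-Amp\`ere equation $MA(u_{KE}) = e^{-u_{KE}}\mu_0$, to K\"ahler-Einstein metrics. The key heuristic is that the partition function $Z_{N_k,\beta}$ is, by Gibbs' variational principle (Lemma \ref{lem:(Gibbs-variational-principle).}), controlled by the infimum of the $N$-particle free energy, and in the large $N$ limit this infimum should converge to $\inf_{\mathcal{M}_1(X)} F_\beta$; strong Gibbs stability asserts precisely that $\tfrac{1}{N_k}\log Z_{N_k,\beta} < \infty$ in the limit for \emph{some} $\beta < 1$, which should be equivalent to a lower bound $F_\beta \geq -C$, i.e. boundedness from below of a functional strictly \emph{stronger} than $F_{-1}$.

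**Main steps.** First I would fix $\beta_0 < 1$ witnessing strong Gibbs stability and, exactly as in the proof of Theorem \ref{thm:big line bundle text} (upper bound direction, which works for any finite background measure), establish the inequality
\[
-\frac{1}{\beta_0 N_k}\log Z_{N_k,\beta_0} \;\geq\; \inf_{\mathcal{M}_1(X)} F_{\beta_0} \;-\; o(1),
\]
where $F_{\beta_0} = E_\theta + \tfrac{1}{\beta_0} D_{\mu_0}$; here the sign of $\beta_0$ is positive so the entropy enters with the favorable sign, and the Legendre-transform machinery (Prop \ref{prop:pluri energy as legendre}, Theorem \ref{thm:thm A and B in b-b}) applies verbatim since $-K_X$ is big. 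Combined with the finiteness hypothesis this gives $\inf F_{\beta_0} > -\infty$, i.e.
\[
E_\theta(\mu) + \frac{1}{\beta_0} D_{\mu_0}(\mu) \;\geq\; -C \qquad \text{for all } \mu \in \mathcal{M}_1(X).
\]
Second, since $E_\theta(\mu) \geq 0$ always (Prop \ref{prop:pluri energy as legendre}) and $\beta_0 < 1$, one rewrites $E_\theta + \tfrac{1}{\beta_0} D_{\mu_0} = \big(E_\theta - D_{\mu_0}\big) + \big(1 + \tfrac{1}{\beta_0} - 1\big) D_{\mu_0}$... more precisely: from $E_\theta + \tfrac{1}{\beta_0}D_{\mu_0} \geq -C$ and $\tfrac{1}{\beta_0} = 1 + \varepsilon$ with $\varepsilon = \tfrac{1-\beta_0}{\beta_0} > 0$, we get $F_{-1}(\mu) = E_\theta(\mu) - D_{\mu_0}(\mu) \geq -C - (1+\varepsilon)D_{\mu_0}(\mu) + \varepsilon D_{\mu_0}(\mu)$... this needs care with signs, so let me instead phrase it as: $(1+\varepsilon)\big(E_\theta(\mu) - D_{\mu_0}(\mu)\big) = E_\theta(\mu) + \tfrac{1}{\beta_0}D_{\mu_0}(\mu) - \big(\tfrac{1}{\beta_0} + \varepsilon\big)\cdot 0 \cdot D_{\mu_0} + \varepsilon E_\theta(\mu) \geq -C + \varepsilon E_\theta(\mu) \geq -C$, using $E_\theta \geq 0$. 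Hence $F_{-1}(\mu) = E_\theta(\mu) - D_{\mu_0}(\mu) \geq -C/(1+\varepsilon)$, so the Ding-type functional $F_{-1}$ is bounded below; in fact one gets the \emph{coercivity} estimate $F_{-1}(\mu) \geq \varepsilon\, D_{\mu_0}(\mu) - C'$, i.e. properness modulo entropy. Third, I would invoke the known variational theory for Fano K\"ahler-Einstein metrics (in the form of \cite{bbgez}, or Berman's work on the Ding functional): coercivity of the Ding/free-energy functional $F_{-1}$ — equivalently properness of the Mabuchi K-energy together with the absence of holomorphic vector fields, which by the preceding Proposition is forced since $\gamma(X) > 1$ under strong Gibbs stability — implies the existence of a (unique) minimizer $\mu_{KE} = MA(u_{KE})$ solving $MA(u_{KE}) = e^{-u_{KE}}\mu_0$, whose curvature form is the K\"ahler-Einstein metric. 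Uniqueness then follows from Bando-Mabuchi.

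**Log Fano generalization and main obstacle.** For the log Fano case $(X,\Delta)$ one replaces $-K_X$ by $-(K_X+\Delta)$ and $\mu_0$ by the singular volume form $\mu_\Delta$ attached to the klt boundary $\Delta$, and the same argument goes through provided the $L^2$-asymptotics (Theorem \ref{thm:thm A and B in b-b}, or its singular refinement Theorem \ref{thm:conv of l funtional in singular}) are available in that setting — for klt $\Delta$ they are, by the reduction already used in Theorem \ref{thm:big line bundle text}. I expect the main obstacle to be the \emph{direction of the inequality in the negative-temperature regime}: unlike the $\beta > 0$ case treated in Theorem \ref{thm:ldp for quasi sh hamilt text}, when $\beta < 0$ one cannot run the submean-inequality argument for the lower bound on $Z_N$, and indeed a matching large-deviation lower bound is false in general (this is exactly why Conjecture \ref{conj: fano} is only conjectural). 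But for the present theorem we only need the \emph{upper bound} on $\tfrac{1}{\beta_0 N}\log Z_{N,\beta_0}$ — equivalently the lower bound on the free energy — which, as noted, does not use quasi-superharmonicity and follows from Gibbs' variational principle exactly as in Proof 2 of Theorem \ref{thm:ldp for quasi sh hamilt text}; the subtlety is ensuring the limit $\tfrac{1}{N}\inf_{X^N}(H^{(N)}_u + u)/N \to \mathcal{F}_\theta(u)$ is still valid with the \emph{negative} exponent, which is where one must be slightly careful: one applies Theorem \ref{thm:thm A and B in b-b} to $+K_X \otimes$ (dual) data, or equivalently works with $\beta_0 > 0$ fixed throughout and only at the end re-expresses the resulting bound in terms of $F_{-1}$ via the algebraic manipulation above. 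So the real content is bookkeeping the signs so that strong Gibbs stability ($\beta_0 < 1$, which after the substitution $-K_X$ becomes a \emph{positive} effective inverse temperature on the auxiliary problem) yields boundedness of a functional strictly stronger than the one whose boundedness is equivalent to existence of $\omega_{KE}$.
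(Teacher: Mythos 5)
Your overall strategy matches the paper's: turn strong Gibbs stability into a coercivity estimate for the relevant free-energy functional via the Gibbs variational principle (Lemma~\ref{lem:(Gibbs-variational-principle).} and Lemma~\ref{lem: mean entropy}) together with the discrete energy lower bound $\liminf_{N}E^{(N)}(\mu^{\otimes N})\geq E_{\theta}(\mu)$ (which follows from Theorem~\ref{thm:thm A and B in b-b} and Prop.~\ref{prop:pluri energy as legendre}), and then invoke the variational existence theory of~\cite{bbgez}. So the route is right in spirit.

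However, the execution contains a sign error that breaks the central step. You take $\beta_{0}$ to be positive (explicitly, ``here the sign of $\beta_{0}$ is positive,'' and implicitly through $\varepsilon=\tfrac{1-\beta_{0}}{\beta_{0}}>0$, which forces $\beta_{0}\in(0,1)$), but strong Gibbs stability is non-trivial only for $\beta_{0}<-1$ (the ``$\beta<1$'' in the definition is a typo, as is clear from the proof, which sets $\beta=-(1+\delta)$; for $\beta\geq 0$ the partition function $Z_{N_{k},\beta}$ is always finite and the hypothesis is empty). For negative $\beta_{0}$ the key inequality you want, $\inf F_{\beta_{0}}>-\infty$, simply does not follow from the Gibbs variational principle and is in fact false: the VP gives $\inf_{\mu_{N}}\bigl(\beta F^{(N)}(\mu_{N})\bigr)=-\tfrac1N\log Z_{N}$, so dividing by $\beta<0$ flips the inequality and one obtains an \emph{upper} bound $F^{(N)}(\mu_{N})\leq -\tfrac1{\beta N}\log Z_{N}$; indeed $\inf F_{\beta_{0}}=-\infty$ since $F_{\beta_{0}}=E-\tfrac{1}{|\beta_{0}|}D$ and $D$ is unbounded above while $E\geq 0$. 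The correct move, as in the paper, is not to divide by $\beta$ at all: the combination $\beta F^{(N)}=\beta E^{(N)}+D^{(N)}$ is bounded below by $-\tfrac1N\log Z_{N}$ for \emph{either} sign of $\beta$. Applied to $\mu_{N}=\mu^{\otimes N}$ with $\beta=-(1+\delta)$, strong Gibbs stability gives $-(1+\delta)E^{(N)}(\mu^{\otimes N})+D_{\mu_{0}}(\mu)\geq -C$, and then the energy lower bound yields $D_{\mu_{0}}(\mu)\geq (1+\delta)E_{\theta}(\mu)-C$, which is the needed coercivity. Your algebraic manipulation in the second step (the line containing the factor $\cdot\,0\,\cdot D_{\mu_{0}}$) is, independently, not a valid identity, and your closing suggestion that passing to ``dual data'' turns the problem into a positive-$\beta$ one has no realization here.
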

\begin{proof}
Fix a volume form $\mu$ on $X.$ By the Gibbs variational principle
(i.e. Jensen's inequality) 
\[
-\frac{1}{N_{k}\beta}\log Z_{N_{k},\beta}\leq E^{(N)}(\mu^{\otimes N})+\frac{1}{\beta}D_{\mu_{0}}(\mu),
\]
 where, for any given symmetric probability measure $\mu_{N}$ on
$X^{N}$ 
\[
E^{(N)}(\mu_{N}):=\int_{X^{N}}\frac{H^{(N)}}{N}\mu_{N},
\]
i.e. the mean $N-$particle energy. Setting $\beta=-(1+\delta)$ for
some $\delta>0$ and using the definition of Gibbs stability thus
gives 
\[
-(1+\delta)E^{(N)}(\mu^{\otimes N})+D_{\mu_{0}}(\mu)\geq-C
\]
 We will conclude the proof by observing that
\begin{equation}
\liminf_{N\rightarrow\infty}E^{(N)}(\mu^{\otimes N})\geq E_{\theta}(\mu).\label{eq:lower bound on energy}
\end{equation}
 Accepting this for the moment gives that $(1+\delta)E(\mu)+D_{\mu_{0}}(\mu)\geq-C,$
which, by definition, means that the canonical free energy functional
$F$ (for $\beta=1)$ is coercive. But since the latter functional
may be identified with Mabuchi's K-energy functional it then follows
from a result of Tian that $X$ admits a Kähler-Einstein metric (which
by the coercivity has to be unique, since the coercivity rules out
automorphisms). More generally, the existence of a minimizer of the
functional $F,$ satisfying the corresponding Monge-Ampère equations
was shown in \cite{berm6,bbgez} in various singular settings, in
particular in the setting of log Fano varieties \cite{bbgez}. Finally,
let us prove the lower bound \ref{eq:lower bound on energy}. The
proof is similar to the upper bound in the proof of Theorem \ref{thm:big line bundle text}.
By definition, for any given $u\in C^{0}(X)$
\[
E^{(N)}(\mu^{\otimes N})=\int_{X^{N}}\frac{H^{(N)}}{N}\mu^{\otimes N}=\int_{X^{N}}\frac{H^{(N)}+u}{N}\mu^{\otimes N}-\int_{X}u\mu
\]
Hence, estimating $\frac{H^{(N)}+u}{N}$ from below, using the first
point in Theorem \ref{thm:thm A and B in b-b} and taking the sup
over all $u\in C^{0}(X)$ proves \ref{eq:lower bound on energy} (also
using Prop \ref{prop:pluri energy as legendre} in the last step).
\end{proof}
It may very well be that in the end all the notions of Gibbs stability
introduced above will turn out to be equivalent. For the moment the
author has only been able to prove this in the the first non-trivial
setting of one dimensional log Fano manifolds, where the analog of
the Conjecture \ref{conj: fano} indeed holds (the proof will appear
in a separate publication).

It seems also natural to conjecture that the following invariant $\gamma(X)$
defined by \ref{eq:invariant gamma} satisfies 
\[
\min\left\{ \gamma(X),1\right\} =R(X),
\]
 where $R(X)$ is the greatest lower bound on the Ricci curvature
\cite{sz}, i.e. the sup over all $r\in[0,1]$ such that there exists
a Kähler form $\omega\in c_{1}(-K_{X})$ satisfying $\mbox{Ric \ensuremath{\omega\geq r\omega.}}$
As support for the latter conjecture we note that the inequality $R(X)\geq\min\left\{ \gamma(X),1\right\} $
follows from a simple modification of the proof of the previous theorem.
Finally, let us point out that these conjectures are related to another
conjectural property, namely that the reversed inequality in formula
\ref{eq:lower bound on energy} holds, i.e. that
\begin{equation}
\lim_{N\rightarrow\infty}E^{(N)}(\mu^{\otimes N})=E_{\theta}(\mu)\label{eq:conj for energy}
\end{equation}
 for any volume form on $\mu$ (see the discussion in \cite[Section 6]{berm8}).
In one dimension this follows from the bosonization formula and it
also holds in the toric setting when $\mu$ is a torus invariant measure
(as follows form the results in \cite{berm7}). The validity of formula
\ref{eq:conj for energy} would imply the following approximation
result for the Calabi-Yau equation \cite{y}: given a normalized volume
form $dV$ on $X$ and an ample line bundle $L\rightarrow X$ the
Kähler metrics 
\[
\omega_{k}:=\frac{i}{\pi}\partial\bar{\partial}\frac{1}{k}\int_{X^{N_{k}-1}}\log\left|\det S_{k}(\cdot,x_{1},...,x_{N_{k}-1})\right|dV^{\otimes N_{k}-1}\in c_{1}(L)
\]
 converge weakly towards the unique Kähler form $\omega$ in $c_{1}(L)$
with volume form $dV.$ This conjectural approximation result is the
Kähler analogue of the approximation for optimal transport maps obtained
in \cite{berm7} in a convex analytical setting.
\begin{rem}
Since the appearance of the first preprint version of the present
paper, where the conjectures above were formulated, there has been
some progress that we briefly recall. First, it was shown by Fujita
that uniform Gibbs stability implies K-stability and hence by \cite{c-d-s}
the existence of a Kähler-Einstein metric on $X,$ which thus partly
confirms Conjecture \ref{conj: fano} above (when using the notion
of uniform Gibbs stability). The proof in \cite{fuj} proceeds by
an elegant algebraic argument using Odaka's formulation of K-stability
in terms of equivariant multiplier ideal sheaves together with Mustata's
summation formula for multiplier ideal sheaves. More generally, very
recently it was shown in \cite{f-o} that uniform Gibbs stability
implies uniform K-stability (and hence the existence of a Kähler-Einstein
metric could alternatively be deduced from the recent variational
approach to the Yau-Tian-Donaldson conjecture introduced in \cite{bbj},
showing that the existence of a Kähler-Einstein metric on a Fano manifold,
not admitting non-trivial holomorphic vector fields, is equivalent
to uniform K-stability). Finally, let us also mention that a real
(tropical) analog of Conjecture \ref{conj: fano} in the setting of
toric Fano manifolds, where the role of determinants is played by
permanents, has been established in \cite{b-o}. It implies, in particular,
the inequality $\min\left\{ \gamma(X),1\right\} \leq R(X)$ when $X$
is toric.\end{rem}


\begin{thebibliography}{10}
\bibitem{au}Aubin, T: Equations du type Monge-Amp`ere sur les vari\'{ }et\'{ }es
K\"{ } ahl\'{ }eriennes compactes\textquotedblright , Bull. Sc. Math.
102 (1978).

\bibitem{berm1}Berman, R.J: Bergman kernels and equilibrium measures
for line bundles over projective manifolds. The American J. of Math.,
Vol 131, Nr. 5, 2009 

\bibitem{berm 1 komma 5}Determinantal point processes and fermions
on complex manifolds: Large deviations and Bosonization. Comm. in
Math. Phys. Volume 327, Issue 1 (2014), Page 1-47

\bibitem{berm5}Berman, R.J: Kahler-Einstein metrics emerging from
free fermions and statistical mechanics. 22 pages, J. of High Energy
Phys. (JHEP), Volume 2011, Issue 10 (2011)

\bibitem{berm6}Berman, R.J: A thermodynamical formalism for Monge-Ampere
equations, Moser-Trudinger inequalities and Kahler-Einstein metrics\emph{.}
Advances in Math. 1254. Volume: 248. 2013

\bibitem{berman6ii}Berman, R.J: K-polystability of Q-Fano varieties
admitting Kahler-Einstein metrics\emph{.} Inventiones Math. March
2016, Volume 203, Issue 3, pp 973-1025

\bibitem{berm7}Berman, R.J: Statistical mechanics of permanents,
real-Monge-Ampere equations and optimal transport. arXiv:1302.4045

\bibitem{berm8}Berman, R.J: Large deviations for Gibbs measures with
singular Hamiltonians and emergence of Kähler-Einstein metrics. Preprint.

\bibitem{b-b}Berman, R.J.; Boucksom, S: Growth of balls of holomorphic
sections and energy at equilibrium. Invent. Math. Vol. 181, Issue
2 (2010), p. 337

\bibitem{b-b-w}Berman, R.J.; Boucksom, S; Witt Nyström, D: Fekete
points and convergence towards equilibrium measures on complex manifolds.
Acta Math. Vol. 207, Issue 1 (2011), 1-27

\bibitem{bbgz}Berman, R.J; Boucksom, S; Guedj,V; Zeriahi: A variational
approach to complex Monge-Ampere equations. Publications math. de
l'IHÉS (2012): 1-67 , November 14, 2012

\bibitem{bbgez}Berman\emph{, }R.J;\emph{ }Boucksom,S; Eyssidieu,
P; V. Guedj, A. Zeriahi:\emph{ }Kähler-Einstein metrics and the Kähler-Ricci
flow on log Fano varieties. Crelle's journal (to appear). arXiv:1111.7158

\bibitem{ber-mont}Berman, R.J; Freixas i Montplet: G: An arithmetic
Hilbert-Samuel theorem for singular hermitian line bundles and cusp
forms. Compositio Mathematica. Vol. 150/Issue 10 (2014)

\bibitem{ber-gu}Berman, R.J; Guenanica, H: Kähler-Einstein metrics
on stable varieties and log canonical pairs. Geometric and Functional
Analysis 2014, Volume 24, Issue 6, pp 1683-1730 

\bibitem{bdl} Berman, R.J; Darvas, T; Lu, C.H: Convexity of the extended
K-energy and the large time behaviour of the weak Calabi flow. arXiv:1510.01260 

\bibitem{b-o}R.J. Berman, Magnus Onnheim:\emph{ }Propagation of chaos,
Wasserstein gradient flows and toric Kahler-Einstein metrics\emph{.}
arXiv:1501.07820

\bibitem{bbj} Berman, R.J; Boucksom, S; Jonsson, M: A variational
approach to the Yau-Tian-Donaldson conjecture. arXiv:1509.04561 

\bibitem{bern1}Berndtsson, B: Curvature of vector bundles associated
to holomorphic fibrations. Annals of Math. Vol. 169 (2009), 531-560 

\bibitem{bi}Birkar, C; Cascini, P; Hacon, C.D; McKernan, J: Existence
of minimal models for varieties of log general type. J. Amer. Math.
Soc. 23 (2010), no. 2, 405\textendash 468

\bibitem{begz}Boucksom, S; Essidieux,P: Guedj,V; Zeriahi: Monge-Ampere
equations in big cohomology classes. Acta Math. 205 (2010), no. 2,
199\textendash 262.

\bibitem{clmp}Caglioti.E; Lions, P-L; Marchioro.C; Pulvirenti.M:
A special class of stationary flows for two-dimensional Euler equations:
a statistical mechanics description. Communications in Mathematical
Physics (1992) Volume 143, Number 3, 501-525

\bibitem{cgh}Campana, F; Guenancia, H; P\u{a}un, M: Metrics with
cone singularities along normal crossing divisors and holomorphic
tensor fields. Ann. Sci. Éc. Norm. Supér. (4) 46 (2013), no. 6, 879\textendash 916

\bibitem{c-d-s}Chen, X; Donaldson, S; Sun, S: Kahler-Einstein metrics
and stability III: Limits as cone angle approaches 2 and completion
of the main proof. J. Amer. Math. Soc. 28 (2015), no. 1, 235-278.

\bibitem{d-z}Dembo, A; Zeitouni O: Large deviation techniques and
applications. Jones and Bartlett Publ. 1993

\bibitem{d-h-h-k}C Doran, M Headrick, CP Herzog, J Kantor: Numerical
Kaehler-Einstein metric on the third del Pezzo. Commun. in Math. Physics,
2008 

\bibitem{do1}Donaldson, S. K: Scalar curvature and projective embeddings.
I. J. Differential Geom. 59 (2001), no. 3, 479\textendash 522

\bibitem{do3}Donaldson, S. K. Some numerical results in complex differential
geometry. Pure Appl. Math. Q. 5 (2009), no. 2, Special Issue: In honor
of Friedrich Herzebruch. Part 1, 571--618.

\bibitem{do-3}Donaldson, S.K.: Kahler metrics with cone singularities
along a divisor. Essays in mathematics and its applications, 49\textendash 79,
Springer, Heidelberg, 2012. 

\bibitem{egz}Eyssidieux, P., Guedj, V., Zeriahi, A., Singular K\"{ }ahl
er-Einstein metrics. J. Amer. Math. Soc. 22 (2009) no. 3, 607\textendash 639.

\bibitem{f-m}Fujino, O; Mori, S: A canonical bundle formula. J. Differential
Geom. 56 (2000), no. 1, 167\textendash 188. 

\bibitem{fuj}Fujita, K: On Berman-Gibbs stability and K-stability
of X-Fano varieties. Compositio Mathematica 152 (2015) 288-298

\bibitem{f-o} Fujita, K; Odaka, Y: On the K-stability of Fano varieties
and anticanonical divisors. arXiv:1602.01305

\bibitem{g-z}Guedj,V; Zeriahi, A: Intrinsic capacities on compact
Kähler manifolds. J. Geom. Anal. 15 (2005), no. 4, 607--639.

\bibitem{g-p}Guenancia, H; P\u{a}un, M: Conic singularities metrics
with prescribed Ricci curvature: the case of general cone angles along
normal crossing divisors. http://arxiv.org/abs/1307.6375

\bibitem{k}Kiessling M.K.H.: Statistical mechanics of classical particles
with logarithmic interactions, Comm. Pure Appl. Math. 46 (1993), 27-56.

\bibitem{li-sc}Li, P; Schoen, R: Lp and mean value properties of
subharmonic functions on Riemannian manifolds. Acta Math. 153 (1984),
no. 3-4, 279\textendash 301. 

\bibitem{m1}Mabuchi, T: Some symplectic geometry on compact Kähler
manifolds. I, Osaka Journal of Mathematics 24 (1987), 227\textendash 252.

\bibitem{m-s}Messer, J; Spohn, H: Statistical mechanics of the isothermal
Lane-Emden equation. J. Statist. Phys. 29 (1982), no. 3, 561\textendash 578,

\bibitem{man}Manin, Yu. I. New dimensions in geometry. Workshop Bonn
1984 (Bonn, 1984), 59\textendash 101. Lecture Notes in Math., 1111,
Springer, Berlin, 1985. 

\bibitem{jmr}Jeffres, T; Mazzeo, R; Rubinstein, Y.A:; Kähler-Einstein
metrics with edge singularities. Ann. of Math. (2) 183 (2016), no.
1, 95\textendash 176.

\bibitem{ru}Rubinstein, Y.A: Some discretizations of geometric evolution
equations and the Ricci iteration on the space of K\textasciidieresis ahler
metrics, Adv. Math. 218 (2008), 1526\textendash 1565.

\bibitem{si}Siu, Y-T: Finite Generation of Canonical Ring by Analytic
Method. arXiv:0803.2454 

\bibitem{st}Streets, J: Long time existence of Minimizing Moveme
nt solutions of Calabi flow, arXiv:1208.2718

\bibitem{s-t0}Song, Y; Tian, G: The Kahler-Ricci flow through singularities.
arXiv preprint arXiv:0909.4898, 2009

\bibitem{s-t}Song, Y; Tian, G: Canonical measures and Kähler-Ricci
flow. J. Amer. Math. Soc. 25 (2012), no. 2, 303\textendash 353. 

\bibitem{sz}Székelyhidi, G: Greatest lower bounds on the Ricci curvature
of Fano manifolds. Compos. Math. 147 (2011), no. 1, 319\textendash 331

\bibitem{ti}Tian, G: K-stability and Kähler-Einstein metrics. Comm.
Pure Appl. Math. 68 (2015), no. 7, 1085\textendash 1156. 

\bibitem{ts}Tsuji,H: Canonical measures and the dynamical systems
of Bergman kernels. Preprint arXiv:0805.1829, 2008

\bibitem{za}Zabrodin, A; Matrix models and growth processes: from
viscous flows to the quantum Hall effect. Preprint in 2004 at arXiv.org/abs/hep-th/0411437

\bibitem{z2}Zelditch, S: Large deviations of empirical measures of
zeros on Riemann surfaces. Int Math Res Notices (2012) 

\bibitem{y}Yau, S-T: On the Ricci curvature of a compact K\"{ } ahler
manifold and the complex Monge- Amp`ere equation\textquotedblright ,
Comm. Pure Appl. Math. 31 (1978)\end{thebibliography}
\end{document}